\theoremstyle{plain}
\newtheorem{theorem}{Theorem}[section]
\newtheorem{corollary}[theorem]{Corollary}
\newtheorem{lemma}[theorem]{Lemma}
\newtheorem{proposition}[theorem]{Proposition}
\theoremstyle{definition}
\newtheorem{definition}[theorem]{Definition}
\newtheorem{example}[theorem]{Example}
\newtheorem{remark}[theorem]{Remark}
\newcommand{\Ext}{\mbox{\rm Ext}}
\newcommand{\Hom}{\mbox{\rm Hom}}
\newcommand{\Tor}{\mbox{\rm Tor}}
\newcommand{\id}{\mbox{\rm id}}
\newcommand{\Ker}{\mbox{\rm Ker}}
\newcommand{\pd}{\mbox{\rm pd}}
\newcommand{\fd}{\mbox{\rm fd}}
\begin{document}
\title{Gorenstein flat modules with respect to duality pairs}
\author{ZHANPING WANG \ \ \ \ GANG YANG}

\footnote[0]{*Supported by National Natural Science Foundation of China (Grant Nos. 11561061, 11561039)}\footnote[0]{Address
correspondence to Zhanping Wang, Department of Mathematics, Northwest Normal University, Lanzhou 730070, P.R. China.}\footnote[0]{E-mail: wangzp@nwnu.edu.cn (Z.P. Wang),
yanggang@mail.lzjt.cn (G. Yang).}

\date{}\maketitle
\hspace{6.3cm}\noindent{\footnotesize {\bf Abstract}
\vspace{0.2cm}

\hspace{-0.75cm} Let $\mathcal{X}$ be a class of left $R$-modules, $\mathcal{Y}$ be a class of right $R$-modules. In this paper, we introduce and study Gorenstein $(\mathcal{X}, \mathcal{Y})$-flat modules as a common generalization of some known modules such as Gorenstein flat modules \cite{EJT93}, Gorenstein $n$-flat modules \cite{SUU14}, Gorenstein $\mathcal{B}$-flat modules \cite{EIP17}, Gorenstein AC-flat modules \cite{BEI17}, $\Omega$-Gorenstein flat modules \cite{EJ00} and so on. We show that the class of all Gorenstein $(\mathcal{X}, \mathcal{Y})$-flat modules have a strong stability. In particular, when $(\mathcal{X}, \mathcal{Y})$ is a perfect (symmetric) duality pair, we give some functorial descriptions of Gorenstein $(\mathcal{X}, \mathcal{Y})$-flat dimension, and construct a hereditary abelian model structure on $R$-Mod whose cofibrant objects are exactly the Gorenstein $(\mathcal{X}, \mathcal{Y})$-flat modules. These results unify the corresponding results of the aforementioned modules.

\vspace{0.2cm}
\noindent{\footnotesize {2010 {\it{Mathematics Subject
Classification}}:}
18G35, 55U15, 13D05, 16E30

\noindent{\footnotesize {{\it{Keywords and phrases}}:}  Gorenstein $(\mathcal{X}, \mathcal{Y})$-flat modules,  Gorenstein $(\mathcal{X}, \mathcal{Y})$-flat dimension, stability, Gorenstein flat model structure, duality pair

\section{Introduction and Preliminaries}
Enochs and coauthors introduced Gorenstein projective, injective and Gorenstein flat modules, and developed Gorenstein homological algebra in \cite{EJ95,EJT93,EJ00}. Later, many scholars further studied these modules and introduced various generalizations of these modules (See, e.g., \cite{Ben09,BGH14,BK12,Gil17,Hol04,Hu13}). For example, Bravo, Estrada and Iacob defined Gorenstein AC-flat modules in \cite{BEI17}, Estrada, Iacob and P\'{e}res studied Gorenstein $\mathcal{B}$-flat modules in \cite{EIP17}, where $\mathcal{B}$ is a class of right $R$-modules.

In \cite{Gil17}, Gillespie constructed a hereditary abelian model structure, the Gorenstein flat model structure, over a right coherent ring $R$, in which the cofibrant objects are precisely the Gorenstein flat modules. In \cite{EIP17}, Estrada, Iacob and P\'{e}res studied relative Gorenstein flat model structure on the categories of left $R$-modules and complexes.

Let $\mathcal{X}$ be a class of left $R$-modules, $\mathcal{Y}$ be a class of right $R$-modules. In section $2$ of this paper, we define and study Gorenstein $(\mathcal{X}, \mathcal{Y})$-flat modules. A left $R$-module $M$ is called Gorenstein $(\mathcal{X}, \mathcal{Y})$-flat module, if there exists an exact sequence of left $R$-modules in $\mathcal{X}$,
$$\cdots\rightarrow X_{1}\rightarrow X_{0}\rightarrow X^{0}\rightarrow X^{1}\rightarrow\cdots$$
such that $M\cong \Ker(X^{0}\rightarrow X^{1})$ and $Y\otimes_{R}-$ leaves the sequence exact whenever $Y$ in $\mathcal{Y}$. For different choices of $\mathcal{X}, \mathcal{Y}$, the class $\mathcal{GF}_{\mathcal{(X,Y)}}(R)$ of all Gorenstein $(\mathcal{X}, \mathcal{Y})$-flat modules encompasses all of the aforementioned modules, and some results existing in the literature for the modules above can be obtained as particular cases of the results on Gorenstein
$(\mathcal{X}, \mathcal{Y})$-flat modules.

Section $3$ is devoted to a hereditary abelian model structure which is related to Gorenstein $(\mathcal{X}, \mathcal{Y})$-flat modules, where $(\mathcal{X}, \mathcal{Y})$ is a duality pair.

Next we collect all the background material that will be necessary in the sequel. For unexplained concepts and notations, we refer the reader to \cite{EJ00,Gil16,Hov99}.

 \textbf{Cotorsion pairs.} Let $\mathcal{A}$ be a class of left $R$-modules. We define $\mathcal{A}^{\bot}=\{M~|~\Ext_{R}^{1}(A, M)=0 ~\text{for all}~ A\in \mathcal{A}\}$, and $^{\bot}\mathcal{A}=\{M~|~\Ext_{R}^{1}(M, A)=0 ~\text{for all}~ A\in \mathcal{A}\}$. A pair ($\mathcal{A}, \mathcal{B}$) of classes of left $R$-modules is called a cotorsion pair if $\mathcal{A}^{\bot}=\mathcal{B}$ and $^{\bot}\mathcal{B}=\mathcal{A}$.

 Let $\mathcal{A}$ be a class of left $R$-modules. A morphism $f: A\rightarrow M$ is called an $\mathcal{A}$-precover if $A\in \mathcal{A}$ and the induced morphism $\Hom_{R}(A', f): \Hom_{R}(A', A)\rightarrow \Hom_{R}(A', M)$ is an epimorphism for each $A'\in \mathcal{A}$. An $\mathcal{A}$-precover $f: A\rightarrow M$ is called an $\mathcal{A}$-cover if every endomorphism $g: A\rightarrow A$ such that $fg=f$ is an automorphism of $A$. The class $\mathcal{A}$ is called precovering (resp. covering) if every module $M$ has an $\mathcal{A}$-precover (resp. $\mathcal{A}$-cover). Dually we have the definitions of an $\mathcal{A}$-(pre)envelope and an $\mathcal{A}$-(pre)enveloping class.

 A class $\mathcal{A}$ of left $R$-modules is projectively resolving if it is closed under extensions and kernels of epimorphisms, and it contains the class of projective left $R$-modules. Dually we have the notion of injectively coresolving class.

 A cotorsion pair ($\mathcal{A}, \mathcal{B}$) is said to be hereditary if $\mathcal{A}$ is projectively resolving, or equivalently, if $\mathcal{B}$ is injectively coresolving. A cotorsion pair ($\mathcal{A}, \mathcal{B}$) is said to be complete if for any module $M$ there exist exact sequences $0\rightarrow M\rightarrow B\rightarrow A\rightarrow 0$ and $0\rightarrow B'\rightarrow A'\rightarrow M\rightarrow 0$ with $A, A'\in \mathcal{A}$ and $B, B'\in \mathcal{B}$. A cotorsion pair ($\mathcal{A}, \mathcal{B}$) is said to be perfect if every module has an $\mathcal{A}$-cover and a $\mathcal{B}$-envelope. It is well known that a perfect cotorsion pair is complete, but the converse may be false in general.

\textbf{Duality pairs.} In \cite[Definition 2.1]{HJ09}, a duality pair over a ring $R$ is a pair ($\mathcal{X}, \mathcal{Y}$), of classes of $R$-modules, satisfying (1) $X\in \mathcal{X}$ if and only if $X^{+}\in \mathcal{Y}$, and (2) $\mathcal{Y}$ is closed under direct summands and finite direct sums, where $X^{+}=\Hom_{\mathbb{Z}}(M, \mathbb{Q}/\mathbb{Z})$. A duality pair ($\mathcal{X}, \mathcal{Y}$) is called perfect if $\mathcal{X}$ contains the module $R$, and is closed under direct sums and extensions.

\begin{lemma}(\cite[Theorem 3.1]{HJ09})\label{lem1.1} Let $(\mathcal{X},\mathcal{Y})$ be a duality pair. Then the following hold:

$\mathrm{(1)}$ $\mathcal{X}$ is closed under pure submodules, pure quotients, and pure extensions.

$\mathrm{(2)}$ If $(\mathcal{X},\mathcal{Y})$ is perfect, then $(\mathcal{X},\mathcal{X}^{\bot})$ is a perfect cotorsion pair.
\end{lemma}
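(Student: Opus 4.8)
The plan is to treat the two parts separately: part (1) rests on the classical duality between purity and splitting under the character functor $(-)^{+}=\Hom_{\mathbb{Z}}(-,\mathbb{Q}/\mathbb{Z})$, while part (2) rests on the machinery of cotorsion pairs cogenerated by a set together with Enochs's criterion for passing from completeness to perfectness.

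For part (1), I would start from the standard fact that a short exact sequence $0\rightarrow A\rightarrow B\rightarrow C\rightarrow 0$ of $R$-modules is pure exact if and only if its character dual $0\rightarrow C^{+}\rightarrow B^{+}\rightarrow A^{+}\rightarrow 0$ is split exact. Granting this, closure under pure submodules and pure quotients is immediate: if $B\in\mathcal{X}$ then $B^{+}\in\mathcal{Y}$, and the splitting exhibits $A^{+}$ and $C^{+}$ as direct summands of $B^{+}$; since $\mathcal{Y}$ is closed under direct summands we get $A^{+},C^{+}\in\mathcal{Y}$, whence $A,C\in\mathcal{X}$ by the defining property of the duality pair. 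For pure extensions, if $A,C\in\mathcal{X}$ then $A^{+},C^{+}\in\mathcal{Y}$, and the splitting gives $B^{+}\cong A^{+}\oplus C^{+}$; closure of $\mathcal{Y}$ under finite direct sums then yields $B^{+}\in\mathcal{Y}$, hence $B\in\mathcal{X}$.

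For part (2), assume the pair is perfect, so $R\in\mathcal{X}$ and $\mathcal{X}$ is closed under direct sums and extensions. First I would record that $\mathcal{X}$ is closed under direct limits: every direct limit of a directed system is a pure quotient of the direct sum of that system, so this follows by combining closure under direct sums (a perfect-pair hypothesis) with closure under pure quotients from part (1). The crucial step is to show $\mathcal{X}$ is a Kaplansky class: fixing an infinite cardinal $\kappa\geq|R|+\aleph_{0}$, every subset of bounded cardinality of an $X\in\mathcal{X}$ sits inside a pure submodule $P$ with $|P|\leq\kappa$, and since $P$ is pure in $X$, part (1) gives both $P\in\mathcal{X}$ and $X/P\in\mathcal{X}$. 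Taking a set $\mathcal{S}$ of representatives for the $\leq\kappa$-generated members of $\mathcal{X}$, the Kaplansky property lets one build, for each $X\in\mathcal{X}$, a continuous filtration by pure submodules with successive quotients in $\mathcal{S}$, so Eklof's lemma yields $\mathcal{S}^{\bot}=\mathcal{X}^{\bot}$. Thus $(\mathcal{X},\mathcal{X}^{\bot})$ is cogenerated by the set $\mathcal{S}$, and the Eklof--Trlifaj theorem makes it complete, identifying ${}^{\bot}(\mathcal{X}^{\bot})$ with the direct summands of $\mathcal{S}$-filtered modules. Because $\mathcal{X}$ is closed under extensions, direct limits (hence transfinite extensions) and direct summands (summands being pure), this gives ${}^{\bot}(\mathcal{X}^{\bot})=\mathcal{X}$, confirming a complete cotorsion pair; finally, since the left class $\mathcal{X}$ is closed under direct limits, Enochs's theorem upgrades completeness to perfectness.

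I expect the main obstacle to be the verification that $\mathcal{X}$ is a Kaplansky class: producing, for each $X\in\mathcal{X}$ and each bounded subset, a pure submodule of controlled cardinality whose quotient also lies in $\mathcal{X}$, and pinning down the cardinal $\kappa$ governing the cogenerating set $\mathcal{S}$. This is exactly where the interplay between purity (which supplies small pure submodules) and part (1) (which keeps both the submodule and the quotient inside $\mathcal{X}$) is indispensable. The remaining ingredients --- the duality between purity and split character sequences, and the Eklof--Trlifaj and Enochs theorems --- are standard and enter only as black boxes.
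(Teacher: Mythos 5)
The paper does not actually prove this lemma; it quotes it verbatim from Holm--J{\o}rgensen \cite[Theorem 3.1]{HJ09}. Your proof is correct and follows essentially the same route as that cited source: part (1) via the standard fact that a short exact sequence is pure exact if and only if its character dual splits, combined with the closure of $\mathcal{Y}$ under direct summands and finite direct sums; part (2) by showing that $\mathcal{X}$ is a Kaplansky class (small pure submodules exist, and part (1) keeps both the submodule and the quotient in $\mathcal{X}$), is closed under direct limits (direct limits are pure quotients of direct sums), contains the projectives, and then invoking the Enochs--L\'{o}pez-Ramos machinery --- precisely the theorem this paper itself records later as Lemma~\ref{ER02}. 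The only difference is cosmetic: where Holm--J{\o}rgensen cite that result as a black box, you unpack it into its Eklof--Trlifaj ingredients (cogeneration by a set of small modules via filtrations by pure submodules, identification of ${}^{\bot}(\mathcal{X}^{\bot})$ with $\mathcal{X}$ using closure under transfinite extensions and summands, and Enochs's direct-limit criterion to upgrade completeness to perfectness); your sketch of those steps is accurate, including the correct observation that $R\in\mathcal{X}$ and closure under extensions, direct limits and pure submodules are exactly what make the identification ${}^{\bot}(\mathcal{X}^{\bot})=\mathcal{X}$ go through.
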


\begin{lemma}(\cite[Proposition 2.3]{Gil17})\label{lem1.2} If $(\mathcal{X},\mathcal{Y})$ is a perfect duality pair, then $\mathcal{X}$ contains all projective modules, in fact, it contains all flat modules. And, the class $\mathcal{Y}$ contains all injective modules.
\end{lemma}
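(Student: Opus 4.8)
The plan is to establish the three claims by treating the membership of flat modules in $\mathcal{X}$ and of injective modules in $\mathcal{Y}$ as dual to each other through the character functor $(-)^{+}=\Hom_{\mathbb{Z}}(-,\mathbb{Q}/\mathbb{Z})$. First I would record that, by perfection, $R\in\mathcal{X}$ and $\mathcal{X}$ is closed under arbitrary direct sums, so every free left $R$-module $R^{(I)}$ lies in $\mathcal{X}$. To pass from free to flat modules, I would use the elementary observation that any short exact sequence $0\to K\to F\to M\to 0$ with $M$ flat is pure exact: applying $N\otimes_{R}-$ for an arbitrary right $R$-module $N$ gives $\Tor_{1}^{R}(N,M)=0$, whence $N\otimes_{R}K\to N\otimes_{R}F$ is a monomorphism. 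Choosing $F$ free exhibits every flat module $M$ as a pure quotient of a module in $\mathcal{X}$, so $M\in\mathcal{X}$ by Lemma~\ref{lem1.1}(1). Since projective modules are flat, this settles the first two assertions at once.

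For the class $\mathcal{Y}$, the key point is that $R^{+}$ is an injective cogenerator of the category of right $R$-modules (it is injective since $R$ is flat, and it is a cogenerator because every right module $N$ embeds purely in $N^{++}$, which in turn embeds into a product $(R^{+})^{J}$ obtained by dualizing a free presentation of the left module $N^{+}$). Since $\mathcal{X}$ is closed under direct sums, $R^{(I)}\in\mathcal{X}$ for every index set $I$, and the duality-pair condition then forces $(R^{(I)})^{+}\in\mathcal{Y}$; but $(R^{(I)})^{+}\cong (R^{+})^{I}$, so arbitrary products of copies of $R^{+}$ belong to $\mathcal{Y}$. Now let $E$ be an injective right $R$-module. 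Because $R^{+}$ is a cogenerator, $E$ embeds into some product $(R^{+})^{I}$; because $E$ is injective, this monomorphism splits, so $E$ is a direct summand of $(R^{+})^{I}\in\mathcal{Y}$. As $\mathcal{Y}$ is closed under direct summands, $E\in\mathcal{Y}$, which is the third assertion.

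I expect the main obstacle to lie in the $\mathcal{Y}$-step rather than the $\mathcal{X}$-step. The definition of a duality pair only guarantees that $\mathcal{Y}$ is closed under direct summands and \emph{finite} direct sums, so one cannot hope to place products into $\mathcal{Y}$ directly; the crux is to transfer the \emph{arbitrary} direct-sum closure of $\mathcal{X}$ across the character functor, exploiting that $(-)^{+}$ converts direct sums into products, and only then to invoke closure under summands. The remaining care is in the two standard facts being leaned on — that $R^{+}$ is an injective cogenerator of the category of right $R$-modules, and that an injective module splits off from any module containing it — both of which are routine but should be cited or checked explicitly.
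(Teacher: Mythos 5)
Your proof is correct. Note that the paper itself gives no argument for this lemma---it is quoted verbatim from \cite[Proposition 2.3]{Gil17}---so the relevant comparison is with Gillespie's proof. For the projective/flat half you follow exactly the same route as the source: perfection puts $R$ and hence all free modules into $\mathcal{X}$, every flat module is a pure quotient of a free module (your $\Tor$-vanishing argument for purity is the standard one), and Lemma \ref{lem1.1}(1) closes $\mathcal{X}$ under pure quotients. For the injective half your route genuinely differs. Gillespie dualizes the injective module $E$ itself: $E$ injective implies $E^{+}$ is flat (via the isomorphism $\Tor_{1}^{R}(B,E^{+})\cong \Ext_{R}^{1}(B,E)^{+}$ for finitely presented $B$), hence $E^{+}\in\mathcal{X}$ by the flat part, hence $E^{++}\in\mathcal{Y}$ by the duality-pair axiom; since $E$ is injective the canonical monomorphism $E\rightarrow E^{++}$ splits, and closure of $\mathcal{Y}$ under direct summands finishes. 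You instead transfer the direct-sum closure of $\mathcal{X}$ across $(-)^{+}$ to conclude that every product $(R^{+})^{I}\cong (R^{(I)})^{+}$ lies in $\mathcal{Y}$, and then use that $R^{+}$ is an injective cogenerator of right $R$-modules so that any injective $E$ splits off such a product. Both arguments are sound and of comparable length; yours trades the Ext--Tor duality behind ``$E$ injective $\Rightarrow E^{+}$ flat'' for the (equally standard) cogenerator property of $R^{+}$, and it has the mild structural advantage of invoking the first half of the lemma only for free modules, rather than for the flat module $E^{+}$. Your closing remark about where the subtlety lies is also on target: one cannot put products into $\mathcal{Y}$ directly from the axioms (only finite sums and summands are given), so routing the arbitrary coproduct closure of $\mathcal{X}$ through the character functor is exactly the step that makes the argument work.
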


Recall that in \cite[Definition 2.4]{Gil17} a duality pair $(\mathcal{X},\mathcal{Y})$ is called symmetric if both $(\mathcal{X},\mathcal{Y})$ and $(\mathcal{Y},\mathcal{X})$ are duality pairs. Several examples of perfect and symmetric duality pairs are given in  \cite{ BP17, EIP17, Gil17, HJ09}.

 \begin{example}

 (1) $(\mathcal{F}_{n},\mathcal{I}_{n})$ is a perfect duality pair, where $\mathcal{F}_{n}$ is the class of all modules $M$ with $\fd(M)\leqslant n$, $\mathcal{I}_{n}$ is the class of all modules $N$ with $\id(N)\leqslant n$. Over a left noetherian ring $R$, $(\mathcal{F}_{n},\mathcal{I}_{n})$ is a perfect and symmetric duality pair.

 (2) $(\mathcal{F}_{n},\mathcal{FI}_{n})$ is a perfect duality pair, where $\mathcal{FI}_{n}$ is the class of all modules $N$ with FP-$\id(N)\leqslant n$. Over a right coherent ring $R$, $(\mathcal{F}_{n},\mathcal{FI}_{n})$ is a perfect and symmetric duality pair.

 (3) In \cite{BP17}, a right $R$-module $F$ is said to be of type $FP_{n}$ if it has a projective resolution $P_{n}\rightarrow \cdots\rightarrow P_{2}\rightarrow P_{1}\rightarrow P_{0}\rightarrow F \rightarrow 0$ with each $P_{i}$ finitely generated. A right $R$-module $M$ is called $FP_n$-injective if $\Ext_{R}^{1}(F, M)=0$ for all $R$-modules $F$ of type $FP_{n}$, a left $R$-module $N$ is called $FP_n$-flat if $\Tor^{R}_{1}(F, N)=0$ for all $R$-modules $F$ of type $FP_{n}$. Let $\mathcal{FP}_{n}$-Flat denote the class of all $FP_n$-flat left $R$-modules, and $\mathcal{FP}_{n}$-Inj denote the class of all $FP_n$-injective right $R$-modules. Then for all $n\geqslant2$,($\mathcal{FP}_{n}$-Flat, $\mathcal{FP}_{n}$-Inj) is a perfect and symmetric duality pair by \cite[Corollary 3.7]{BP17}.

 (4) Recall that in \cite{M07} a ring $R$ is said to be right min-coherent in case every simple right ideal is finitely presented. A left $R$-module $M$ is called min-flat if $\Tor_{1}^{R}(R/I, M)=0$ for any simple right ideal $I$. A right $R$-module $M$ is called min-injective if $\Ext^{1}_{R}(R/I, M)=0$ for any simple right ideal $I$.  Let $\mathcal{MF}$ denote the class of all min-flat left $R$-modules, and $\mathcal{MI}$ denote the class of all min-injective right $R$-modules. Then ($\mathcal{MF}$, $\mathcal{MI}$) is a perfect duality pair by \cite[Lemma 3.2]{M07}. Over a right min-coherent ring $R$, ($\mathcal{MF}$, $\mathcal{MI}$) is a perfect and symmetric duality pair by \cite[Theorem  4.5]{M07}.
\end{example}
Throughout this paper, $R$ is an associative ring with an identity, the modules are unital. $R$-Mod denotes the category of all left $R$-modules. Let $\mathcal{P}$ and $\mathcal{F}$ be the classes of all projective and flat left $R$-modules, respectively, and $\mathcal{I}$ be the class of all injective right $R$-modules. For an $R$-module $M$, $M^{+}$ denotes the character module $\Hom_{\mathbb{Z}}(M, \mathbb{Q}/\mathbb{Z})$.

\section{Gorenstein $(\mathcal{X}, \mathcal{Y})$-flat modules}
Let $\mathcal{X}$ be a class of left $R$-modules, $\mathcal{Y}$ be a class of right $R$-modules. In the section, we introduce and study Gorenstein $(\mathcal{X}, \mathcal{Y})$-flat modules, and show that the class of all Gorenstein $(\mathcal{X}, \mathcal{Y})$-flat modules have a strong stability. In particular, when $(\mathcal{X}, \mathcal{Y})$ is a perfect (symmetric) duality pair, we give some functorial descriptions of Gorenstein $(\mathcal{X}, \mathcal{Y})$-flat dimension.

\begin{definition} A left $R$-module $M$ is called Gorenstein $(\mathcal{X}, \mathcal{Y})$-flat, if there exists an exact sequence of left $R$-modules in $\mathcal{X}$,
$$\cdots\rightarrow X_{1}\rightarrow X_{0}\rightarrow X^{0}\rightarrow X^{1}\rightarrow\cdots$$
such that $M\cong \Ker(X^{0}\rightarrow X^{1})$ and $Y\otimes_{R}-$ leaves the sequence exact whenever $Y$ in $\mathcal{Y}$.
\end{definition}

Use $\mathcal{GF}_{\mathcal{(X,Y)}}(R)$ to denote the class of all Gorenstein $(\mathcal{X}, \mathcal{Y})$-flat modules.

\begin{remark}\label{rem2.1}  (1) It is clear that each module in $\mathcal{X}$ is in $\mathcal{GF}_{\mathcal{(X,Y)}}(R)$.

(2) If
$$\mathbb{X}: \cdots\rightarrow X_{1}\rightarrow X_{0}\rightarrow X^{0}\rightarrow X^{1}\rightarrow\cdots$$
is a $\mathcal{Y}\otimes_{R}-$ exact exact sequence of modules in $\mathcal{X}$, then by symmetic, all the kernels, the images, and the cokernels of $\mathbb{X}$ are in $\mathcal{GF}_{\mathcal{(X,Y)}}(R)$.
\end{remark}

\begin{example}\label{ex2.1}

(1) Obviously, if $\mathcal{X}=\mathcal{F}$, $\mathcal{Y}=\mathcal{I}$, then Gorenstein $(\mathcal{X}, \mathcal{Y})$-flat modules are exactly Gorenstein flat modules in \cite{EJT93}. If $\mathcal{X}=\mathcal{F}$, $\mathcal{Y}$ is the class of all FP-injective modules, then Gorenstein $(\mathcal{X}, \mathcal{Y})$-flat modules are also Gorenstein flat modules by \cite[Lemma 2.8]{MD08}.

(2) If $\mathcal{X}=\mathcal{F}$, then Gorenstein $(\mathcal{X}, \mathcal{Y})$-flat modules are exactly Gorenstein $\mathcal{Y}$-flat modules $\mathcal{GF}_{\mathcal{Y}}(R)$ in \cite{EIP17}. If $\mathcal{Y}$ is the class of all absolutely clean modules, then Gorenstein $\mathcal{Y}$-flat modules are precisely Gorenstein AC-flat modules in \cite{BEI17}.

(3)  Let $\mathcal{F}^{n}=\{M~|~\Tor_{1}^{R}(N,~M)=0~\text{for all finitely presented modules}~ N ~\text{with} ~\pd N\leqslant n \}$, $\mathcal{FI}^{n}=\{M~|~\Ext^{1}_{R}(N,~M)=0~\text{for all finitely presented modules}~ N ~\text{with} ~\pd N\leqslant n \}$. Then Gorenstein $(\mathcal{F}^{n}, \mathcal{FI}^{n})$-flat modules are exactly Gorenstein $n$-flat modules in \cite{SUU14}.

(4) Let $R$ be a commutative ring, $C$ be a semiduality module, and let $\mathcal{F}_{C}=\{C\otimes_{R}F~|~F ~\text{is a flat module}\}$, $\mathcal{I}_{C}=\{\Hom_{R}(C, I)~|~I~ \text{is an injective module}\}$. Then Gorenstein $(\mathcal{F}_{C}, \mathcal{I}_{C})$-flat modules coincide with Gorenstein $C$-flat modules in \cite{WGM17}. If $R$ is a Cohen-Macaulay ring of finite Krull dimension admitting a dualizing module $\Omega$, then Gorenstein $(\mathcal{F}_{\Omega}, \mathcal{I}_{\Omega})$-flat modules are precisely $\Omega$-Gorenstein flat modules in \cite[Definition 12.5.14]{EJ00}.
\end{example}

\begin{remark}\label{rem2.2} (1) If $\mathcal{X}$ is closed under direct sums, then
$\mathcal{GF}_{\mathcal{(X,Y)}}(R)$ is closed under direct sums.

(2) If $\mathcal{X}_{1}\subseteq\mathcal{X}_{2}$, then $\mathcal{GF}_{(\mathcal{X}_{1},\mathcal{Y})}(R)\subseteq
\mathcal{GF}_{(\mathcal{X}_{2},\mathcal{Y})}(R)$. If $\mathcal{Y}_{1}\subseteq\mathcal{Y}_{2}$, then $\mathcal{GF}_{(\mathcal{X},\mathcal{Y}_{2})}(R)
\subseteq\mathcal{GF}_{(\mathcal{X},\mathcal{Y}_{1})}(R)$.
\end{remark}

Recall that an exact sequence $\cdots\rightarrow X_{1}\rightarrow X_{0}\rightarrow M\rightarrow 0$ in $R$-Mod with each $X_{i}\in \mathcal{X}$ is said to be an $\mathcal{X}$-resolution of $M$. An $\mathcal{X}$-coresolution of $M$ is defined dually. An exact sequence in $R$-Mod is called $\mathcal{Y}\otimes_{R}-$ exact if it remains still exact after applying the functor $Y\otimes_{R}-$ for all $Y\in \mathcal{Y}$.

\begin{lemma}\label{lem2.1} Assume that $\mathcal{GF}_{\mathcal{(X,Y)}}(R)$ is closed under extensions. Then a left $R$-module $M$ has a $\mathcal{Y}\otimes_{R}-$ exact $\mathcal{X}$-resolution if and only if it has a $\mathcal{Y}\otimes_{R}-$ exact $\mathcal{GF}_{\mathcal{(X,Y)}}(R)$-resolution.
\end{lemma}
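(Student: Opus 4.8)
The plan is to dispose of the forward implication at once and put all the work into the converse. If $\cdots\to X_1\to X_0\to M\to0$ is a $\mathcal{Y}\otimes_R-$ exact $\mathcal{X}$-resolution, then by Remark~\ref{rem2.1}(1) every $X_i$ lies in $\mathcal{GF}_{(\mathcal{X},\mathcal{Y})}(R)$, so the very same sequence is already a $\mathcal{Y}\otimes_R-$ exact $\mathcal{GF}_{(\mathcal{X},\mathcal{Y})}(R)$-resolution; no hypothesis is used. Before attacking the converse I would isolate the tool that governs properness of syzygies: for any exact complex $A_\bullet$ with cycles $\Z_n$ and any $Y\in\mathcal{Y}$, factoring each differential through its image and using right exactness of $Y\otimes_R-$ gives a natural isomorphism $\h_n(Y\otimes_R A_\bullet)\cong\Ker(Y\otimes_R \Z_{n-1}\to Y\otimes_R A_{n-1})$. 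Hence a resolution is $\mathcal{Y}\otimes_R-$ exact if and only if each induced short exact sequence $0\to \Z_n\to A_n\to \Z_{n-1}\to0$ on syzygies is $\mathcal{Y}\otimes_R-$ exact, and conversely splicing $\mathcal{Y}\otimes_R-$ exact short exact sequences gives a $\mathcal{Y}\otimes_R-$ exact resolution. This ``syzygy lemma'' lets me move freely between a proper resolution and its short exact pieces, in particular for the complete $\mathcal{X}$-resolution witnessing membership in $\mathcal{GF}_{(\mathcal{X},\mathcal{Y})}(R)$.

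The converse I would reduce to the one-step claim $(\ast)$: if $M$ has a $\mathcal{Y}\otimes_R-$ exact $\mathcal{GF}_{(\mathcal{X},\mathcal{Y})}(R)$-resolution, then there is a $\mathcal{Y}\otimes_R-$ exact short exact sequence $0\to N\to X\to M\to0$ with $X\in\mathcal{X}$ and with $N$ again admitting a $\mathcal{Y}\otimes_R-$ exact $\mathcal{GF}_{(\mathcal{X},\mathcal{Y})}(R)$-resolution. Granting $(\ast)$, I iterate it on $M$, then on $N$, and so on, and splice the resulting short exact sequences; each is proper, so by the syzygy lemma the splice is a $\mathcal{Y}\otimes_R-$ exact $\mathcal{X}$-resolution of $M$.

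To prove $(\ast)$, write $\cdots\to G_1\to G_0\to M\to0$ for the given resolution with syzygies $K_i$; by the syzygy lemma every $0\to K_i\to G_i\to K_{i-1}\to0$ is proper. Since $G_0\in\mathcal{GF}_{(\mathcal{X},\mathcal{Y})}(R)$, the left half of its complete $\mathcal{X}$-resolution supplies a proper $0\to L_0\to X_0\to G_0\to0$ with $X_0\in\mathcal{X}$ and, by Remark~\ref{rem2.1}(2), $L_0\in\mathcal{GF}_{(\mathcal{X},\mathcal{Y})}(R)$. Composing $X_0\twoheadrightarrow G_0\twoheadrightarrow M$ and setting $N=\Ker(X_0\to M)$, a diagram chase (a two-out-of-three statement for $\mathcal{Y}\otimes_R-$ exactness, applied to the two proper sequences $0\to L_0\to X_0\to G_0\to0$ and $0\to K_0\to G_0\to M\to0$) shows that both $0\to N\to X_0\to M\to0$ and $0\to L_0\to N\to K_0\to0$ are proper. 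It remains to resolve $N$. Here I would deliberately avoid the horseshoe lemma and instead form the pullback $E=N\times_{K_0}G_1$ of $N\twoheadrightarrow K_0$ along $G_1\twoheadrightarrow K_0$: the sequence $0\to L_0\to E\to G_1\to0$ together with closure under extensions gives $E\in\mathcal{GF}_{(\mathcal{X},\mathcal{Y})}(R)$, while a $3\times3$ chase shows $0\to K_1\to E\to N\to0$ is proper. Splicing this with the proper tail $\cdots\to G_2\to K_1\to0$ produces the required proper $\mathcal{GF}_{(\mathcal{X},\mathcal{Y})}(R)$-resolution of $N$, completing $(\ast)$.

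I expect the main obstacle to be entirely in the properness bookkeeping rather than in the module-theoretic content. The decisive point is that the usual horseshoe and comparison machinery is $\Hom$-based and does \emph{not} transport to $\mathcal{Y}\otimes_R-$ exact (tensor-proper) resolutions, since chain maps between such resolutions need not lift; the pullback construction, combined with closure of $\mathcal{GF}_{(\mathcal{X},\mathcal{Y})}(R)$ under extensions, is precisely the device that replaces it by absorbing the correction term $L_0$ into the resolving class. The accompanying composition and $3\times3$ diagram chases verifying that $\mathcal{Y}\otimes_R-$ exactness propagates are routine but must be carried out carefully, as they are where closure under extensions and the syzygy lemma are actually invoked.
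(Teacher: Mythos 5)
Your proof is correct and takes essentially the same approach as the paper: the paper's argument constructs exactly your $N$ (the kernel of $X_0\to M$, obtained via a pullback from a proper sequence $0\to L_0\to X_0\to G_0\to 0$) and exactly your $E$ (the pullback of $G_1\twoheadrightarrow K_0$ along $N\twoheadrightarrow K_0$), invokes closure under extensions at the same point to conclude $E\in\mathcal{GF}_{(\mathcal{X},\mathcal{Y})}(R)$, and then iterates precisely as in your claim $(\ast)$. The only difference is presentational: you make the syzygy-splicing lemma and the one-step reduction explicit, whereas the paper leaves them implicit in its two pullback diagrams and the phrase ``repeating the preceding process.''
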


\begin{proof}It is enough to show that the $\lq\lq$if~" part. Let $0\rightarrow N\rightarrow G_{0}\rightarrow M\rightarrow 0$ be an $\mathcal {Y}\otimes_{R}-$ exact exact sequence with $G_{0} \in \mathcal{GF}_{\mathcal{(X,Y)}}(R)$ and $N$ having a $\mathcal{Y}\otimes_{R}-$ exact $\mathcal{GF}_{\mathcal{(X,Y)}}(R)$-resolution. Then we have the following pullback diagram
 $$\xymatrix{
            &  0 \ar[d]_{ }                            & 0   \ar[d]_{ }          &               &  \\
&             G^{'} \ar[d]_{ }  \ar@{=}[r]^{}             &G^{'} \ar[d]_{}&                &  \\
0 \ar[r]^{} & H \ar[d]_{ }  \ar[r]^{ } &X_{0}  \ar[d]_{ } \ar[r]^{ } & M  \ar[r]^{ }\ar@{=}[d]^{}  &0 \\
0 \ar[r]^{} & N  \ar[d]_{ }           \ar[r]^{ } &G_{0}\ar[d]_{} \ar[r]^{} & M \ar[r]^{ } &0 \\
            &0                            &0             & &   }$$
with~$X_{0}\in \mathcal {X}$, $G^{'}\in \mathcal{GF}_{\mathcal{(X,Y)}}(R)$. Since the bottom row and the middle column are $\mathcal {Y}\otimes_{R}-$ exact, so is the middle row by Snake lemma. Note that there is an $\mathcal {Y}\otimes_{R}-$ exact exact sequence $0\rightarrow K\rightarrow G_{1}\rightarrow N\rightarrow 0 $\ such that $K$ has a $\mathcal{Y}\otimes_{R}-$ exact $\mathcal{GF}_{\mathcal{(X,Y)}}(R)$-resolution and $G_{1} \in \mathcal{GF}_{\mathcal{(X,Y)}}(R)$. Consider the following pullback diagram
$$\xymatrix{
            &                            & 0   \ar[d]_{ }          & 0 \ar[d]_{ }                &  \\
&                            &K \ar[d]_{} \ar@{=}[r]^{}& K \ar[d]_{ }                &  \\
0 \ar[r]^{} & G^{'} \ar@{=}[d]  \ar[r]^{ } &L  \ar[d]_{ } \ar[r]^{ } & G_{1}  \ar[d]_{ } \ar[r]^{ } &0 \\
0 \ar[r]^{} & G^{'}             \ar[r]^{ } &H\ar[d]_{} \ar[r]^{} & N \ar[d]_{ } \ar[r]^{ } &0 \\
            &                            &0             & 0               &   }$$
Since $\mathcal{GF}_{\mathcal{(X,Y)}}(R)$ is closed under extensions, $L\in \mathcal{GF}_{\mathcal{(X,Y)}}(R)$. Thus, $H$ has a $\mathcal{Y}\otimes_{R}-$ exact $\mathcal{GF}_{\mathcal{(X,Y)}}(R)$-resolution. Note that $0\rightarrow H\rightarrow X_{0}\rightarrow M\rightarrow 0$\ is $\mathcal {Y}\otimes_{R}-$ exact. By repeating the preceding process, we get that $M$ has a $\mathcal{Y}\otimes_{R}-$ exact $\mathcal {X}$-resolution.
\end{proof}

Dually, we can prove the following lemma.

\begin{lemma}\label{lem2.2} Assume that $\mathcal{GF}_{\mathcal{(X,Y)}}(R)$ is closed under extensions. Then a left $R$-module $M$ has a $\mathcal{Y}\otimes_{R}-$ exact $\mathcal{X}$-coresolution if and only if it has a $\mathcal{Y}\otimes_{R}-$ exact $\mathcal{GF}_{\mathcal{(X,Y)}}(R)$-coresolution.
\end{lemma}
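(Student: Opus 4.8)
The plan is to dualize the proof of Lemma~\ref{lem2.1} step by step, interchanging resolutions with coresolutions, epimorphisms with monomorphisms, kernels with cokernels, and pullback squares with pushout squares. The $\lq\lq$only if" direction is immediate: since $\mathcal{X}\subseteq\mathcal{GF}_{\mathcal{(X,Y)}}(R)$ by Remark~\ref{rem2.1}(1), any $\mathcal{Y}\otimes_{R}-$ exact $\mathcal{X}$-coresolution is already a $\mathcal{Y}\otimes_{R}-$ exact $\mathcal{GF}_{\mathcal{(X,Y)}}(R)$-coresolution. So the whole content lies in the $\lq\lq$if" direction, where I would build a $\mathcal{Y}\otimes_{R}-$ exact $\mathcal{X}$-coresolution from a given $\mathcal{Y}\otimes_{R}-$ exact $\mathcal{GF}_{\mathcal{(X,Y)}}(R)$-coresolution of $M$.

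First I would peel off the initial term of the given coresolution, obtaining a $\mathcal{Y}\otimes_{R}-$ exact short exact sequence $0\rightarrow M\rightarrow G^{0}\rightarrow N\rightarrow 0$ with $G^{0}\in\mathcal{GF}_{\mathcal{(X,Y)}}(R)$ and $N$ still admitting a $\mathcal{Y}\otimes_{R}-$ exact $\mathcal{GF}_{\mathcal{(X,Y)}}(R)$-coresolution. Since $G^{0}$ is Gorenstein $(\mathcal{X},\mathcal{Y})$-flat, its defining sequence furnishes a $\mathcal{Y}\otimes_{R}-$ exact short exact sequence $0\rightarrow G^{0}\rightarrow X^{0}\rightarrow G''\rightarrow 0$ with $X^{0}\in\mathcal{X}$ and cokernel $G''\in\mathcal{GF}_{\mathcal{(X,Y)}}(R)$ (an image of that defining sequence, Gorenstein $(\mathcal{X},\mathcal{Y})$-flat by Remark~\ref{rem2.1}(2)). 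Forming the pushout of the span $X^{0}\leftarrow G^{0}\rightarrow N$ produces a commutative $3\times 3$ diagram whose middle row $0\rightarrow M\rightarrow X^{0}\rightarrow H\rightarrow 0$ embeds $M$ into $X^{0}\in\mathcal{X}$, and whose right-hand column is $0\rightarrow N\rightarrow H\rightarrow G''\rightarrow 0$. Because the top row and the middle column are $\mathcal{Y}\otimes_{R}-$ exact, the middle row is $\mathcal{Y}\otimes_{R}-$ exact (its monomorphism $M\hookrightarrow X^{0}$ factors as a composite of maps that stay injective after tensoring), and the Snake lemma then transports exactness to the right column.

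The crux, exactly as in Lemma~\ref{lem2.1}, is to show that the cokernel $H$ again carries a $\mathcal{Y}\otimes_{R}-$ exact $\mathcal{GF}_{\mathcal{(X,Y)}}(R)$-coresolution, so that the procedure can be iterated on $0\rightarrow M\rightarrow X^{0}\rightarrow H\rightarrow 0$. For this I would split off $0\rightarrow N\rightarrow G^{1}\rightarrow K\rightarrow 0$ from the coresolution of $N$ (with $G^{1}\in\mathcal{GF}_{\mathcal{(X,Y)}}(R)$ and $K$ again admitting such a coresolution) and take a second pushout, now of the span $G^{1}\leftarrow N\rightarrow H$. This yields a middle term $L$ fitting into $0\rightarrow G^{1}\rightarrow L\rightarrow G''\rightarrow 0$, so that $L\in\mathcal{GF}_{\mathcal{(X,Y)}}(R)$ by the standing hypothesis that the class is closed under extensions, together with a column $0\rightarrow H\rightarrow L\rightarrow K\rightarrow 0$. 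Concatenating $L$ with a $\mathcal{Y}\otimes_{R}-$ exact $\mathcal{GF}_{\mathcal{(X,Y)}}(R)$-coresolution of $K$ then gives the desired coresolution of $H$, and iterating assembles the full $\mathcal{Y}\otimes_{R}-$ exact $\mathcal{X}$-coresolution of $M$.

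I expect the main obstacle to be purely the bookkeeping of $\mathcal{Y}\otimes_{R}-$ exactness as it is carried through the two pushout squares: at each stage one must verify that the newly created short exact sequences remain exact after applying $Y\otimes_{R}-$ for every $Y\in\mathcal{Y}$. The closure of $\mathcal{GF}_{\mathcal{(X,Y)}}(R)$ under extensions is precisely what keeps the middle pushout term $L$ inside the class, while the right-exactness of $Y\otimes_{R}-$ combined with the Snake lemma and naturality of the connecting maps handles the propagation of exactness to the remaining rows and columns.
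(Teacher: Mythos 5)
Your proposal is correct and is essentially the paper's own proof: the paper proves Lemma~\ref{lem2.2} simply by dualizing the proof of Lemma~\ref{lem2.1}, and your argument carries out exactly that dualization (two pushouts replacing the two pullbacks, with the extension-closure hypothesis keeping the pushout term $L$ in $\mathcal{GF}_{\mathcal{(X,Y)}}(R)$). Your handling of the $\mathcal{Y}\otimes_{R}-$ exactness bookkeeping is also sound, since tensoring preserves pushouts and the relevant monomorphisms stay injective after applying $Y\otimes_{R}-$.
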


Let $\mathcal{GF}^{2}_{\mathcal{(X,Y)}}(R)$ be the class of left $R$-modules $M$ for which there exists an exact sequence of left $R$-modules in $\mathcal{GF}_{\mathcal{(X,Y)}}(R)$
$$\cdots \rightarrow G_{1}\rightarrow G_{0} \rightarrow G^{0} \rightarrow G^{1}\rightarrow \cdots
$$
such that $M\cong \Ker(G^{0}\rightarrow G^{1})$ and such that $\mathcal{Y}\otimes_{R}-$ leaves the sequence exact.
It is obvious that $\mathcal{GF}_{\mathcal{(X,Y)}}(R)\subseteq
\mathcal{GF}^{2}_{\mathcal{(X,Y)}}(R)$. As a consequence of Lemmas \ref{lem2.1} and \ref{lem2.2}, we have the following result.

\begin{theorem}\label{the2.1} If $\mathcal{GF}_{\mathcal{(X,Y)}}(R)$ is closed under extensions, then $\mathcal{GF}_{\mathcal{(X,Y)}}(R)=
\mathcal{GF}^{2}_{\mathcal{(X,Y)}}(R)$.
\end{theorem}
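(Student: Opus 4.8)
The plan is to prove the nontrivial inclusion $\mathcal{GF}^{2}_{\mathcal{(X,Y)}}(R)\subseteq\mathcal{GF}_{\mathcal{(X,Y)}}(R)$, since the reverse inclusion has just been observed. So take $M\in\mathcal{GF}^{2}_{\mathcal{(X,Y)}}(R)$, witnessed by a $\mathcal{Y}\otimes_{R}-$ exact exact sequence
$$\cdots\to G_{1}\to G_{0}\to G^{0}\to G^{1}\to\cdots$$
of modules in $\mathcal{GF}_{\mathcal{(X,Y)}}(R)$ with $M\cong\Ker(G^{0}\to G^{1})$. The idea is to split this complex at $M$ into its left and right halves and feed each half into Lemmas \ref{lem2.1} and \ref{lem2.2}.

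First I would record the standard fact that, for an exact complex that stays exact after applying $Y\otimes_{R}-$ for every $Y\in\mathcal{Y}$, each of the associated short exact sequences of cocycles also stays exact after tensoring with $Y$. In particular this makes the truncations $\cdots\to G_{1}\to G_{0}\to M\to 0$ and $0\to M\to G^{0}\to G^{1}\to\cdots$ again $\mathcal{Y}\otimes_{R}-$ exact; the essential point is that injectivity of $Y\otimes_{R}M\to Y\otimes_{R}G^{0}$ is forced by exactness of the tensored complex one spot further along. Thus the left half is a $\mathcal{Y}\otimes_{R}-$ exact $\mathcal{GF}_{\mathcal{(X,Y)}}(R)$-resolution of $M$, and the right half is a $\mathcal{Y}\otimes_{R}-$ exact $\mathcal{GF}_{\mathcal{(X,Y)}}(R)$-coresolution of $M$.

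Now I would invoke the hypothesis that $\mathcal{GF}_{\mathcal{(X,Y)}}(R)$ is closed under extensions, which is exactly what Lemmas \ref{lem2.1} and \ref{lem2.2} require. Lemma \ref{lem2.1} upgrades the resolution to a $\mathcal{Y}\otimes_{R}-$ exact $\mathcal{X}$-resolution $\cdots\to X_{1}\to X_{0}\to M\to 0$, and Lemma \ref{lem2.2} upgrades the coresolution to a $\mathcal{Y}\otimes_{R}-$ exact $\mathcal{X}$-coresolution $0\to M\to X^{0}\to X^{1}\to\cdots$. Splicing these at $M$ produces a complex $\cdots\to X_{1}\to X_{0}\to X^{0}\to X^{1}\to\cdots$ of modules in $\mathcal{X}$ which is exact with $M\cong\Ker(X^{0}\to X^{1})$; since for each $Y\in\mathcal{Y}$ the map $Y\otimes_{R}X_{0}\to Y\otimes_{R}M$ is surjective and $Y\otimes_{R}M\to Y\otimes_{R}X^{0}$ is injective, the spliced complex remains $\mathcal{Y}\otimes_{R}-$ exact. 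This exhibits $M$ as Gorenstein $(\mathcal{X},\mathcal{Y})$-flat, finishing the argument.

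I expect the main obstacle to be purely the homological bookkeeping at $M$, namely checking that $\mathcal{Y}\otimes_{R}-$ exactness survives both the truncation and the splicing. Once that is in hand, the proof is a formal two-step application of the preceding lemmas, with the extension-closure hypothesis entering only through them.
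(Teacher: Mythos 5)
Your proof is correct and follows exactly the route the paper intends: the paper derives Theorem \ref{the2.1} precisely "as a consequence of Lemmas \ref{lem2.1} and \ref{lem2.2}," and your argument is a careful filling-in of that derivation (truncate the $\mathcal{Y}\otimes_{R}-$ exact complex at $M$, upgrade each half via the two lemmas, and splice the resulting $\mathcal{X}$-resolution and $\mathcal{X}$-coresolution back together). The bookkeeping you highlight --- that $\mathcal{Y}\otimes_{R}-$ exactness passes to the truncations and survives the splice via surjectivity of $Y\otimes_{R}X_{0}\to Y\otimes_{R}M$ and injectivity of $Y\otimes_{R}M\to Y\otimes_{R}X^{0}$ --- is exactly the standard verification the paper leaves implicit.
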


Note that Theorem \ref{the2.1} recovers \cite[Main Theorem]{BK12}, \cite[Theorem 3.12]{WGM17}, \cite[Theorem 2.13]{XD13}, and \cite[Theorem 4.3]{YL12-1}.

\begin{corollary}\label{cor2.1} Assume that $\mathcal{GF}_{\mathcal{(X,Y)}}(R)$ is closed under extensions. Then the following are equivalent for left a $R$-module $M$:

$\mathrm{(1)}$ $M$ is in $\mathcal{GF}_{\mathcal{(X,Y)}}(R)$.

$\mathrm{(2)}$ There exists a $\mathcal{Y}\otimes_{R}-$ exact exact sequence of left $R$-modules in $\mathcal{U}$
$$\cdots \rightarrow G_{1}\rightarrow G_{0} \rightarrow G^{0} \rightarrow G^{1}\rightarrow \cdots
$$
and $M\cong \Ker(G^{0}\rightarrow G^{1})$ for some class $\mathcal{U}$ with $\mathcal{X}\subseteq\mathcal{U}\subseteq \mathcal{GF}_{\mathcal{(X,Y)}}(R)$.

$\mathrm{(3)}$ There exists a $\mathcal{V}\otimes_{R}-$ exact exact sequence of left $R$-modules in $\mathcal{GF}_{\mathcal{(X,Y)}}(R)$
$$\cdots \rightarrow G_{1}\rightarrow G_{0} \rightarrow G^{0} \rightarrow G^{1}\rightarrow \cdots
$$
and $M\cong \Ker(G^{0}\rightarrow G^{1})$ for some class $\mathcal{V}$ with $\mathcal{V}\subseteq\mathcal{Y}$.
\end{corollary}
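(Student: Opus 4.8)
The plan is to run the cyclic chain $(1)\Rightarrow(2)\Rightarrow(3)\Rightarrow(1)$, with the first two implications being purely formal and the last one carrying all the weight through Theorem \ref{the2.1}. The guiding observation is that conditions $(1)$, $(2)$, $(3)$ are three points of a single ``sandwich'': condition $(1)$ is the defining condition (terms in $\mathcal{X}$, full $\mathcal{Y}\otimes_{R}-$ exactness), while the most permissive instance---terms in $\mathcal{GF}_{\mathcal{(X,Y)}}(R)$ with full $\mathcal{Y}\otimes_{R}-$ exactness---is exactly the condition defining membership in $\mathcal{GF}^{2}_{\mathcal{(X,Y)}}(R)$. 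Theorem \ref{the2.1} collapses these two endpoints into one class, and every intermediate choice of $\mathcal{U}$ or $\mathcal{V}$ is squeezed between them.

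For $(1)\Rightarrow(2)$ I would simply take $\mathcal{U}=\mathcal{X}$: the exact sequence witnessing $M\in\mathcal{GF}_{\mathcal{(X,Y)}}(R)$ is $\mathcal{Y}\otimes_{R}-$ exact with terms in $\mathcal{X}$, and $\mathcal{X}\subseteq\mathcal{X}\subseteq\mathcal{GF}_{\mathcal{(X,Y)}}(R)$ by Remark \ref{rem2.1}(1). For $(2)\Rightarrow(3)$ the same sequence serves: its terms lie in $\mathcal{U}\subseteq\mathcal{GF}_{\mathcal{(X,Y)}}(R)$, and being $\mathcal{Y}\otimes_{R}-$ exact it is a fortiori $\mathcal{V}\otimes_{R}-$ exact for the choice $\mathcal{V}=\mathcal{Y}$. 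Both steps are immediate and use no hypothesis beyond the inclusions recorded in the definition and Remark \ref{rem2.1}.

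The crux is $(3)\Rightarrow(1)$. Here the sequence supplied by $(3)$ has all of its terms in $\mathcal{GF}_{\mathcal{(X,Y)}}(R)$ and, in the operative case $\mathcal{V}=\mathcal{Y}$, is $\mathcal{Y}\otimes_{R}-$ exact with $M\cong\Ker(G^{0}\to G^{1})$; this is precisely the data defining membership in $\mathcal{GF}^{2}_{\mathcal{(X,Y)}}(R)$, so the standing hypothesis that $\mathcal{GF}_{\mathcal{(X,Y)}}(R)$ is closed under extensions lets me invoke Theorem \ref{the2.1} to conclude $M\in\mathcal{GF}^{2}_{\mathcal{(X,Y)}}(R)=\mathcal{GF}_{\mathcal{(X,Y)}}(R)$. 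The main obstacle I anticipate is making sure that the two relaxations genuinely do not enlarge the class: one must resist the tempting but wrong move of applying Theorem \ref{the2.1} to the pair $(\mathcal{X},\mathcal{V})$, since the monotonicity of Remark \ref{rem2.2}(2) would then only yield $M\in\mathcal{GF}_{(\mathcal{X},\mathcal{V})}(R)$, a class containing $\mathcal{GF}_{\mathcal{(X,Y)}}(R)$ and hence too large. The correct point, which I would emphasize, is that the terms of the sequence are required to sit in the \emph{full} class $\mathcal{GF}_{\mathcal{(X,Y)}}(R)$ together with full $\mathcal{Y}\otimes_{R}-$ exactness, so that $(3)$ reduces exactly to the $\mathcal{GF}^{2}$ description and Theorem \ref{the2.1} applies on the nose.
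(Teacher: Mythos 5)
Your overall strategy---a cyclic chain in which $(1)\Rightarrow(2)$ and $(2)\Rightarrow(3)$ are formal and all the weight rests on Theorem \ref{the2.1}---is exactly how the paper intends this corollary to be read: the paper gives no separate proof and presents it as an immediate consequence of Theorem \ref{the2.1}. Your steps $(1)\Rightarrow(2)$ (take $\mathcal{U}=\mathcal{X}$) and $(2)\Rightarrow(3)$ (take $\mathcal{V}=\mathcal{Y}$) are correct, and in fact $(2)\Rightarrow(1)$ directly is also sound, since terms in $\mathcal{U}\subseteq \mathcal{GF}_{\mathcal{(X,Y)}}(R)$ together with $\mathcal{Y}\otimes_{R}-$ exactness is precisely membership in $\mathcal{GF}^{2}_{\mathcal{(X,Y)}}(R)$; so the equivalence $(1)\Leftrightarrow(2)$ is complete.

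The gap is in $(3)\Rightarrow(1)$. As written, $(3)$ is existential: it hands you \emph{some} class $\mathcal{V}\subseteq\mathcal{Y}$ and a sequence that is only $\mathcal{V}\otimes_{R}-$ exact. Your phrase ``in the operative case $\mathcal{V}=\mathcal{Y}$'' silently replaces this arbitrary $\mathcal{V}$ by $\mathcal{Y}$, and your closing claim that $(3)$ comes ``together with full $\mathcal{Y}\otimes_{R}-$ exactness'' is a misreading: nothing in $(3)$ forces the sequence to stay exact under $Y\otimes_{R}-$ for $Y\in\mathcal{Y}\setminus\mathcal{V}$. The hypothesis of $(3)$ is genuinely weaker than membership in $\mathcal{GF}^{2}_{\mathcal{(X,Y)}}(R)$: for instance $\mathcal{V}=\{0\}$ is permitted, and then $(3)$ asserts only that $M$ is a kernel in an exact complex with terms in $\mathcal{GF}_{\mathcal{(X,Y)}}(R)$, about which Theorem \ref{the2.1} says nothing. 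None of the paper's tools bridges this: the monotonicity of Remark \ref{rem2.2}(2) goes the wrong way (shrinking the test class \emph{enlarges} the Gorenstein flat class, a point you yourself note), and Lemmas \ref{lem2.1} and \ref{lem2.2} preserve the test class, they never upgrade $\mathcal{V}\otimes_{R}-$ exactness to $\mathcal{Y}\otimes_{R}-$ exactness. The implication becomes immediate only if one reads the quantifier in $(3)$ universally---``for every class $\mathcal{V}\subseteq\mathcal{Y}$ there exists such a sequence''---so that instantiating at $\mathcal{V}=\mathcal{Y}$ is legitimate; this is evidently the only reading under which the statement really is a corollary of Theorem \ref{the2.1} (and arguably the statement itself is at fault for suggesting otherwise), but a correct proof must say this explicitly rather than smuggle it in as ``the operative case''.
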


Applying Corollary \ref{cor2.1} to Example \ref{ex2.1}(2), we have

\begin{corollary} \label{cor2.2} Assume that $\mathcal{GF}_{\mathcal{Y}}(R)$ is closed under extensions. Then the following are equivalent for a left $R$-module $M$:

$\mathrm{(1)}$ $M$ is in $\mathcal{GF}_{\mathcal{Y}}(R)$.

$\mathrm{(2)}$ There exists a $\mathcal{Y}\otimes_{R}-$ exact exact sequence of left $R$-modules in $\mathcal{U}$
$$\cdots \rightarrow G_{1}\rightarrow G_{0} \rightarrow G^{0} \rightarrow G^{1}\rightarrow \cdots
$$
and $M\cong \Ker(G^{0}\rightarrow G^{1})$ for some class $\mathcal{U}$ with $\mathcal{F}\subseteq\mathcal{U}\subseteq \mathcal{GF}_{\mathcal{Y}}(R)$.

$\mathrm{(3)}$ There exists a $\mathcal{V}\otimes_{R}-$ exact exact sequence of left $R$-modules in $\mathcal{GF}_{\mathcal{Y}}(R)$
$$\cdots \rightarrow G_{1}\rightarrow G_{0} \rightarrow G^{0} \rightarrow G^{1}\rightarrow \cdots
$$
and $M\cong \Ker(G^{0}\rightarrow G^{1})$ for some class $\mathcal{V}$ with $\mathcal{V}\subseteq\mathcal{Y}$.
\end{corollary}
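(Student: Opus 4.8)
The plan is to obtain Corollary \ref{cor2.2} as the instance $\mathcal{X}=\mathcal{F}$ of Corollary \ref{cor2.1}, so that no independent argument is required. The first step is to invoke Example \ref{ex2.1}(2): taking $\mathcal{X}=\mathcal{F}$, the class $\mathcal{GF}_{\mathcal{(X,Y)}}(R)$ of Gorenstein $(\mathcal{F},\mathcal{Y})$-flat modules coincides with the class $\mathcal{GF}_{\mathcal{Y}}(R)$ of Gorenstein $\mathcal{Y}$-flat modules. Under this identification the standing hypothesis of Corollary \ref{cor2.2}, that $\mathcal{GF}_{\mathcal{Y}}(R)$ be closed under extensions, is literally the hypothesis of Corollary \ref{cor2.1} for the pair $(\mathcal{F},\mathcal{Y})$, so there is nothing to check on that side.

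The second step is simply to read off the three conditions of Corollary \ref{cor2.1} after substituting $\mathcal{X}=\mathcal{F}$. Condition (1) becomes $M\in\mathcal{GF}_{\mathcal{Y}}(R)$; in condition (2) the constraint $\mathcal{X}\subseteq\mathcal{U}\subseteq\mathcal{GF}_{\mathcal{(X,Y)}}(R)$ specializes to $\mathcal{F}\subseteq\mathcal{U}\subseteq\mathcal{GF}_{\mathcal{Y}}(R)$; and condition (3) is unchanged, since it only constrains $\mathcal{V}\subseteq\mathcal{Y}$ and refers to the class $\mathcal{GF}_{\mathcal{Y}}(R)$. These three statements are exactly (1), (2) and (3) of Corollary \ref{cor2.2}, so their equivalence follows verbatim from Corollary \ref{cor2.1}.

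There is in fact no genuine obstacle to overcome here: all of the homological content — the pullback constructions, the use of the Snake lemma, and the extension-closure hypothesis — has already been absorbed into Lemmas \ref{lem2.1} and \ref{lem2.2} and hence into Corollary \ref{cor2.1}. The only point that deserves explicit mention is the translation furnished by Example \ref{ex2.1}(2), which guarantees that the specialized class and the specialized hypothesis match those in the general corollary. Accordingly, I would present the proof as a single sentence: the result is immediate from Corollary \ref{cor2.1} applied with $\mathcal{X}=\mathcal{F}$, together with the identification $\mathcal{GF}_{(\mathcal{F},\mathcal{Y})}(R)=\mathcal{GF}_{\mathcal{Y}}(R)$ of Example \ref{ex2.1}(2).
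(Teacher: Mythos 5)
Your proposal is correct and matches the paper's own proof exactly: the paper derives Corollary \ref{cor2.2} by applying Corollary \ref{cor2.1} to Example \ref{ex2.1}(2), i.e.\ by specializing $\mathcal{X}=\mathcal{F}$ and using the identification $\mathcal{GF}_{(\mathcal{F},\mathcal{Y})}(R)=\mathcal{GF}_{\mathcal{Y}}(R)$. Your more detailed justification of why the hypotheses and the three conditions specialize verbatim is exactly the content the paper leaves implicit.
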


Recall that in \cite[Definition 2.6]{BGH14} a left $R$-module $M$ is called absolutely clean or FP$_{\infty}$-injective if $\Ext_{R}^{1}(N, M)=0$ for all left $R$-modules $N$ of type FP$_{\infty}$. If $\mathcal{Y}$ is the class of all absolutely clean $R$-modules, then Gorenstein $\mathcal{Y}$-flat modules are precisely Gorenstein AC-flat modules in \cite[Definition 4.1]{BEI17}. Corollary \ref{cor2.2} shows that Gorenstein AC-flat modules have a strong stability.

We say that $\mathcal{Y}\top\mathcal{X}$ if $\Tor_{i}^{R}(Y, X)=0$ for all $Y\in \mathcal{Y}$, $X\in \mathcal{X}$ and $i\geqslant1$. In the following, we always assume that $\mathcal{Y}\top\mathcal{X}$, and $\mathcal{P}\subseteq\mathcal{X}$.

\begin{lemma}\label{lem2.3} The following assertions are equivalent for a left $R$-module $M$.

$\mathrm{(1)}$ $M$ is Gorenstein $\mathcal{(X,Y)}$-flat.

$\mathrm{(2)}$ $\Tor_{i}^{R}(Y, M)=0$ for all $i>0$ and all $Y\in \mathcal{Y}$, and there exists an exact sequence of left $R$-modules $0\rightarrow M\rightarrow X^{0}\rightarrow X^{1}\rightarrow \cdots$ with each $X^{i}\in \mathcal{X}$, such that $\mathcal{Y}\otimes_{R}-$ leaves the sequence exact.

$\mathrm{(3)}$ There exists a short exact sequence of left $R$-modules $0\rightarrow M\rightarrow X\rightarrow G\rightarrow 0$ with $X\in \mathcal{X}$ and $G\in \mathcal{GF}_{\mathcal{(X,Y)}}(R)$.
\end{lemma}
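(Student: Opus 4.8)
The plan is to prove the implications $(1)\Rightarrow(2)\Rightarrow(3)\Rightarrow(1)$, using the standing hypotheses $\mathcal{Y}\top\mathcal{X}$ (so $\Tor_i^R(Y,X)=0$ for $i\geqslant1$, $Y\in\mathcal{Y}$, $X\in\mathcal{X}$) and $\mathcal{P}\subseteq\mathcal{X}$ throughout. For $(1)\Rightarrow(2)$, suppose $M$ is Gorenstein $(\mathcal{X},\mathcal{Y})$-flat, so $M\cong\Ker(X^0\to X^1)$ for a $\mathcal{Y}\otimes_R-$ exact complex $\mathbb{X}:\cdots\to X_1\to X_0\to X^0\to X^1\to\cdots$ of modules in $\mathcal{X}$. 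The right half $0\to M\to X^0\to X^1\to\cdots$ is immediately the required coresolution, and it is $\mathcal{Y}\otimes_R-$ exact by hypothesis. For the $\Tor$ vanishing, I would break $\mathbb{X}$ into short exact sequences $0\to K_{i+1}\to X_i\to K_i\to 0$ (where $M=K_0$ reading to the left). Each such sequence is $\mathcal{Y}\otimes_R-$ exact, meaning $\Tor_1^R(Y,K_i)\hookrightarrow Y\otimes K_{i+1}$ has trivial image, i.e.\ $Y\otimes_R K_{i+1}\to Y\otimes_R X_i$ is injective; combined with the long exact sequence in $\Tor$ and the fact that $\Tor_{\geqslant1}^R(Y,X_i)=0$, a standard dimension-shift gives $\Tor_i^R(Y,M)=0$ for all $i>0$ and all $Y\in\mathcal{Y}$.

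For $(2)\Rightarrow(3)$, I take the first step of the given coresolution, $0\to M\to X^0\to G\to 0$ with $X^0\in\mathcal{X}$ and $G=\Coker(M\to X^0)=\Ker(X^1\to X^2)$. The truncated complex $0\to G\to X^1\to X^2\to\cdots$ is still a $\mathcal{Y}\otimes_R-$ exact $\mathcal{X}$-coresolution of $G$. To certify $G\in\mathcal{GF}_{(\mathcal{X},\mathcal{Y})}(R)$ I must supply the left half of a complete $\mathcal{Y}\otimes_R-$ exact $\mathcal{X}$-complex around $G$. This is where I construct a $\mathcal{Y}\otimes_R-$ exact $\mathcal{X}$-resolution $\cdots\to X_1\to X_0\to G\to 0$: using $\mathcal{P}\subseteq\mathcal{X}$, take a projective (hence $\mathcal{X}$-) resolution of $G$, which is automatically $\mathcal{Y}\otimes_R-$ exact precisely because $\Tor_{\geqslant1}^R(Y,G)=0$ — and this vanishing follows from the $\Tor$-vanishing for $M$ together with the short exact sequence $0\to M\to X^0\to G\to 0$ and $\Tor_{\geqslant1}^R(Y,X^0)=0$. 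Splicing the projective resolution of $G$ with its coresolution $0\to G\to X^1\to\cdots$ exhibits $G$ as a kernel inside a $\mathcal{Y}\otimes_R-$ exact $\mathcal{X}$-complex, so $G$ is Gorenstein $(\mathcal{X},\mathcal{Y})$-flat, and $0\to M\to X^0\to G\to 0$ is the desired sequence.

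For $(3)\Rightarrow(1)$, given $0\to M\to X\to G\to 0$ with $X\in\mathcal{X}$ and $G\in\mathcal{GF}_{(\mathcal{X},\mathcal{Y})}(R)$, I first note this sequence is $\mathcal{Y}\otimes_R-$ exact: applying $Y\otimes_R-$, the obstruction lies in $\Tor_1^R(Y,G)=0$, which holds because $G$ being Gorenstein $(\mathcal{X},\mathcal{Y})$-flat already forces $\Tor_{i>0}^R(Y,G)=0$ by the equivalence $(1)\Leftrightarrow(2)$ just established. I then assemble the two-sided complex: on the right, write down a $\mathcal{Y}\otimes_R-$ exact $\mathcal{X}$-coresolution of $G$ (which exists by the definition of $G\in\mathcal{GF}_{(\mathcal{X},\mathcal{Y})}(R)$) and prepend the map $X\to G$, giving $0\to M\to X\to X^0_G\to X^1_G\to\cdots$; on the left, splice in a $\mathcal{Y}\otimes_R-$ exact $\mathcal{X}$-resolution of $M$ (built from projectives as above, valid since the $\Tor$-vanishing for $M$ transfers from that of $G$ through the short exact sequence). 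The result realizes $M$ as $\Ker$ of a map inside a $\mathcal{Y}\otimes_R-$ exact complex of modules in $\mathcal{X}$, so $M\in\mathcal{GF}_{(\mathcal{X},\mathcal{Y})}(R)$.

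I expect the main obstacle to be the bookkeeping in $(2)\Rightarrow(3)$ and $(3)\Rightarrow(1)$ that guarantees the spliced resolution stays $\mathcal{Y}\otimes_R-$ exact \emph{at the splice point}: one must check that the projective resolution feeding into $G$ (or into $M$) glues with the prescribed coresolution without destroying exactness after applying $Y\otimes_R-$. The clean way around this is to observe that $\mathcal{Y}\otimes_R-$ exactness of a resolution of a module $N$ is equivalent to $\Tor_{i>0}^R(Y,N)=0$ under the hypothesis $\mathcal{Y}\top\mathcal{X}$, so every exactness claim reduces to a $\Tor$-vanishing statement, and these propagate cleanly through short exact sequences by dimension shifting. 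Once that reduction is in place the argument is formal.
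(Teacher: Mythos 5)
Your proof is correct and is essentially the same argument the paper intends: the paper's proof of Lemma \ref{lem2.3} simply defers to the ``standard argument'' of \cite[Lemma 2.4]{Ben09}, and your cycle $(1)\Rightarrow(2)\Rightarrow(3)\Rightarrow(1)$ --- dimension shifting via $\Tor$ with the short exact sequences of kernels, using $\mathcal{P}\subseteq\mathcal{X}$ to build $\mathcal{Y}\otimes_{R}-$ exact resolutions from projective ones, and splicing them with the given coresolutions --- is precisely that standard argument with the details written out.
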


\begin{proof} Using the definition of Gorenstein $\mathcal{(X,Y)}$-flat modules, it is obtained by standard argument similar to the proof of \cite[Lemma 2.4]{Ben09}.
\end{proof}

\begin{lemma}\label{lem2.4} Let $0\rightarrow M\rightarrow L\rightarrow N\rightarrow 0$ be a short exact sequence in $R$-Mod. If $\mathcal{X}$ is closed under extensions, $M\in \mathcal{GF}_{\mathcal{(X,Y)}}(R)$, and $N \in \mathcal{X}$, then $L\in \mathcal{GF}_{\mathcal{(X,Y)}}(R)$.
\end{lemma}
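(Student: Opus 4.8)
The plan is to reduce the claim to the short-exact-sequence characterization of $\mathcal{GF}_{\mathcal{(X,Y)}}(R)$ provided by Lemma~\ref{lem2.3}(3), and to construct the sequence needed for $L$ by a single pushout. Since $M\in\mathcal{GF}_{\mathcal{(X,Y)}}(R)$, Lemma~\ref{lem2.3} supplies a short exact sequence $0\to M\to X_{0}\to G\to 0$ with $X_{0}\in\mathcal{X}$ and $G\in\mathcal{GF}_{\mathcal{(X,Y)}}(R)$; this is the raw material for the argument.

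First I would form the pushout $P$ of the given inclusion $M\to L$ along the map $M\to X_{0}$, producing a commutative diagram with exact rows and columns
$$\xymatrix{
0 \ar[r] & M \ar[r] \ar[d] & L \ar[r] \ar[d] & N \ar[r] \ar@{=}[d] & 0 \\
0 \ar[r] & X_{0} \ar[r] \ar[d] & P \ar[r] \ar[d] & N \ar[r] & 0 \\
& G \ar@{=}[r] \ar[d] & G \ar[d] & & \\
& 0 & 0 & & }$$
Reading off the bottom row and the middle column, I obtain two short exact sequences, $0\to X_{0}\to P\to N\to 0$ and $0\to L\to P\to G\to 0$.

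Next, from the bottom row, since $X_{0},N\in\mathcal{X}$ and $\mathcal{X}$ is closed under extensions by hypothesis, I conclude $P\in\mathcal{X}$. The middle column $0\to L\to P\to G\to 0$ then has exactly the shape demanded by Lemma~\ref{lem2.3}(3): the middle term lies in $\mathcal{X}$ and the cokernel lies in $\mathcal{GF}_{\mathcal{(X,Y)}}(R)$. Invoking the implication $(3)\Rightarrow(1)$ of Lemma~\ref{lem2.3} yields $L\in\mathcal{GF}_{\mathcal{(X,Y)}}(R)$, which is the assertion.

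There is no genuinely hard step here; the argument is a routine pushout. The one place where a hypothesis is essential is the passage $X_{0},N\in\mathcal{X}\Rightarrow P\in\mathcal{X}$, which is precisely where closure of $\mathcal{X}$ under extensions enters, and the final appeal to Lemma~\ref{lem2.3} is legitimate because the standing assumptions $\mathcal{Y}\top\mathcal{X}$ and $\mathcal{P}\subseteq\mathcal{X}$ remain in force. A slightly longer alternative would verify Lemma~\ref{lem2.3}(2) directly: the long exact $\Tor$ sequence of $0\to M\to L\to N\to 0$ gives $\Tor_{i}^{R}(Y,L)=0$ for all $i>0$ and $Y\in\mathcal{Y}$, using that $\Tor_{i}^{R}(Y,M)=0$ (as $M\in\mathcal{GF}_{\mathcal{(X,Y)}}(R)$) and $\Tor_{i}^{R}(Y,N)=0$ (as $N\in\mathcal{X}$ and $\mathcal{Y}\top\mathcal{X}$), after which one splices a $\mathcal{Y}\otimes_{R}-$ exact $\mathcal{X}$-coresolution of $L$ onto the right; the pushout packages this more economically.
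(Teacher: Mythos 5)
Your proof is correct and follows essentially the same route as the paper's: both take the short exact sequence $0\to M\to X_{0}\to G\to 0$ furnished by Lemma~\ref{lem2.3}, form the pushout of $M\to L$ along $M\to X_{0}$, use closure of $\mathcal{X}$ under extensions on the row $0\to X_{0}\to P\to N\to 0$ to get $P\in\mathcal{X}$, and then apply Lemma~\ref{lem2.3}(3)$\Rightarrow$(1) to the column $0\to L\to P\to G\to 0$. The diagram, the lemma invoked, and the point where the extension hypothesis enters all coincide with the paper's argument.
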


\begin{proof}Since $M\in \mathcal{GF}_{\mathcal{(X,Y)}}(R)$, there exists an exact sequence of left $R$-modules $0\rightarrow M\rightarrow X\rightarrow G\rightarrow 0$, where $X \in \mathcal{X}$, $G\in \mathcal{GF}_{\mathcal{(X,Y)}}(R)$. Consider the following pushout diagram
\begin{center}
$\xymatrix{
     &&  0\ar[d]_{}  & 0 \ar[d]_{} &  \\
      & 0 \ar[r]^{} & M \ar[d]_{} \ar[r]^{} & L\ar[d]_{} \ar[r]^{} & N \ar@{=}[d]_{} \ar[r]^{} & 0  \\
      &0  \ar[r]^{} & X  \ar[d]\ar[r]^{} &U\ar[d]_{} \ar[r]^{} & N  \ar[r]^{} & 0  \\
      && G\ar@{=}[r]^{} \ar[d]& G\ar[d] & \\
    & & 0& 0  &
      }$

\end{center}
Because both $X$ and $N$ are in $\mathcal{X}$, $U$ is in $\mathcal{X}$. Then, by the middle column and from Lemma \ref{lem2.3}, $L$ is in $\mathcal{GF}_{\mathcal{(X,Y)}}(R)$.
\end{proof}

\begin{proposition}\label{prop2.1} The following conditions are equivalent for a ring $R$:

$\mathrm{(1)}$ The class $\mathcal{GF}_{\mathcal{(X,Y)}}(R)$ is closed under extensions.

$\mathrm{(2)}$ The class $\mathcal{GF}_{\mathcal{(X,Y)}}(R)$ is projectively resolving.
\end{proposition}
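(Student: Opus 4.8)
The implication $(2)\Rightarrow(1)$ is immediate, since by definition a projectively resolving class is in particular closed under extensions. The real content is $(1)\Rightarrow(2)$, so the plan is to assume throughout that $\mathcal{GF}_{\mathcal{(X,Y)}}(R)$ is closed under extensions and to verify the three requirements of a projectively resolving class: (a) it contains all projective modules, (b) it is closed under extensions, and (c) it is closed under kernels of epimorphisms. For (a), the standing hypothesis $\mathcal{P}\subseteq\mathcal{X}$ together with Remark~\ref{rem2.1}(1) gives $\mathcal{P}\subseteq\mathcal{X}\subseteq\mathcal{GF}_{\mathcal{(X,Y)}}(R)$; and (b) is exactly the assumption $(1)$. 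Thus everything reduces to verifying (c).

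To prove (c), I would start from a short exact sequence $0\to M\to L\to N\to 0$ with $L,N\in\mathcal{GF}_{\mathcal{(X,Y)}}(R)$ and aim to produce a short exact sequence of the shape required by Lemma~\ref{lem2.3}(3), namely $0\to M\to X\to C\to 0$ with $X\in\mathcal{X}$ and $C\in\mathcal{GF}_{\mathcal{(X,Y)}}(R)$; once this is in hand, Lemma~\ref{lem2.3} yields $M\in\mathcal{GF}_{\mathcal{(X,Y)}}(R)$ at once. The first step is to apply Lemma~\ref{lem2.3}(3) to $L$, obtaining an exact sequence $0\to L\to X\to G\to 0$ with $X\in\mathcal{X}$ and $G\in\mathcal{GF}_{\mathcal{(X,Y)}}(R)$. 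Composing the monomorphisms $M\hookrightarrow L\hookrightarrow X$ then embeds $M$ into the module $X\in\mathcal{X}$ with cokernel $C:=X/M$.

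The key step is to identify $C$. Viewing $M\subseteq L\subseteq X$, the standard inclusion of subquotients produces the exact sequence $0\to L/M\to X/M\to X/L\to 0$, that is $0\to N\to C\to G\to 0$, which appears as the right-hand column of the commutative $3\times 3$ diagram assembled from the two given short exact sequences. Since both $N$ and $G$ lie in $\mathcal{GF}_{\mathcal{(X,Y)}}(R)$ and this class is closed under extensions by hypothesis $(1)$, I conclude $C\in\mathcal{GF}_{\mathcal{(X,Y)}}(R)$. Feeding the sequence $0\to M\to X\to C\to 0$ into Lemma~\ref{lem2.3} finishes the argument. The only place where care is needed is the appeal to Lemma~\ref{lem2.3}(3): the whole strategy hinges on being able to trade membership in $\mathcal{GF}_{\mathcal{(X,Y)}}(R)$ for a single ``one-step'' embedding into $\mathcal{X}$, and on the observation that the extension-closure hypothesis is precisely what is needed to handle the resulting sequence $0\to N\to C\to G\to 0$; the remainder is a routine diagram chase.
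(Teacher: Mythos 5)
Your proposal is correct and follows essentially the same route as the paper: reduce $(1)\Rightarrow(2)$ to closure under kernels of epimorphisms, embed $L$ via Lemma~\ref{lem2.3}(3) into some $X\in\mathcal{X}$, and observe that the cokernel of the composite $M\hookrightarrow L\hookrightarrow X$ is an extension of $G$ by $N$, hence in $\mathcal{GF}_{\mathcal{(X,Y)}}(R)$ by hypothesis, so Lemma~\ref{lem2.3} applies to $0\to M\to X\to C\to 0$. Your module $C=X/M$ and the sequence $0\to N\to C\to G\to 0$ are exactly the paper's pushout module $U$ and the right-hand column of its pushout diagram, so the two arguments differ only in presentation (third isomorphism theorem versus pushout).
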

\begin{proof}(2)$\Rightarrow$ (1) is clear.

(1)$\Rightarrow$ (2) To claim that the class $\mathcal{GF}_{\mathcal{(X,Y)}}(R)$ is projectively resolving, it suffices to prove that it is closed under kernels of epimorphisms. Then, consider a short exact sequence of left $R$-modules $0\rightarrow M\rightarrow L\rightarrow N\rightarrow 0$, where $L$ and $N$ are in $\mathcal{GF}_{\mathcal{(X,Y)}}(R)$. We show that $M$ is in $\mathcal{GF}_{\mathcal{(X,Y)}}(R)$. Since $L$ is in $\mathcal{GF}_{\mathcal{(X,Y)}}(R)$, there exists a short exact sequence of left $R$-modules $0\rightarrow L\rightarrow X\rightarrow G\rightarrow 0$, where $X \in \mathcal{X}$, $G\in \mathcal{GF}_{\mathcal{(X,Y)}}(R)$. Consider the following pushout diagram
\begin{center}
$\xymatrix{
      & & 0\ar[d]_{}  & 0 \ar[d]_{} &  \\
       0\ar[r]& M\ar@{=}[d]^{} \ar[r]& L\ar[d]\ar[r] &N\ar[d]\ar[r]&0 \\
 0\ar[r]& M \ar[r]& X\ar[d]\ar[r] &U\ar[d]\ar[r]&0 \\
&&G\ar[d]_{} \ar@{=}[r]^{} & G \ar[d]_{} \\
    & & 0& 0  &
      }$
\end{center}
By the right column and (1), the $R$-module $U$ is in $\mathcal{GF}_{\mathcal{(X,Y)}}(R)$. Therefore, by the middle row and Lemma \ref{lem2.3}, $M$ is in $\mathcal{GF}_{\mathcal{(X,Y)}}(R)$.
\end{proof}

\begin{corollary}\label{cor2.3} Let $M$ be a left $R$-module. Consider two exact sequences of left $R$-modules,

$$0\rightarrow G_{n}\rightarrow G_{n-1}\rightarrow \cdots\rightarrow G_{0}\rightarrow M\rightarrow 0,~~\text{and}$$
$$0\rightarrow H_{n}\rightarrow H_{n-1}\rightarrow \cdots\rightarrow H_{0}\rightarrow M\rightarrow 0,$$
where $G_{0}, \cdots, G_{n-1}$ and $H_{0}, \cdots, H_{n-1}$ are in $\mathcal{GF}_{\mathcal{(X,Y)}}(R)$. Assume that the class $\mathcal{GF}_{\mathcal{(X,Y)}}(R)$ is closed under extensions, then $G_{n}\in \mathcal{GF}_{\mathcal{(X,Y)}}(R)$ if and only if $H_{n}\in \mathcal{GF}_{\mathcal{(X,Y)}}(R)$.
\end{corollary}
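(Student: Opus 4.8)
The plan is to show that membership of the top syzygy in $\mathcal{GF}_{(X,Y)}(R)$ depends only on $M$ and $n$, by a dimension-shifting induction on $n$ that rests entirely on the fact, supplied by Proposition \ref{prop2.1}, that $\mathcal{A}:=\mathcal{GF}_{(X,Y)}(R)$ is projectively resolving: closed under extensions and kernels of epimorphisms, and containing the projectives (hence closed under finite direct sums, since $A\oplus B$ is an extension of $B$ by $A$). Recall also that $\mathcal{P}\subseteq\mathcal{X}\subseteq\mathcal{A}$, the first inclusion being a standing hypothesis and the second being Remark \ref{rem2.1}(1). Write $\Omega_G=\Ker(G_0\to M)$ and $\Omega_H=\Ker(H_0\to M)$ for the first syzygies, so that $0\to G_n\to\cdots\to G_1\to\Omega_G\to 0$ and $0\to H_n\to\cdots\to H_1\to\Omega_H\to 0$ are length-$(n-1)$ resolutions whose intermediate terms lie in $\mathcal{A}$.

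For the base case $n=1$ I would form the pullback $W$ of $G_0\to M\leftarrow H_0$, which yields short exact sequences $0\to G_1\to W\to H_0\to 0$ and $0\to H_1\to W\to G_0\to 0$. If $H_1\in\mathcal{A}$, the second sequence gives $W\in\mathcal{A}$ because $\mathcal{A}$ is closed under extensions, and then the first sequence gives $G_1\in\mathcal{A}$ because $\mathcal{A}$ is closed under kernels of epimorphisms; the reverse implication is symmetric. I emphasize that \emph{no} closure under direct summands is used: the pullback $W$ plays the role of the direct sum $G_1\oplus H_0\cong H_1\oplus G_0$ that a Schanuel-lemma argument would produce, but it lets projective-resolving closure do the work in its place.

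For the inductive step I would again form the pullback $W$ of $G_0\to M\leftarrow H_0$, obtaining $0\to\Omega_G\to W\to H_0\to 0$ and $0\to\Omega_H\to W\to G_0\to 0$ with $G_0,H_0\in\mathcal{A}$. Assuming $H_n\in\mathcal{A}$, the $H$-resolution exhibits $\Omega_H$ as a module of $\mathcal{A}$-resolution dimension at most $n-1$ with every term in $\mathcal{A}$. Feeding this into $0\to\Omega_H\to W\to G_0\to 0$, and covering $W$ by an object of $\mathcal{A}$ (possible since $\mathcal{P}\subseteq\mathcal{A}$, so every module admits an epimorphism from an $\mathcal{A}$-object), I obtain a length-$(n-1)$ resolution of $W$ with all terms in $\mathcal{A}$. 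Splicing such a resolution with $0\to\Omega_G\to W\to H_0\to 0$ then produces a length-$(n-1)$ resolution of $\Omega_G$ whose every term, including the last, lies in $\mathcal{A}$. Comparing this with the $G$-resolution $0\to G_n\to\cdots\to G_1\to\Omega_G\to 0$ of the same module $\Omega_G$ and invoking the induction hypothesis forces $G_n\in\mathcal{A}$; the converse is symmetric.

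The routine content is the diagram chasing inside the pullbacks and splices. The step I expect to be the main obstacle is the inductive assembly of a full $\mathcal{A}$-resolution of the comparison module $W$ of exactly the right length. This rests on an auxiliary observation — that for a short exact sequence $0\to A\to B\to C\to 0$ with $C\in\mathcal{A}$ one has $\mathcal{A}$-$\dim B\le\mathcal{A}$-$\dim A$, and dually for kernels of epimorphisms — which is itself a consequence of $\mathcal{A}$ being projectively resolving together with $\mathcal{P}\subseteq\mathcal{A}$, and which I would isolate beforehand as a preliminary lemma. This is precisely the place where one must avoid appealing to closure under direct summands (which is not available under the present hypotheses) and instead exploit that $\mathcal{A}$ contains enough projectives to cover the auxiliary modules created by the pullbacks.
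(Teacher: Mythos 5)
Your argument is correct, but it is genuinely more self-contained than the paper's: the paper's entire proof consists of invoking Proposition \ref{prop2.1} (extension-closure implies projectively resolving, under the standing hypotheses $\mathcal{Y}\top\mathcal{X}$ and $\mathcal{P}\subseteq\mathcal{X}$) and then citing the comparison lemma for resolving classes, \cite[Lemma 2.1]{zhu13}, whereas you reconstruct a proof of that lemma from scratch. Your reconstruction is sound: the $n=1$ case via the pullback of $G_0\rightarrow M\leftarrow H_0$ uses only closure under extensions and kernels of epimorphisms, and your auxiliary inequalities do follow from projective resolvency --- the bound $\mathcal{A}$-$\pd(B)\leqslant\mathcal{A}$-$\pd(A)$ for $0\rightarrow A\rightarrow B\rightarrow C\rightarrow 0$ with $C\in\mathcal{A}$ comes from a horseshoe argument against a truncated projective resolution of $C$ (whose syzygies stay in $\mathcal{A}$ because $\mathcal{A}$ is resolving), and the dual bound for the kernel comes from pulling back an $\mathcal{A}$-presentation of $B$ against the epimorphism onto $C$. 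You are also right, and it is worth emphasizing, that closure under direct summands is never needed; this is exactly why the pullback must replace the direct sum of a naive Schanuel-type argument, since that closure is not available under the corollary's hypotheses. One caution on wording: ``splicing'' a length-$(n-1)$ resolution of $W$ with $0\rightarrow\Omega_G\rightarrow W\rightarrow H_0\rightarrow 0$ is not literal splicing, because $\Omega_G$ sits inside $W$ as a submodule rather than a quotient; what that step actually requires is precisely the dual half of your auxiliary lemma, which you do state, so this is loose phrasing rather than a gap. In sum, your route trades the paper's one-line appeal to an external reference for a complete inline argument; both are valid, and yours has the merit of exposing exactly which closure properties carry the proof.
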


\begin{proof}It is obtained by Proposition \ref{prop2.1} and \cite[Lemma 2.1]{zhu13}.
\end{proof}

From \cite[Proposition 1.4]{Hol04}, Remark \ref{rem2.2} and Proposition \ref{prop2.1}, we have the following corollary.

\begin{corollary}If the class $\mathcal{GF}_{\mathcal{(X,Y)}}(R)$ is closed under extensions, then the class $\mathcal{GF}_{\mathcal{(X,Y)}}(R)$ is closed under direct summands.
\end{corollary}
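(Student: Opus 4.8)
The plan is to deduce the statement from Holm's Proposition 1.4 \cite{Hol04}, which asserts that any projectively resolving class of modules that is closed under arbitrary direct sums is automatically closed under direct summands. So the whole task reduces to verifying these two hypotheses for $\mathcal{GF}_{\mathcal{(X,Y)}}(R)$ under the standing assumption that it is closed under extensions. No genuinely new argument is needed beyond checking that the inputs to Holm's result are available.

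For the first hypothesis, I would invoke Proposition \ref{prop2.1}: once $\mathcal{GF}_{\mathcal{(X,Y)}}(R)$ is closed under extensions it is projectively resolving, i.e. closed under extensions and under kernels of epimorphisms, and it contains all projectives (the last point because $\mathcal{P}\subseteq\mathcal{X}\subseteq\mathcal{GF}_{\mathcal{(X,Y)}}(R)$, using the standing hypothesis $\mathcal{P}\subseteq\mathcal{X}$ together with Remark \ref{rem2.1}(1)). For the second hypothesis, Remark \ref{rem2.2}(1) gives that $\mathcal{GF}_{\mathcal{(X,Y)}}(R)$ is closed under direct sums as soon as $\mathcal{X}$ is; in particular it is closed under the countable direct sums that the swindle below requires. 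With both hypotheses in hand, Proposition 1.4 of \cite{Hol04} applies directly and yields the conclusion.

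If one prefers a self-contained argument in place of the citation, the core is the Eilenberg swindle. Suppose $M=A\oplus B\in\mathcal{GF}_{\mathcal{(X,Y)}}(R)$ and set $P=\bigoplus_{n\geqslant1}M$, which lies in $\mathcal{GF}_{\mathcal{(X,Y)}}(R)$ by closure under direct sums. Regrouping the summands of $A\oplus P$ gives $A\oplus P\cong\bigoplus_{n\geqslant1}(A\oplus B)\cong P$, so the split exact sequence
$$0\rightarrow A\rightarrow A\oplus P\rightarrow P\rightarrow 0$$
has both its middle term $A\oplus P\cong P$ and its right-hand term $P$ in $\mathcal{GF}_{\mathcal{(X,Y)}}(R)$. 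Since the class is projectively resolving, hence closed under kernels of epimorphisms, the kernel $A$ lies in $\mathcal{GF}_{\mathcal{(X,Y)}}(R)$, as desired. The only point requiring real attention is that closure under direct summands is not formal from closure under extensions alone: one genuinely needs closure under (countable) direct sums, which is exactly why the hypothesis that $\mathcal{X}$ be closed under direct sums—carried through Remark \ref{rem2.2}(1)—cannot be dispensed with. Everything else is routine bookkeeping.
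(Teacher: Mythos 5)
Your proof is correct and is essentially the paper's own argument: the paper obtains this corollary precisely by combining \cite[Proposition 1.4]{Hol04} with Proposition \ref{prop2.1} (closure under extensions gives projectively resolving) and Remark \ref{rem2.2}(1) (closure under direct sums), and your Eilenberg-swindle paragraph simply unpacks the proof of Holm's proposition. Your remark that closure of $\mathcal{X}$ under direct sums is genuinely needed and enters through Remark \ref{rem2.2}(1) --- even though it is not among the corollary's stated hypotheses --- applies equally to the paper's proof, which relies on it silently.
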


\begin{proposition}\label{prop2.2} Assume that $(\mathcal{X},\mathcal{Y})$ is a perfect duality pair. Then the following conditions are equivalent for a ring $R$:

$\mathrm{(1)}$ The class $\mathcal{GF}_{\mathcal{(X,Y)}}(R)$ is closed under extensions.

$\mathrm{(2)}$ For every short exact sequence of left $R$-modulles $0\rightarrow G_{1}\rightarrow G_{0}\rightarrow M\rightarrow 0$, where $G_{0}, G_{1}\in \mathcal{GF}_{\mathcal{(X,Y)}}(R)$. If $\Tor_{1}^{R}(Y, M)=0$ for all $Y\in \mathcal{Y}$, then $M\in \mathcal{GF}_{\mathcal{(X,Y)}}(R)$.

\end{proposition}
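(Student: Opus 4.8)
The plan is to treat the two implications separately, translating each through the characterizations in Lemma \ref{lem2.3}. Note first that since $(\mathcal{X},\mathcal{Y})$ is a perfect duality pair, $\mathcal{X}$ is closed under extensions, so Lemma \ref{lem2.4} is available throughout.

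For $(2)\Rightarrow(1)$ I would show directly that $\mathcal{GF}_{(\mathcal{X},\mathcal{Y})}(R)$ is closed under extensions. Take $0\to A\to B\to C\to 0$ with $A,C\in\mathcal{GF}_{(\mathcal{X},\mathcal{Y})}(R)$. Using $C\in\mathcal{GF}_{(\mathcal{X},\mathcal{Y})}(R)$ and Lemma \ref{lem2.3}, choose a $\mathcal{Y}\otimes_{R}-$ exact sequence $0\to C'\to X\to C\to 0$ with $X\in\mathcal{X}$ and $C'\in\mathcal{GF}_{(\mathcal{X},\mathcal{Y})}(R)$ (a first syzygy of an $\mathcal{X}$-resolution). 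Forming the pullback of $B\to C$ and $X\to C$ produces a module $E$ sitting in $0\to A\to E\to X\to 0$ and $0\to C'\to E\to B\to 0$. From the first sequence, $A\in\mathcal{GF}_{(\mathcal{X},\mathcal{Y})}(R)$ and $X\in\mathcal{X}$, so Lemma \ref{lem2.4} gives $E\in\mathcal{GF}_{(\mathcal{X},\mathcal{Y})}(R)$. The Tor long exact sequence of $0\to A\to B\to C\to 0$ with $\Tor_{1}^{R}(Y,A)=\Tor_{1}^{R}(Y,C)=0$ gives $\Tor_{1}^{R}(Y,B)=0$ for all $Y\in\mathcal{Y}$, so applying hypothesis $(2)$ to $0\to C'\to E\to B\to 0$ yields $B\in\mathcal{GF}_{(\mathcal{X},\mathcal{Y})}(R)$.

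For $(1)\Rightarrow(2)$, assume $(1)$ and take $0\to G_{1}\to G_{0}\to M\to 0$ with $G_{0},G_{1}\in\mathcal{GF}_{(\mathcal{X},\mathcal{Y})}(R)$ and $\Tor_{1}^{R}(Y,M)=0$ for all $Y$. First the Tor long exact sequence together with $\Tor_{\geqslant 1}^{R}(Y,G_{0})=\Tor_{\geqslant 1}^{R}(Y,G_{1})=0$ (Lemma \ref{lem2.3}) upgrades the hypothesis to $\Tor_{i}^{R}(Y,M)=0$ for all $i\geqslant 1$; in particular $0\to G_{1}\to G_{0}\to M\to 0$ is $\mathcal{Y}\otimes_{R}-$ exact. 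By Lemmas \ref{lem2.2} and \ref{lem2.3} it then suffices to build a $\mathcal{Y}\otimes_{R}-$ exact $\mathcal{GF}_{(\mathcal{X},\mathcal{Y})}(R)$-coresolution of $M$. Using $G_{0}\in\mathcal{GF}_{(\mathcal{X},\mathcal{Y})}(R)$, pick a $\mathcal{Y}\otimes_{R}-$ exact $0\to G_{0}\to X^{0}\to C\to 0$ with $X^{0}\in\mathcal{X}$, $C\in\mathcal{GF}_{(\mathcal{X},\mathcal{Y})}(R)$, and push out along $G_{0}\twoheadrightarrow M$. This yields $0\to M\to P\to C\to 0$, which embeds $M$ with Gorenstein-flat cokernel, and $0\to G_{1}\to X^{0}\to P\to 0$; a Tor-computation from $\Tor_{1}^{R}(Y,M)=\Tor_{1}^{R}(Y,C)=0$ shows both rows are $\mathcal{Y}\otimes_{R}-$ exact and $\Tor_{i}^{R}(Y,P)=0$ for all $i\geqslant 1$. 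The whole problem now reduces to showing $P\in\mathcal{GF}_{(\mathcal{X},\mathcal{Y})}(R)$: granting this, splicing $0\to M\to P\to C\to 0$ with a coresolution of $C$ gives the required $\mathcal{GF}_{(\mathcal{X},\mathcal{Y})}(R)$-coresolution of $M$. To get $P\in\mathcal{GF}_{(\mathcal{X},\mathcal{Y})}(R)$ I would invoke Theorem \ref{the2.1}: splicing $0\to G_{1}\to X^{0}\to P\to 0$ with a $\mathcal{GF}_{(\mathcal{X},\mathcal{Y})}(R)$-resolution of $G_{1}$ supplies the left half of a $\mathcal{Y}\otimes_{R}-$ exact complete $\mathcal{GF}_{(\mathcal{X},\mathcal{Y})}(R)$-resolution of $P$, so only the right half remains, after which $P\in\mathcal{GF}^{2}_{(\mathcal{X},\mathcal{Y})}(R)=\mathcal{GF}_{(\mathcal{X},\mathcal{Y})}(R)$.

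The hard part will be exactly this last right-hand coresolution: embedding the cokernel $M$ (equivalently $P$) into a module of $\mathcal{X}$ with Gorenstein-flat, $\mathcal{Y}\otimes_{R}-$ exact cokernel. Every natural pullback or pushout presents $M$ only as a quotient, and neither the closure properties alone nor a naive comparison/horseshoe argument (which would require $\Ext^{1}_{R}(\mathcal{GF}_{(\mathcal{X},\mathcal{Y})}(R),\mathcal{X})=0$, not available in the flat setting) produces the embedding. This is precisely where the perfect duality pair is essential, and I expect to pass to character modules: the adjunction isomorphism $\Tor_{1}^{R}(Y,M)^{+}\cong\Ext^{1}_{R}(Y,M^{+})$ converts $\Tor_{1}^{R}(Y,M)=0$ into $\Ext^{1}_{R}(Y,M^{+})=0$ for all $Y\in\mathcal{Y}$, while the duality pair carries the $\mathcal{X}$-coresolutions of $G_{0},G_{1}$ to $\mathcal{Y}$-resolutions of $G_{0}^{+},G_{1}^{+}$; the dual sequence $0\to M^{+}\to G_{0}^{+}\to G_{1}^{+}\to 0$ then presents $M^{+}$ as a subobject to which the injective-type (Ext) closure applies, and translating back—using that $(\mathcal{X},\mathcal{X}^{\bot})$ is a perfect cotorsion pair by Lemma \ref{lem1.1} and that $\mathcal{X}$ contains all flat modules by Lemma \ref{lem1.2}—should yield the desired embedding of $M$. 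Checking that this dual passage stays $\mathcal{Y}\otimes_{R}-$ exact, and that the extension-closure of $\mathcal{GF}_{(\mathcal{X},\mathcal{Y})}(R)$ keeps each cosyzygy Gorenstein $(\mathcal{X},\mathcal{Y})$-flat, is the technical heart of the argument.
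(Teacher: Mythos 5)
Your direction $(2)\Rightarrow(1)$ is correct, and it is in substance the paper's own argument: the paper forms the same pullback and then certifies that the pullback lies in $\mathcal{GF}_{\mathcal{(X,Y)}}(R)$ by an explicit second pushout, which is exactly the content of Lemma \ref{lem2.4}; citing that lemma is legitimate here because perfectness of $(\mathcal{X},\mathcal{Y})$ makes $\mathcal{X}$ closed under extensions. Your reduction in $(1)\Rightarrow(2)$ (upgrading $\Tor_{1}^{R}(Y,M)=0$ to all degrees, pushing out $0\rightarrow G_{0}\rightarrow X^{0}\rightarrow C\rightarrow 0$ along $G_{0}\rightarrow M$, and reducing everything to embedding $M$, equivalently $P$, into a module of $\mathcal{X}$ with Gorenstein $(\mathcal{X},\mathcal{Y})$-flat cokernel) is also sound. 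But the proof stops exactly where the statement stops being formal: the embedding step, which you yourself label the technical heart, is never carried out, and the route you sketch for it does not work. Dualizing the given presentation to $0\rightarrow M^{+}\rightarrow G_{0}^{+}\rightarrow G_{1}^{+}\rightarrow 0$ gives nothing usable: $G_{0}^{+}$ and $G_{1}^{+}$ need not lie in $\mathcal{Y}$, since the duality-pair axiom converts membership in $\mathcal{X}$ (not in $\mathcal{GF}_{\mathcal{(X,Y)}}(R)$) into membership in $\mathcal{Y}$; the ``injective-type (Ext) closure'' you want to apply to $M^{+}$ is a dual Gorenstein statement established nowhere in the paper and requiring its own proof; and, most fatally, there is no mechanism to descend from a property of $M^{+}$ back to one of $M$ --- Proposition \ref{prop2.2} assumes only a perfect duality pair, not a symmetric one, so no character-module characterization of $\mathcal{GF}_{\mathcal{(X,Y)}}(R)$ is available, and $M\rightarrow M^{++}$ is merely a pure embedding.

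What the paper actually does is apply the character-module trick not to $M$ but to an auxiliary module engineered so that the trick succeeds. Embed $G_{1}\hookrightarrow X_{1}\in\mathcal{X}$ with cokernel $H\in\mathcal{GF}_{\mathcal{(X,Y)}}(R)$ and push out along $G_{1}\rightarrow G_{0}$: the resulting $U$ is an extension of $H$ by $G_{0}$, hence lies in $\mathcal{GF}_{\mathcal{(X,Y)}}(R)$ by hypothesis $(1)$ (this is where $(1)$ enters). Then embed $U\hookrightarrow X\in\mathcal{X}$ with cokernel $G\in\mathcal{GF}_{\mathcal{(X,Y)}}(R)$ and push out along $U\rightarrow M$, obtaining a module $V$ that sits simultaneously in $0\rightarrow M\rightarrow V\rightarrow G\rightarrow 0$ and in $0\rightarrow X_{1}\rightarrow X\rightarrow V\rightarrow 0$. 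The first sequence, together with the hypothesis $\Tor_{1}^{R}(Y,M)=0$ and Lemma \ref{lem2.3} for $G$, gives $\Tor_{1}^{R}(Y,V)=0$ for all $Y\in\mathcal{Y}$; dualizing the second gives $0\rightarrow V^{+}\rightarrow X^{+}\rightarrow X_{1}^{+}\rightarrow 0$ with $X^{+},X_{1}^{+}\in\mathcal{Y}$, and since $\Ext_{R}^{1}(X_{1}^{+},V^{+})\cong(\Tor_{1}^{R}(X_{1}^{+},V))^{+}=0$, this sequence splits; so $V^{+}$ is a direct summand of $X^{+}$, hence $V^{+}\in\mathcal{Y}$ (as $\mathcal{Y}$ is closed under summands) and therefore $V\in\mathcal{X}$. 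Lemma \ref{lem2.3} applied to $0\rightarrow M\rightarrow V\rightarrow G\rightarrow 0$ then yields $M\in\mathcal{GF}_{\mathcal{(X,Y)}}(R)$. (Running the same two-pushout construction with your $P$ in place of $M$ would complete your version as well.) The idea you are missing is that the splitting argument needs the embedding target $V$ presented as the cokernel of a monomorphism between two modules of $\mathcal{X}$, and that presentation must be manufactured by the pushouts; it cannot be read off from the dual of the original sequence.
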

\begin{proof}(1)$\Rightarrow$ (2) Since $G_{1}$ is in $\mathcal{GF}_{\mathcal{(X,Y)}}(R)$, there exists a short exact sequence $0\rightarrow G_{1}\rightarrow X_{1}\rightarrow H\rightarrow 0$, where $X_{1} \in \mathcal{X}$, $G_{1}\in \mathcal{GF}_{\mathcal{(X,Y)}}(R)$. Consider the following pushout diagram
\begin{center}
$\xymatrix{&  0 \ar[d]_{ }  & 0   \ar[d]_{ }  &     &  \\
0 \ar[r]^{} & G_{1} \ar[d]_{ }  \ar[r]^{ } &G_{0}  \ar[d]_{ } \ar[r]^{ } & M  \ar[r]^{ }\ar@{=}[d]^{}  &0 \\
0 \ar[r]^{} & X_{1}  \ar[d]_{ }\ar[r]^{ } &U\ar[d]_{} \ar[r]^{} & M \ar[r]^{ } &0 \\
&   H \ar[d]_{ }  \ar@{=}[r]^{}             &H \ar[d]_{}&                &  \\
 &0              &0             & &   }$
\end{center}
In the middle column both $G_{0}$ and $H$ are in $\mathcal{GF}_{\mathcal{(X,Y)}}(R)$, then so is $U$ by (1). Hence, there exists a short exact sequence $0\rightarrow U\rightarrow X\rightarrow G\rightarrow 0$, where $X \in \mathcal{X}$, $G\in \mathcal{GF}_{\mathcal{(X,Y)}}(R)$. Consider the following pushout diagram
$$\xymatrix{&& 0   \ar[d]_{ }  & 0 \ar[d]_{ }    &  \\
0 \ar[r]^{} &X_{1} \ar@{=}[d]  \ar[r]^{ } &U  \ar[d]_{ } \ar[r]^{ }
& M\ar[d]_{ } \ar[r]^{ } &0 \\
0 \ar[r]^{} &X_{1}\ar[r]^{ } &X\ar[d]_{} \ar[r]^{} & V \ar[d]_{ } \ar[r]^{ } &0 \\
& &G \ar[d]_{} \ar@{=}[r]^{}& G \ar[d]_{ } &  \\
&       &0   & 0   &   }$$
We prove that $V$ is in $\mathcal{X}$. Consider the right column $0\rightarrow M\rightarrow V\rightarrow G\rightarrow 0$. Since $G$ is in $\mathcal{GF}_{\mathcal{(X,Y)}}(R)$, we get that $\Tor_{1}^{R}(Y, V)=0$ for all $Y\in \mathcal{Y}$ by Lemma \ref{lem2.3}. On the other hand, by the middle row $0\rightarrow X_{1}\rightarrow X\rightarrow V\rightarrow 0$, we have the short exact sequence $0\rightarrow V^{+}\rightarrow X^{+}\rightarrow X_{1}^{+}\rightarrow 0$. By assumption, $X^{+}, X_{1}^{+}\in \mathcal{Y}$. Note that $\Ext^{1}_{R}(X_{1}^{+}, V^{+})\cong(\Tor_{1}^{R}(X_{1}^{+}, V))^{+}$. Hence, $\Ext^{1}_{R}(X_{1}^{+}, V^{+})=0$, and so the above sequence splits. Then $V^{+}$ is in $\mathcal{Y}$, which implies that $V$ is in
$\mathcal{X}$. Now, by Lemma \ref{lem2.3} and the short exact sequence
$0\rightarrow M\rightarrow V\rightarrow G\rightarrow 0$, we get that $M$ is in $\mathcal{GF}_{\mathcal{(X,Y)}}(R)$.

(2)$\Rightarrow$ (1) Let $0\rightarrow L\rightarrow M\rightarrow N\rightarrow 0$ be a short exact sequence, where $L,N\in \mathcal{GF}_{\mathcal{(X,Y)}}(R)$. We show that $M$ is in $\mathcal{GF}_{\mathcal{(X,Y)}}(R)$. Since $L$ and $N$ are in $\mathcal{GF}_{\mathcal{(X,Y)}}(R)$, we get that $\Tor_{i}^{R}(Y, M)=0$ for all $i>0$ and all $Y\in \mathcal{Y}$ by Lemma \ref{lem2.3}. On the other hand, since $N$ is in $\mathcal{GF}_{\mathcal{(X,Y)}}(R)$, there exists a short exact sequence $0\rightarrow G\rightarrow X\rightarrow N\rightarrow 0$, where $X\in \mathcal{X}$ and $G\in \mathcal{GF}_{\mathcal{(X,Y)}}(R)$. Consider the following pullback diagram
$$\xymatrix{& & 0   \ar[d]_{ }& 0 \ar[d]_{ }  &  \\
&   &G \ar[d]_{} \ar@{=}[r]^{}& G \ar[d]_{ }  &  \\
0 \ar[r]^{} & L \ar@{=}[d]  \ar[r]^{ } &W \ar[d]_{ } \ar[r]^{ } & X \ar[d]_{ } \ar[r]^{ } &0 \\
0 \ar[r]^{} & L \ar[r]^{ } &M\ar[d]_{} \ar[r]^{} & N \ar[d]_{ } \ar[r]^{ } &0 \\
 & &0       & 0 &   }$$
Since $L$ is in $\mathcal{GF}_{\mathcal{(X,Y)}}(R)$, there exists a short exact sequence $0\rightarrow L\rightarrow X'\rightarrow G'\rightarrow 0$, where $X'\in \mathcal{X}$ and $G'\in \mathcal{GF}_{\mathcal{(X,Y)}}(R)$. Consider the following pushout diagram
$$\xymatrix{&  0 \ar[d]_{ }& 0   \ar[d]_{ }   &      &  \\
0 \ar[r]^{} & L \ar[d]_{ }  \ar[r]^{ } &W  \ar[d]_{ } \ar[r]^{ } & X  \ar[r]^{ }\ar@{=}[d]^{}  &0 \\
0 \ar[r]^{} & X'  \ar[d]_{ }  \ar[r]^{ } &U\ar[d]_{} \ar[r]^{} & X \ar[r]^{ } &0 \\
&    G' \ar[d]_{ }  \ar@{=}[r]^{}    &G' \ar[d]_{}&       &  \\
 &0    &0     & &   }$$
 In the middle row both $X'$ and $X$ are in $\mathcal{X}$, then so is $U$. Hence, by the middle column and Lemma \ref{lem2.3}, $W$ is in $\mathcal{GF}_{\mathcal{(X,Y)}}(R)$. Now, in the short exact sequence $0\rightarrow G\rightarrow W\rightarrow M\rightarrow 0$, we note that $G$ and $W$ are in $\mathcal{GF}_{\mathcal{(X,Y)}}(R)$, and $\Tor_{i}^{R}(Y, M)=0$ for all $i>0$ and all $Y\in \mathcal{Y}$, then $M$ is in $\mathcal{GF}_{\mathcal{(X,Y)}}(R)$ by (2).
\end{proof}

Let $\mathcal{A}$ be a class of left $R$-modules, $M$ be a left $R$-module.  For a positive integer $n$, we say that $M$ has $\mathcal{A}$-projective dimension at most $n$, if $M$ has an $\mathcal{A}$-resolution of length $n$, and we write $\mathcal{A}$-$pd(M)\leqslant n$. If no such finite sequence exists, define $\mathcal{A}$-$pd(M)=\infty$. If $\mathcal{A}=\mathcal{GF}_{\mathcal{(X,Y)}}(R)$, then we call it Gorenstein $(\mathcal{X},\mathcal{Y})$-flat dimension of $M$. Dually, we have the notion of $\mathcal{A}$-injective dimension $\mathcal{A}$-$id(M)$.

Recall that in \cite{SWW08} a subclass $\mathcal{U}$ of $\mathcal{X}$ is called a generator (resp. cogenerator) for $\mathcal{X}$ if for each module in $\mathcal{X}$, there exists an exact sequence $0\rightarrow X'\rightarrow U\rightarrow X\rightarrow 0$ (resp. $0\rightarrow X\rightarrow U\rightarrow X'\rightarrow 0$) in $R$-Mod with $X'$ in $\mathcal{X}$ and $U$ in $\mathcal{U}$. By definitions, it is clear that $\mathcal{X}$ is a generator-cogenerator for $\mathcal{GF}_{\mathcal{(X,Y)}}(R)$.

\begin{theorem}\label{the2.2} Assume that $(\mathcal{X},\mathcal{Y})$ is a perfect duality pair, and $\mathcal{GF}_{\mathcal{(X,Y)}}(R)$ is closed under extensions. Then the following are equivalent for a module $M$ and a positive integer $n$:

$\mathrm{(1)}$ $\mathcal{GF}_{\mathcal{(X,Y)}}(R)$-$pd(M)\leqslant n$.

$\mathrm{(2)}$ $\mathcal{GF}_{\mathcal{(X,Y)}}(R)$-$pd(M)<\infty$ and $\Tor_{i}^{R}(Y, M)=0$ for all $i>n$ and each $Y\in \mathcal{Y}$.

$\mathrm{(3)}$ $\mathcal{GF}_{\mathcal{(X,Y)}}(R)$-$pd(M)<\infty$ and $\Tor_{i}^{R}(Z, M)=0$ for all $i>n$ and each $Z$ with $\mathcal{Y}$-$id(Z)<\infty$.

$\mathrm{(4)}$ For every exact sequence of left $R$-modules $0\rightarrow K_{n}\rightarrow G_{n-1}\rightarrow\cdots \rightarrow G_{1}\rightarrow G_{0}\rightarrow M\rightarrow 0$, if each $G_{i}$ is in $\mathcal{GF}_{\mathcal{(X,Y)}}(R)$, then so is $K_{n}$.

$\mathrm{(5)}$ There exists an exact sequence:
$$0\rightarrow G_{n}\rightarrow X_{n-1}\rightarrow\cdots \rightarrow X_{1}\rightarrow X_{0}\rightarrow M\rightarrow 0$$
with $G_{n}\in \mathcal{GF}_{\mathcal{(X,Y)}}(R)$ and all $X_{i}\in \mathcal{X}$.

$\mathrm{(6)}$ There exists an exact sequence:
$$0\rightarrow X_{n}\rightarrow X_{n-1}\rightarrow\cdots \rightarrow X_{1}\rightarrow G_{0}\rightarrow M\rightarrow 0$$
with $G_{0}\in \mathcal{GF}_{\mathcal{(X,Y)}}(R)$ and all $X_{i}\in \mathcal{X}$.

$\mathrm{(7)}$ For every non-negative integer $t$ such that $0\leqslant t\leqslant n$, there exists an exact sequence:
$$0\rightarrow X_{n}\rightarrow X_{n-1}\rightarrow\cdots \rightarrow X_{1}\rightarrow X_{0}\rightarrow M\rightarrow 0$$
with $X_{t}\in \mathcal{GF}_{\mathcal{(X,Y)}}(R)$ and all $X_{i}\in \mathcal{X}$ for $i\neq t$.

$\mathrm{(8)}$ For every non-negative integer $t$ such that $0\leqslant t\leqslant n$, there exists an exact sequence:
$$0\rightarrow G_{n}\rightarrow G_{n-1}\rightarrow\cdots \rightarrow G_{1}\rightarrow G_{0}\rightarrow M\rightarrow 0$$
with $G_{t}\in\mathcal{X}$ and all $G_{i}\in \mathcal{GF}_{\mathcal{(X,Y)}}(R)$ for $i\neq t$.
\end{theorem}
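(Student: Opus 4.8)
The plan is to make condition $(1)$ the hub, first proving the four homological characterizations $(1)$–$(4)$ equivalent and then reading off the resolution-shape conditions $(5)$–$(8)$. Throughout I will use that, by Proposition \ref{prop2.1} together with the standing hypothesis, $\mathcal{GF}_{\mathcal{(X,Y)}}(R)$ is projectively resolving and closed under direct summands, and that by Lemma \ref{lem1.2} the class $\mathcal{X}$ contains all projectives, so every projective resolution is a $\mathcal{GF}_{\mathcal{(X,Y)}}(R)$-resolution and $\mathcal{X}\subseteq\mathcal{GF}_{\mathcal{(X,Y)}}(R)$. First I would settle $(1)\Leftrightarrow(4)$: the implication $(4)\Rightarrow(1)$ follows by truncating a projective resolution of $M$ at stage $n$, while $(1)\Rightarrow(4)$ is a Schanuel-type comparison of an arbitrary partial $\mathcal{GF}_{\mathcal{(X,Y)}}(R)$-resolution with one of length $n$, which is precisely Corollary \ref{cor2.3}. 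In particular this yields the dimension-shifting principle that an $i$-th syzygy of a module of Gorenstein $(\mathcal{X},\mathcal{Y})$-flat dimension $\leq n$ has dimension $\leq n-i$, which I will invoke repeatedly below.

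For the $\Tor$ conditions, $(1)\Rightarrow(2)$ is dimension shifting in the first variable: each module in $\mathcal{GF}_{\mathcal{(X,Y)}}(R)$ has $\Tor_i^R(Y,-)=0$ for $i>0$ and $Y\in\mathcal{Y}$ by Lemma \ref{lem2.3}, so a length-$n$ resolution forces $\Tor_i^R(Y,M)=0$ for $i>n$. The equivalence $(2)\Leftrightarrow(3)$ is trivial in one direction (every $Y\in\mathcal{Y}$ has $\mathcal{Y}$-$id=0$) and, in the other, an induction on $\mathcal{Y}$-$id(Z)$ using the long exact sequence of $\Tor^R(-,M)$ along a finite $\mathcal{Y}$-coresolution of $Z$.

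The crux is $(2)\Rightarrow(1)$. Using $\mathcal{GF}_{\mathcal{(X,Y)}}(R)$-$pd(M)<\infty$, I would pass to the $n$-th syzygy $K_n$ of a projective resolution of $M$; it has finite Gorenstein $(\mathcal{X},\mathcal{Y})$-flat dimension, and the shift $\Tor_i^R(Y,K_n)\cong\Tor_{i+n}^R(Y,M)=0$ for $i>0$ shows that all higher $\Tor$ of $K_n$ against $\mathcal{Y}$ vanish. It then suffices to prove the claim that a module of finite Gorenstein $(\mathcal{X},\mathcal{Y})$-flat dimension with $\Tor_i^R(Y,-)=0$ for all $i>0$ and all $Y\in\mathcal{Y}$ already lies in $\mathcal{GF}_{\mathcal{(X,Y)}}(R)$. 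I would prove this by induction on the dimension: choosing $0\to L\to P\to K_n\to 0$ with $P$ projective and $\mathcal{GF}_{\mathcal{(X,Y)}}(R)$-$pd(L)$ one smaller, the $\Tor$-vanishing passes to $L$, so $L\in\mathcal{GF}_{\mathcal{(X,Y)}}(R)$ by induction, and then Proposition \ref{prop2.2} upgrades $K_n$ itself into $\mathcal{GF}_{\mathcal{(X,Y)}}(R)$. Once $K_n\in\mathcal{GF}_{\mathcal{(X,Y)}}(R)$, the truncated resolution has length $n$ and gives $(1)$. I expect this to be the main obstacle, since it is where the perfect-duality-pair hypothesis does its decisive work, through the character-module argument packaged in Proposition \ref{prop2.2}.

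Finally, the reverse implications $(5)\Rightarrow(1)$, $(6)\Rightarrow(1)$, $(7)\Rightarrow(1)$ and $(8)\Rightarrow(1)$ are all immediate, because $\mathcal{X}\subseteq\mathcal{GF}_{\mathcal{(X,Y)}}(R)$ makes each listed sequence a $\mathcal{GF}_{\mathcal{(X,Y)}}(R)$-resolution of length $n$. For the forward directions I would start from $(1)$ and the truncated projective resolution $0\to G_n\to X_{n-1}\to\cdots\to X_0\to M\to 0$ with $G_n\in\mathcal{GF}_{\mathcal{(X,Y)}}(R)$ (by $(1)\Leftrightarrow(4)$) and the $X_i$ projective, which is exactly $(5)$, equivalently $(7)$ at $t=n$. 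To relocate the single $\mathcal{GF}_{\mathcal{(X,Y)}}(R)$-term to an arbitrary position $t$, I would slide it one step at a time by pushouts built from the fact that $\mathcal{X}$ is a generator and cogenerator for $\mathcal{GF}_{\mathcal{(X,Y)}}(R)$ (as noted before the theorem), using Lemma \ref{lem2.4} and extension-closure to keep the moved term in $\mathcal{GF}_{\mathcal{(X,Y)}}(R)$ and each freshly created kernel in $\mathcal{X}$; this produces $(7)$ for every $t$, and hence $(6)$ as the case $t=0$. Condition $(8)$ I would obtain directly: resolve $M$ by projectives for $t$ steps, insert an $\mathcal{X}$-approximation of the $t$-th syzygy at position $t$, and cap with a $\mathcal{GF}_{\mathcal{(X,Y)}}(R)$-resolution of the remaining syzygy, whose length is controlled by the dimension-shifting from $(1)\Leftrightarrow(4)$. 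The surgery here is routine but bookkeeping-heavy, so I would isolate it as a single term-relocation lemma and invoke it uniformly for each $t$.
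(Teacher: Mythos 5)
Your proposal is correct. For $(1)\Leftrightarrow(2)\Leftrightarrow(3)\Leftrightarrow(4)$ it coincides with the paper's own proof, which invokes exactly the same three ingredients---Lemma \ref{lem2.3}, Corollary \ref{cor2.3} and Proposition \ref{prop2.2}---and defers to the argument of \cite[Theorem 2.8]{Ben09}; your treatment of $(2)\Rightarrow(1)$ (pass to the $n$-th syzygy, induct on the finite Gorenstein $(\mathcal{X},\mathcal{Y})$-flat dimension, and use Proposition \ref{prop2.2} as the upgrading step) is precisely that argument written out. The genuine divergence is in $(1)\Leftrightarrow(5)\Leftrightarrow(6)\Leftrightarrow(7)\Leftrightarrow(8)$: the paper settles these in one line by citing the general result \cite[Theorem 5.5]{Hu13}, on the strength of the observation that $\mathcal{X}$ is a generator-cogenerator for $\mathcal{GF}_{\mathcal{(X,Y)}}(R)$, whereas you re-derive the relocation of the single Gorenstein term by hand via pushouts. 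Your route buys self-containedness (no appeal to Huang's machinery) at the cost of the bookkeeping you acknowledge; the paper's route is shorter and makes clear that only the generator-cogenerator property is being used. One caution about your sliding lemma: the naive pushout of $X_{k-1}\leftarrow U\rightarrow X^{U}$, where $0\rightarrow U\rightarrow X^{U}\rightarrow G^{U}\rightarrow 0$ is a cogenerator sequence, moves the $\mathcal{GF}_{\mathcal{(X,Y)}}(R)$-term down one slot only when $U$ is the leftmost term of the resolution, so that $U\rightarrow X_{k-1}$ is monic; once the term sits in the interior, the map out of it is not injective, and you must push out along that non-injective map, extracting from the pushout both $0\rightarrow X_{k-1}\rightarrow V\rightarrow G^{U}\rightarrow 0$ (whence $V\in\mathcal{GF}_{\mathcal{(X,Y)}}(R)$ by extension-closure) and the four-term exact sequence $0\rightarrow \Ker(U\rightarrow X_{k-1})\rightarrow X^{U}\rightarrow V\rightarrow \Coker(U\rightarrow X_{k-1})\rightarrow 0$, which re-splices into the resolution on both sides. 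This works, and your shortcut for $(8)$ (projectives up to step $t$, then a length-$(n-t-1)$ Gorenstein resolution of the next syzygy, legitimate by $(1)\Leftrightarrow(4)$) is also fine, so there is no gap---only a step whose details are more delicate than the word ``slide'' suggests.
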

\begin{proof}By Lemma \ref{lem2.3}, Corollary \ref{cor2.3}, and Proposition \ref{prop2.2},  (1)$\Leftrightarrow$(2)$\Leftrightarrow$(3)$\Leftrightarrow$(4) are obtained by the similar proof of \cite[Theorem 2.8]{Ben09}. Since $\mathcal{X}$ is a generator-cogenerator for $\mathcal{GF}_{\mathcal{(X,Y)}}(R)$, by \cite[Theorem 5.5]{Hu13}, we get  (1)$\Leftrightarrow$(5)$\Leftrightarrow$(6)$\Leftrightarrow$(7)$\Leftrightarrow$(8).
\end{proof}
\begin{remark}Associated with \cite[Lemma 2.3]{EIP17}, a careful reading of proofs of Proposition \ref{prop2.2} and Theorem \ref{the2.2} give the corresponding results for $\mathcal{GF}_{\mathcal{Y}}(R)$.
\end{remark}

\begin{proposition}\label{prop2.4} Assume that $\mathcal{Y}$ is injectively coresolving, and $(\mathcal{X},\mathcal{Y})$ is a perfect duality pair. If a left $R$-module $M$ is Gorenstein $(\mathcal{X},\mathcal{Y})$-flat and $\mathcal{X}$-$\pd(M)<\infty$, then it is in $\mathcal{X}$.
\end{proposition}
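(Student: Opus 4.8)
The plan is to argue by induction on $n=\mathcal{X}$-$\pd(M)$, after first extracting from the hypothesis on $\mathcal{Y}$ the structural fact that $\mathcal{X}$ is resolving. Indeed, $\mathcal{X}$ already contains $\mathcal{P}$ (our standing assumption) and is closed under extensions (by the definition of a perfect duality pair) and under direct summands (being the left-hand class of the cotorsion pair of Lemma \ref{lem1.1}(2)); what remains is closure under kernels of epimorphisms. For a short exact sequence $0\rightarrow A\rightarrow B\rightarrow C\rightarrow 0$ with $B,C\in\mathcal{X}$, applying $(-)^{+}$ yields an exact sequence of right $R$-modules $0\rightarrow C^{+}\rightarrow B^{+}\rightarrow A^{+}\rightarrow 0$ with $B^{+},C^{+}\in\mathcal{Y}$; since $\mathcal{Y}$ is injectively coresolving it is closed under cokernels of monomorphisms, so $A^{+}\in\mathcal{Y}$, and hence $A\in\mathcal{X}$ by the defining property of the duality pair. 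Thus $\mathcal{X}$ is resolving and closed under summands, so the usual dimension-shifting for $\mathcal{X}$-$\pd$ becomes available.

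For the base case $n=0$ there is nothing to prove. For $n\geqslant 1$, I would use that $M\in\mathcal{GF}_{\mathcal{(X,Y)}}(R)$ to fix a $\mathcal{Y}\otimes_{R}-$ exact exact sequence $\cdots\rightarrow X_{1}\rightarrow X_{0}\rightarrow X^{0}\rightarrow\cdots$ of modules in $\mathcal{X}$ with $M\cong\Ker(X^{0}\rightarrow X^{1})$, and set $K=\im(X_{1}\rightarrow X_{0})=\Ker(X_{0}\twoheadrightarrow M)$. By Remark \ref{rem2.1}(2) the module $K$ is again Gorenstein $(\mathcal{X},\mathcal{Y})$-flat, and the short exact sequence
$$0\rightarrow K\rightarrow X_{0}\rightarrow M\rightarrow 0$$
with $X_{0}\in\mathcal{X}$, together with the resolving property of $\mathcal{X}$, gives $\mathcal{X}$-$\pd(K)\leqslant n-1<\infty$. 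The induction hypothesis then yields $K\in\mathcal{X}$.

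It remains to promote $K\in\mathcal{X}$ to $M\in\mathcal{X}$, and this is the crux. Applying $(-)^{+}$ to the displayed sequence gives an exact sequence of right $R$-modules
$$0\rightarrow M^{+}\rightarrow X_{0}^{+}\rightarrow K^{+}\rightarrow 0,$$
where $X_{0}^{+},K^{+}\in\mathcal{Y}$. Using the natural isomorphism $\Ext_{R}^{1}(K^{+},M^{+})\cong(\Tor_{1}^{R}(K^{+},M))^{+}$ and the fact that $M$ is Gorenstein $(\mathcal{X},\mathcal{Y})$-flat (so that $\Tor_{1}^{R}(Y,M)=0$ for every $Y\in\mathcal{Y}$ by Lemma \ref{lem2.3}, applied with $Y=K^{+}$), I obtain $\Ext_{R}^{1}(K^{+},M^{+})=0$. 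Hence the sequence splits, $M^{+}$ is a direct summand of $X_{0}^{+}\in\mathcal{Y}$, and since $\mathcal{Y}$ is closed under direct summands we conclude $M^{+}\in\mathcal{Y}$, i.e. $M\in\mathcal{X}$. The main obstacle is the dimension bookkeeping in the second paragraph: it is exactly there that one needs $\mathcal{X}$ to be resolving, and the key observation making the induction run is that the hypothesis ``$\mathcal{Y}$ injectively coresolving'' is precisely what forces closure of $\mathcal{X}$ under kernels of epimorphisms via character-module duality; the final splitting step simply reuses the $\Ext$--$\Tor$ device already exploited in the proof of Proposition \ref{prop2.2}.
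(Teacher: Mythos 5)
Your proof is correct, but it is organized differently from the paper's. The paper dualizes the whole complete resolution in one shot: exactness of $\mathcal{Y}\otimes_{R}\mathbb{X}$ gives exactness of $\Hom_{R}(\mathcal{Y},\mathbb{X}^{+})$, so the truncation $0\rightarrow M^{+}\rightarrow X_{0}^{+}\rightarrow\cdots\rightarrow X_{n-1}^{+}\rightarrow K_{n}^{+}\rightarrow 0$ is a $\Hom_{R}(\mathcal{Y},-)$-exact coresolution all of whose terms, including $K_{n}^{+}$, lie in $\mathcal{Y}$; it then concludes $M^{+}\in\mathcal{Y}$ by splitting off the short pieces one at a time (the identity map on each cosyzygy lifts by $\Hom_{R}(\mathcal{Y},-)$-exactness, and $\mathcal{Y}$ is closed under direct summands). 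You instead induct on $n=\mathcal{X}$-$\pd(M)$, pass to the first syzygy $K$ of the complete resolution (Gorenstein $(\mathcal{X},\mathcal{Y})$-flat by Remark \ref{rem2.1}(2)), and obtain the splitting of the single sequence $0\rightarrow M^{+}\rightarrow X_{0}^{+}\rightarrow K^{+}\rightarrow 0$ from $\Ext_{R}^{1}(K^{+},M^{+})\cong(\Tor_{1}^{R}(K^{+},M))^{+}=0$, i.e. from Lemma \ref{lem2.3}; this is exactly the $\Ext$--$\Tor$ device of Proposition \ref{prop2.2}, traded for the paper's $\Hom_{R}(\mathcal{Y},-)$-exactness argument. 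Both proofs use the coresolving hypothesis for the same underlying purpose: the paper needs it, implicitly, in the unexplained step ``by hypothesis \dots $K_{n}^{+}\in\mathcal{Y}$,'' which is a comparison between the coresolution extracted from $\mathbb{X}^{+}$ and a finite one; you make this explicit and cleaner by showing up front that the hypothesis forces $\mathcal{X}$ to be closed under kernels of epimorphisms, hence resolving, so that dimension shifting runs on the module side. That is the real merit of your write-up: it fills in precisely the two places the paper glosses over (``by hypothesis'' and ``it is easy to see''). The one sentence you should expand is the claim $\mathcal{X}$-$\pd(K)\leqslant n-1$: since $X_{0}$ need not be projective this is not Schanuel's lemma but the standard syzygy comparison for resolving classes closed under direct summands (the analogue of \cite[Lemma 2.1]{zhu13} invoked in Corollary \ref{cor2.3}); it does hold here, e.g.\ by forming the pullback of $X_{0}\rightarrow M$ and $X_{0}'\rightarrow M$ for a finite $\mathcal{X}$-resolution $0\rightarrow X_{n}'\rightarrow\cdots\rightarrow X_{0}'\rightarrow M\rightarrow 0$ and using the closure properties you established, so the appeal is legitimate but deserves a citation or a line of proof.
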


\begin{proof}Since $M$ is Gorenstein $(\mathcal{X},\mathcal{Y})$-flat, there exists an exact sequence of left $R$-modules in $\mathcal{X}$,
$$\mathbb{X}: \cdots\rightarrow X_{1}\rightarrow X_{0}\rightarrow X^{0}\rightarrow X^{1}\rightarrow\cdots$$
such that $M\cong \Ker(X^{0}\rightarrow X^{1})$ and $Y\otimes_{R}\mathbb{X}$ is exact for each $Y\in \mathcal{Y}$. Then the exact sequence $\mathcal{Y}\otimes_{R}\mathbb{X}$ gives the exactness of $(\mathcal{Y}\otimes_{R}\mathbb{X})^{+}$, which yields the exact sequence $\Hom_{R}(\mathcal{Y}, \mathbb{X}^{+})$. Hence, by hypothesis, we have a $\Hom_{R}(\mathcal{Y}, -)$-exact exact sequence $0\rightarrow M^{+}\rightarrow X_{0}^{+}\rightarrow X_{1}^{+}\rightarrow \cdots\rightarrow X_{n-1}^{+}\rightarrow K_{n}^{+}\rightarrow 0$ with $X_{i}\in \mathcal{X}$ and $K_{n}^{+}\in \mathcal{Y}$. It is easy to see that $M^{+}\in \mathcal{Y}$, and so $M\in \mathcal{X}$.
\end{proof}

\begin{remark}Similar to the proof of Proposition \ref{prop2.4}, it is obtained the corresponding result for $\mathcal{GF}_{\mathcal{Y}}(R)$. That is, if $\mathcal{I}\subseteq\mathcal{Y}$, and $M$ is Gorenstein $\mathcal{Y}$-flat and $\fd(M)<\infty$, then $M$ is flat.
\end{remark}

\begin{proposition}\label{prop2.3} Assume that $(\mathcal{X},\mathcal{Y})$ is a perfect and symmetric duality pair, and $\mathcal{X}$ is closed under kernels of epimorphisms. Then the following conditions are equivalent for a ring $R$:

$\mathrm{(1)}$ $M$ is in $\mathcal{GF}_{\mathcal{(X,Y)}}(R)$.

$\mathrm{(2)}$ There exists an exact sequence of left $R$-modules in $\mathcal{X}$,
$$\cdots\rightarrow X_{1}\rightarrow X_{0}\rightarrow X^{0}\rightarrow X^{1}\rightarrow\cdots$$
such that $M\cong \Ker(X^{0}\rightarrow X^{1})$ and such that $\Hom_{R}(-, \mathcal{X}\cap \mathcal{X}^{\perp})$ leaves the sequence exact.

$\mathrm{(3)}$ $\Ext_{R}^{i}(M, W)=0$ for all $W\in \mathcal{X}\cap \mathcal{X}^{\perp}$ and there exists a $\Hom_{R}(-, \mathcal{X}\cap \mathcal{X}^{\perp})$-exact exact sequence
$$0\rightarrow M\rightarrow X^{0}\rightarrow X^{1}\rightarrow\cdots$$
with each $X^{i}\in \mathcal{X}$.

$\mathrm{(4)}$ $\Ext_{R}^{i}(M, W)=0$ for all $W\in \mathcal{X}\cap \mathcal{X}^{\perp}$ and there exists a $\Hom_{R}(-, \mathcal{X}\cap \mathcal{X}^{\perp})$-exact exact sequence
$$0\rightarrow M\rightarrow W^{0}\rightarrow W^{1}\rightarrow\cdots$$
with each $W^{i}\in \mathcal{X}\cap \mathcal{X}^{\perp}$.

$\mathrm{(5)}$ There exists a short exact sequence of left $R$-modules $0\rightarrow M\rightarrow W\rightarrow G\rightarrow 0$ where $W\in \mathcal{X}\cap \mathcal{X}^{\perp}$ and $G\in \mathcal{GF}_{\mathcal{(X,Y)}}(R)$.

\end{proposition}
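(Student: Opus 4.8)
The plan is to reduce everything to the cotorsion pair $(\mathcal{X},\mathcal{X}^{\perp})$ and its core $\mathcal{W}:=\mathcal{X}\cap\mathcal{X}^{\perp}$. First I would record the standing facts. By Lemma \ref{lem1.1}(2) the pair $(\mathcal{X},\mathcal{X}^{\perp})$ is a perfect, hence complete, cotorsion pair; and since $\mathcal{X}$ contains the projectives (Lemma \ref{lem1.2}), is closed under extensions, and by hypothesis is closed under kernels of epimorphisms, it is projectively resolving. Thus $(\mathcal{X},\mathcal{X}^{\perp})$ is hereditary and $\Ext_{R}^{i}(X,W)=0$ for all $X\in\mathcal{X}$, $W\in\mathcal{W}$, $i\geqslant1$. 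Moreover, completeness together with extension-closure of $\mathcal{X}$ gives, for each $X\in\mathcal{X}$, a short exact sequence $0\rightarrow X\rightarrow W\rightarrow X'\rightarrow 0$ with $W\in\mathcal{W}$ and $X'\in\mathcal{X}$; that is, $\mathcal{W}$ cogenerates $\mathcal{X}$.

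The crux, and the step I expect to be the main obstacle, is the following bridging lemma: for an exact complex $\mathbb{X}$ of modules in $\mathcal{X}$, the sequence is $\mathcal{Y}\otimes_{R}-$ exact if and only if it is $\Hom_{R}(-,\mathcal{W})$-exact. For this I would use the natural isomorphism $(Y\otimes_{R}\mathbb{X})^{+}\cong\Hom_{R}(\mathbb{X},Y^{+})$, so that $\mathcal{Y}\otimes_{R}-$ exactness is equivalent to exactness of $\Hom_{R}(\mathbb{X},Y^{+})$ for all $Y\in\mathcal{Y}$. Since $(\mathcal{X},\mathcal{Y})$ is symmetric, $Y^{+}\in\mathcal{X}$, and $\Ext_{R}^{1}(X,Y^{+})\cong(\Tor_{1}^{R}(Y,X))^{+}=0$ because $\mathcal{Y}\top\mathcal{X}$, whence $Y^{+}\in\mathcal{W}$; this already yields the direction $\Hom_{R}(-,\mathcal{W})$-exact $\Rightarrow\mathcal{Y}\otimes_{R}-$ exact. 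Conversely, given $W\in\mathcal{W}$ we have $W^{+}\in\mathcal{Y}$, hence $\Hom_{R}(\mathbb{X},W^{++})\cong(W^{+}\otimes_{R}\mathbb{X})^{+}$ is exact; the canonical pure sequence $0\rightarrow W\rightarrow W^{++}\rightarrow C\rightarrow 0$ has $C\in\mathcal{X}$ (a pure quotient, Lemma \ref{lem1.1}(1)), and since $W\in\mathcal{X}^{\perp}$ it splits, so $\Hom_{R}(\mathbb{X},W)$ is a summand of the exact complex $\Hom_{R}(\mathbb{X},W^{++})$, hence exact. Applying this bridge to the defining complex gives $(1)\Leftrightarrow(2)$ at once; applied to the $\mathcal{X}$-resolution of any $G\in\mathcal{GF}_{\mathcal{(X,Y)}}(R)$, together with dimension shifting along $\Ext_{R}^{i}(X_{j},W)=0$, it yields the auxiliary fact that every $G\in\mathcal{GF}_{\mathcal{(X,Y)}}(R)$ satisfies $\Ext_{R}^{i}(G,W)=0$ for all $W\in\mathcal{W}$, $i\geqslant1$.

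Next I would settle $(1)\Leftrightarrow(5)$. The implication $(5)\Rightarrow(1)$ is immediate from Lemma \ref{lem2.3}, since $\mathcal{W}\subseteq\mathcal{X}$. For $(1)\Rightarrow(5)$, Lemma \ref{lem2.3} gives $0\rightarrow M\rightarrow X\rightarrow G_{1}\rightarrow0$ with $X\in\mathcal{X}$, $G_{1}\in\mathcal{GF}_{\mathcal{(X,Y)}}(R)$; composing the monomorphism with a cogenerating embedding $X\hookrightarrow W$ ($W\in\mathcal{W}$, cokernel $X'\in\mathcal{X}$) produces $0\rightarrow M\rightarrow W\rightarrow G\rightarrow0$ in which $G$ sits in $0\rightarrow G_{1}\rightarrow G\rightarrow X'\rightarrow0$, so $G\in\mathcal{GF}_{\mathcal{(X,Y)}}(R)$ by Lemma \ref{lem2.4}.

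Finally I would close the loop $(1)\Rightarrow(4)\Rightarrow(3)\Rightarrow(2)\Rightarrow(1)$. For $(1)\Rightarrow(4)$, iterate $(5)$: splicing the sequences $0\rightarrow G^{i}\rightarrow W^{i+1}\rightarrow G^{i+1}\rightarrow0$ (with $G^{0}=M$, $W^{i+1}\in\mathcal{W}$) yields a coresolution $0\rightarrow M\rightarrow W^{0}\rightarrow W^{1}\rightarrow\cdots$ by modules of $\mathcal{W}$, which is $\Hom_{R}(-,\mathcal{W})$-exact because each $\Ext_{R}^{1}(G^{i+1},W)=0$ by the auxiliary fact, while the vanishing $\Ext_{R}^{i}(M,W)=0$ is that same fact. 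The step $(4)\Rightarrow(3)$ is trivial as $\mathcal{W}\subseteq\mathcal{X}$. For $(3)\Rightarrow(2)$, take a projective resolution $\cdots\rightarrow P_{1}\rightarrow P_{0}\rightarrow M\rightarrow0$ with $P_{i}\in\mathcal{P}\subseteq\mathcal{X}$; it is $\Hom_{R}(-,\mathcal{W})$-exact precisely because $\Ext_{R}^{i}(M,W)=0$, and splicing it with the coresolution of $(3)$ produces a $\Hom_{R}(-,\mathcal{W})$-exact complex of modules in $\mathcal{X}$ with $M\cong\Ker(X^{0}\rightarrow X^{1})$, which is $(2)$. The remaining $(2)\Rightarrow(1)$ is the bridging lemma once more. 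The routine dimension-shifting and splicing verifications here parallel those of Lemma \ref{lem2.3} and of \cite{Hu13,SWW08}, so the only genuinely new input is the bridging lemma of the second paragraph.
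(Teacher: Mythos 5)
Your proposal is correct and follows essentially the same route as the paper: your ``bridging lemma'' is exactly the paper's proof of $(1)\Leftrightarrow(2)$ (using $Y^{+}\in\mathcal{X}\cap\mathcal{X}^{\perp}$ for one direction and the split pure sequence $0\rightarrow W\rightarrow W^{++}\rightarrow W^{++}/W\rightarrow 0$ for the other), and your construction of the $\mathcal{X}\cap\mathcal{X}^{\perp}$-coresolution via completeness of $(\mathcal{X},\mathcal{X}^{\perp})$ and Lemma \ref{lem2.4} is the same as the paper's pushout argument for $(3)\Rightarrow(4)$. The only difference is organizational: you route the implications through $(5)$ and spell out the splicing and Ext-vanishing steps that the paper dismisses as ``straightforward,'' which is fine.
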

\begin{proof}(1)$\Rightarrow$ (2) By (1), there exists an exact sequence of left $R$-modules in $\mathcal{X}$,
$$\cdots\rightarrow X_{1}\rightarrow X_{0}\rightarrow X^{0}\rightarrow X^{1}\rightarrow\cdots$$
such that $M\cong \Ker(X^{0}\rightarrow X^{1})$ and such that $\mathcal{Y}\otimes_{R}-$ leaves the sequence exact. Let $W$ be in $\mathcal{X}\cap \mathcal{X}^{\perp}$. Then we have a pure exact sequence sequence $0\rightarrow W\rightarrow W^{++}\rightarrow W^{++}/W\rightarrow 0$
in $R$-Mod by \cite[Proposition 5.3.9]{EJ00}. Since $W\in \mathcal{X}$ and $(\mathcal{X},\mathcal{Y})$ is a symmetric duality pair, we get $W^{++}\in \mathcal{X}$. Thus $W^{++}/W\in \mathcal{X}$, and so the above pure exact sequence is split. From the fact that $\Hom_{R}(\mathbb{X}, W^{++})\cong(W^{+}\otimes_{R}\mathbb{X})^{+}$, and $W^{+}\otimes_{R}\mathbb{X}$ is exact by hypothesis, we obtain that the sequence $\Hom_{R}(\mathbb{X}, W^{++})$ is exact. Therefore, $\Hom_{R}(\mathbb{X}, W)$ is exact.

(2)$\Rightarrow$ (1) By (2), there exists an exact sequence of left $R$-modules in $\mathcal{X}$,
$$\mathbb{X}:\cdots\rightarrow X_{1}\rightarrow X_{0}\rightarrow X^{0}\rightarrow X^{1}\rightarrow\cdots$$
such that $M\cong \Ker(X^{0}\rightarrow X^{1})$ and such that $\Hom_{R}(-, \mathcal{X}\cap \mathcal{X}^{\perp})$ leaves the sequence exact. Let $Y\in \mathcal{Y}$. Note that $(Y\otimes_{R}\mathbb{X})^{+}\cong\Hom_{R}(\mathbb{X}, Y^{+})$, and $\Ext_{R}^{1}(X, Y^{+})\cong\Tor^{R}_{1}(X,Y)^{+}$. Since $\mathcal{X}\top\mathcal{Y}$, and $(\mathcal{X},\mathcal{Y})$ is a symmetric duality pair, we get that $Y^{+}\in \mathcal{X}\cap \mathcal{X}^{\perp}$. Thus $\Hom_{R}(\mathbb{X}, Y^{+})$ is exact by hypothesis, and so $Y\otimes_{R}\mathbb{X}$ is exact. Hence $M$ is in $\mathcal{GF}_{\mathcal{(X,Y)}}(R)$.

(2)$\Leftrightarrow$ (3) and (4)$\Leftrightarrow$ (5) are straightforward.

(4)$\Rightarrow$ (3) is clear.

(3)$\Rightarrow$ (4) By (3), there exists an exact sequence $0\rightarrow M\rightarrow X^{0}\rightarrow L^{1}\rightarrow 0$ with $X^{0}\in \mathcal{X}$ and $L^{1}\in \mathcal{GF}_{\mathcal{(X,Y)}}(R)$. By Lemma \ref{lem1.1}, we get that ($\mathcal{X}, \mathcal{X}^{\bot}$) is a perfect  cotorsion pair, and so it is complete. Then  there exists an exact sequence $0\rightarrow X^{0}\rightarrow W^{0}\rightarrow N^{1}\rightarrow 0$ with $W^{0}\in \mathcal{X}\cap \mathcal{X}^{\perp}$ and $N^{1}\in \mathcal{X}$. Consider the following pushout diagram
$$\xymatrix{&& 0   \ar[d]_{ }& 0 \ar[d]_{ }   &  \\
0 \ar[r]^{} & M \ar@{=}[d]  \ar[r]^{ } &X^{0}  \ar[d]_{ } \ar[r]^{ }
& L^1\ar[d]_{ } \ar[r]^{ } &0 \\
0 \ar[r]^{} & M\ar[r]^{ } &W^{0}\ar[d]_{} \ar[r]^{} & U^1 \ar[d]_{ } \ar[r]^{ } &0 \\
&  &N^{1}\ar[d]_{}\ar@{=}[r]^{}& N^{1} \ar[d]_{ }  & \\
&    &0             & 0        &   }$$
In the right column, since $L^1\in \mathcal{GF}_{\mathcal{(X,Y)}}(R)$ and $N^1\in \mathcal{X}$, we get that $U^1\in \mathcal{GF}_{\mathcal{(X,Y)}}(R)$ by Lemma \ref{lem2.4}. So $\Ext_{R}^{i}(U^1, W)=0$ for all $W\in \mathcal{X}\cap \mathcal{X}^{\perp}$ and the exact sequence $0\rightarrow M\rightarrow W^{0}\rightarrow U^{1}\rightarrow 0$ is $\Hom_{R}(-, \mathcal{X}\cap \mathcal{X}^{\perp})$-exact. By repeating the proceeding process, we have a $\Hom_{R}(-, \mathcal{X}\cap \mathcal{X}^{\perp})$-exact exact sequence $$0\rightarrow M\rightarrow W^{0}\rightarrow W^{1}\rightarrow\cdots$$
with each $W^{i}\in \mathcal{X}\cap \mathcal{X}^{\perp}$.
\end{proof}

\begin{corollary}\label{cor2.5} Assume that $(\mathcal{X},\mathcal{Y})$ is a perfect and symmetric duality pair, $\mathcal{X}$ is closed under kernels of epimorphisms, and $0\rightarrow L\rightarrow M\rightarrow N\rightarrow 0$ is a short exact sequence in $R$-Mod.

$\mathrm{(1)}$ If $N\in \mathcal{GF}_{\mathcal{(X,Y)}}(R)$, then $L\in \mathcal{GF}_{\mathcal{(X,Y)}}(R)$ if and only if $M\in \mathcal{GF}_{\mathcal{(X,Y)}}(R)$.

$\mathrm{(2)}$ If $L, M\in \mathcal{GF}_{\mathcal{(X,Y)}}(R)$, then $N\in \mathcal{GF}_{\mathcal{(X,Y)}}(R)$ if and only if $\Ext_{R}^{1}(N, \mathcal{X}\cap \mathcal{X}^{\perp})=0$.
\end{corollary}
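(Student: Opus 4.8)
The plan is to reduce the whole corollary to a single structural fact, namely that $\mathcal{GF}_{\mathcal{(X,Y)}}(R)$ is closed under extensions, and then to harvest (1) and (2) from the results already in place. Throughout write $\mathcal{W}=\mathcal{X}\cap\mathcal{X}^{\perp}$; note that under the present hypotheses $\mathcal{X}$ is projectively resolving (it contains $\mathcal{P}$, and is closed under extensions and under kernels of epimorphisms), so $(\mathcal{X},\mathcal{X}^{\perp})$ is a hereditary cotorsion pair, perfect by Lemma~\ref{lem1.1}, with $\Ext_{R}^{i}(\mathcal{W},\mathcal{W})=0$ for $i\geqslant 1$. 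I expect the closure-under-extensions step to be the main obstacle, because the defining $\mathcal{Y}\otimes_{R}-$ exact coresolutions carry no lifting property, so the naive horseshoe does not run on them.

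To get around this I would not argue with the definition but with the description of $\mathcal{GF}_{\mathcal{(X,Y)}}(R)$ provided by Proposition~\ref{prop2.3}, which is available precisely in the symmetric perfect setting of the corollary. Given $0\rightarrow A\rightarrow B\rightarrow C\rightarrow 0$ with $A,C\in\mathcal{GF}_{\mathcal{(X,Y)}}(R)$, condition (4) of Proposition~\ref{prop2.3} supplies $\Hom_{R}(-,\mathcal{W})$-exact coresolutions of $A$ and $C$ by modules of $\mathcal{W}$, together with $\Ext_{R}^{i}(A,\mathcal{W})=\Ext_{R}^{i}(C,\mathcal{W})=0$ for $i\geqslant 1$. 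A horseshoe construction then builds a coresolution of $B$ with terms $W_{A}^{n}\oplus W_{C}^{n}\in\mathcal{W}$: the map out of $A^{(n)}$ extends over $B^{(n)}$ at each stage because $\Ext_{R}^{1}(C^{(n)},\mathcal{W})=0$ (a dimension shift along $C$'s coresolution, using $\Ext_{R}^{i}(\mathcal{W},\mathcal{W})=0$). The cosyzygy extensions $0\rightarrow A^{(n)}\rightarrow B^{(n)}\rightarrow C^{(n)}\rightarrow 0$ then give $\Ext_{R}^{1}(B^{(n)},\mathcal{W})=0$ for every $n$, which is exactly the statement that the coresolution of $B$ is $\Hom_{R}(-,\mathcal{W})$-exact; and $\Ext_{R}^{i}(B,\mathcal{W})=0$ for $i\geqslant 1$ by the long exact sequence of $0\rightarrow A\rightarrow B\rightarrow C\rightarrow 0$. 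Proposition~\ref{prop2.3} now returns $B\in\mathcal{GF}_{\mathcal{(X,Y)}}(R)$. The symmetric hypothesis is used decisively here: it is the passage to $\mathcal{W}$-coresolutions and the vanishing $\Ext_{R}^{1}(\mathcal{GF}_{\mathcal{(X,Y)}}(R),\mathcal{W})=0$ that make the required lifts exist.

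Once closure under extensions is known, Proposition~\ref{prop2.1} upgrades it to the statement that $\mathcal{GF}_{\mathcal{(X,Y)}}(R)$ is projectively resolving, so in particular it is closed under kernels of epimorphisms. Part (1) is then immediate: with $N\in\mathcal{GF}_{\mathcal{(X,Y)}}(R)$, the implication $L\in\mathcal{GF}_{\mathcal{(X,Y)}}(R)\Rightarrow M\in\mathcal{GF}_{\mathcal{(X,Y)}}(R)$ is closure under extensions applied to $0\rightarrow L\rightarrow M\rightarrow N\rightarrow 0$, and $M\in\mathcal{GF}_{\mathcal{(X,Y)}}(R)\Rightarrow L\in\mathcal{GF}_{\mathcal{(X,Y)}}(R)$ is closure under kernels of epimorphisms. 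For the forward direction of (2), if $N\in\mathcal{GF}_{\mathcal{(X,Y)}}(R)$ then the $\Ext$-vanishing of Proposition~\ref{prop2.3}(3) gives $\Ext_{R}^{1}(N,\mathcal{W})=0$ directly.

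For the remaining implication of (2), assume $L,M\in\mathcal{GF}_{\mathcal{(X,Y)}}(R)$ and $\Ext_{R}^{1}(N,\mathcal{W})=0$. The idea is to feed $0\rightarrow L\rightarrow M\rightarrow N\rightarrow 0$ into Proposition~\ref{prop2.2}(2), which is now licensed since closure under extensions holds. That criterion wants a $\Tor$-vanishing, which I would produce from the given $\Ext$-vanishing through the symmetric pair: because $(\mathcal{Y},\mathcal{X})$ is also a duality pair we have $Y^{+}\in\mathcal{X}$ for each $Y\in\mathcal{Y}$, and $\mathcal{Y}\top\mathcal{X}$ with $\Ext_{R}^{1}(X,Y^{+})\cong(\Tor_{1}^{R}(Y,X))^{+}$ forces $Y^{+}\in\mathcal{X}^{\perp}$ as well, so $Y^{+}\in\mathcal{W}$. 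The standard isomorphism $\Ext_{R}^{1}(N,Y^{+})\cong(\Tor_{1}^{R}(Y,N))^{+}$ together with faithfulness of $(-)^{+}$ then converts $\Ext_{R}^{1}(N,\mathcal{W})=0$ into $\Tor_{1}^{R}(Y,N)=0$ for all $Y\in\mathcal{Y}$. Proposition~\ref{prop2.2}(2) applied to $0\rightarrow L\rightarrow M\rightarrow N\rightarrow 0$ now yields $N\in\mathcal{GF}_{\mathcal{(X,Y)}}(R)$, finishing the proof.
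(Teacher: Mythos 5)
Your proof is correct and is essentially the paper's own approach spelled out in full: the paper's entire proof of this corollary reads ``By Proposition \ref{prop2.3}, it is obtained by standard argument,'' and your write-up is exactly that standard argument --- the horseshoe on the $\Hom_{R}(-,\mathcal{X}\cap\mathcal{X}^{\perp})$-exact coresolutions supplied by Proposition \ref{prop2.3}(4) to get closure under extensions, Proposition \ref{prop2.1} to upgrade to projectively resolving (giving part (1)), and the character-module conversion of $\Ext$-vanishing into $\Tor$-vanishing feeding into Proposition \ref{prop2.2} (giving part (2)). One small wording point: the vanishing $\Ext_{R}^{1}(C^{(n)},\mathcal{X}\cap\mathcal{X}^{\perp})=0$ is not a pure dimension shift from $\Ext_{R}^{i}(\mathcal{W},\mathcal{W})=0$; it also needs the $\Hom_{R}(-,\mathcal{X}\cap\mathcal{X}^{\perp})$-exactness of $C$'s coresolution (otherwise one only gets $\Ext_{R}^{1}(C^{(n)},W)\cong\Coker\bigl(\Hom_{R}(W_{C}^{n-1},W)\rightarrow\Hom_{R}(C^{(n-1)},W)\bigr)$), but since Proposition \ref{prop2.3}(4) hands you exactly that exactness, the step is sound as invoked.
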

\begin{proof}By Proposition \ref{prop2.3}, it is obtained by standard argument.
\end{proof}
\begin{remark}(1) Under the condition of Proposition \ref{prop2.3}, the class $\mathcal{GF}_{\mathcal{(X,Y)}}(R)$ is closed under extensions by Corollary \ref{cor2.5}, in this case, Theorem \ref{the2.1} and Theorem \ref{the2.2} also hold.

(2) Under the condition of Proposition \ref{prop2.3}, by Propositions \ref{prop2.1} and \ref{prop2.3}, and Corollary \ref{cor2.5} we have that the class $\mathcal{GF}_{\mathcal{(X,Y)}}(R)$ is projectively resolving, and $\mathcal{X}\cap \mathcal{X}^{\perp}$ is a cogenerator for $\mathcal{GF}_{\mathcal{(X,Y)}}(R)$, and $\Ext_{R}^{i}(G, W)=0 $ for all $G\in \mathcal{GF}_{\mathcal{(X,Y)}}(R)$ and $W\in \mathcal{X}\cap \mathcal{X}^{\perp}$. In addition, it is clear that $\mathcal{X}\cap \mathcal{X}^{\perp}$ is closed under direct summands. According to \cite[Theorem 3.1]{zhu13}, we can get some Ext-functorial descriptions of Gorenstein $(\mathcal{X},\mathcal{Y})$-flat dimension.
\end{remark}
\section{The Gorenstein flat model structures with respect to duality pairs}
According to Hovey's correspondence \cite[Theorem 2.2]{Hov02}, we know that an abelian model structure on $R$-Mod, in fact on any abelian category, is equivalent to a triple ($\mathcal{Q}, \mathcal{W}, \mathcal{R}$) of classes of objects for which $\mathcal{W}$ is thick and ($\mathcal{Q}\cap\mathcal{W}, \mathcal{R}$) and ($\mathcal{Q}, \mathcal{W}\cap\mathcal{R}$) are each complete cotorsion pairs. We say that $\mathcal{W}$ is thick if the class $\mathcal{W}$ is closed under extensions, direct summands, kernels of epimorphisms, and cokernels of monomorphisms. In this case, $\mathcal{Q}$ is precisely the class of cofibrant objects, $\mathcal{R}$ is precisely fibrant objects, and $\mathcal{W}$ is the class of trivial objects of the model structure. Hence, we denote an abelian model structure as a triple $\mathcal{M}=$($\mathcal{Q}, \mathcal{W}, \mathcal{R}$), and call it a Hovey triple, and denote the two associated cotorsion pairs above by ($\mathcal{\tilde{Q}}, \mathcal{R}$) and ($\mathcal{Q}, \mathcal{\tilde{R}}$), where $\mathcal{\tilde{Q}}=\mathcal{Q}\cap \mathcal{W}$ is the class of trivially cofibrant objects and $\mathcal{\tilde{R}}=\mathcal{R}\cap \mathcal{W}$ is the class of trivially fibrant objects. We say that $\mathcal{M}$ is hereditary if both of these associated cotorsion pairs are hereditary. We refer to \cite{Gil16,Hov02,Hov99} for a more detailed discussion on model structures.

In \cite{Gil15}, the main theorem gave a new method for constructing hereditary abelian model structures, as follows.

\begin{theorem}(\cite[Theorem 1.1]{Gil15})\label{the3.1} Let ($\mathcal{\tilde{Q}}, \mathcal{R}$) and ($\mathcal{Q}, \mathcal{\tilde{R}}$) be two complete hereditary cotorsion pairs in an abelian category $\mathcal{C}$ satisfying the two conditions below.

$\mathrm{(1)}$ $\mathcal{\tilde{R}}\subseteq\mathcal{R}$ and $\mathcal{\tilde{Q}}\subseteq\mathcal{Q}$.

$\mathrm{(2)}$ $\mathcal{\tilde{Q}}\cap\mathcal{R}=\mathcal{Q}\cap\mathcal{\tilde{R}}$.\\
Then ($\mathcal{Q}, \mathcal{W}, \mathcal{R}$) is a Hovey triple, where the thick class $\mathcal{W}$ can be described in two following ways:

\[\begin{aligned}\mathcal{W}&=\{X\in \mathcal{C}~|~\text{there exists a short exact sequence}~0\rightarrow X\rightarrow R\rightarrow Q\rightarrow 0 ~\text{with}~R\in \tilde{R}, Q\in \tilde{Q}\}\\
&=\{X\in \mathcal{C}~|~\text{there exists a short exact sequence}~0\rightarrow Q'\rightarrow R'\rightarrow X \rightarrow 0 ~\text{with}~R'\in \tilde{R}, Q'\in \tilde{Q}\}.\end{aligned}.\]
Moreover, $\mathcal{W}$ is unique in the sense that if $\mathcal{V}$ is another thick class for which ($\mathcal{Q}, \mathcal{V}, \mathcal{R}$) is a Hovey triple, then necessarily $\mathcal{V}=\mathcal{W}$.
\end{theorem}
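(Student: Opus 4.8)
The plan is to invoke Hovey's correspondence, recalled just above: it suffices to produce a thick class $\mathcal{W}$ for which $(\mathcal{Q}\cap\mathcal{W},\mathcal{R})$ and $(\mathcal{Q},\mathcal{W}\cap\mathcal{R})$ are complete cotorsion pairs. Since the two given pairs $(\tilde{\mathcal{Q}},\mathcal{R})$ and $(\mathcal{Q},\tilde{\mathcal{R}})$ are already complete, everything reduces to defining $\mathcal{W}$ by the first of the two displayed short exact sequences and proving: (a) $\mathcal{Q}\cap\mathcal{W}=\tilde{\mathcal{Q}}$, (b) $\mathcal{W}\cap\mathcal{R}=\tilde{\mathcal{R}}$, and (c) $\mathcal{W}$ is thick. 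Write $\omega=\tilde{\mathcal{Q}}\cap\mathcal{R}=\mathcal{Q}\cap\tilde{\mathcal{R}}$ for the common ``core'' supplied by hypothesis (2); I will use repeatedly that heredity makes $\tilde{\mathcal{Q}},\mathcal{Q}$ closed under kernels of epimorphisms and $\tilde{\mathcal{R}},\mathcal{R}$ closed under cokernels of monomorphisms, that $\mathcal{R}=\tilde{\mathcal{Q}}^{\perp}$ and $\mathcal{Q}={}^{\perp}\tilde{\mathcal{R}}$, and that all four classes are closed under extensions and direct summands.

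The two ``hard-direction'' inclusions are where hypotheses (1) and (2) do the real work, and they are clean. For the $\subseteq$ of (a), take $X\in\mathcal{Q}\cap\mathcal{W}$ with witness $0\to X\to D\to A\to 0$, $D\in\tilde{\mathcal{R}}$, $A\in\tilde{\mathcal{Q}}$. By (1) $A\in\mathcal{Q}$, so extension-closure of $\mathcal{Q}$ forces $D\in\mathcal{Q}$; hence $D\in\mathcal{Q}\cap\tilde{\mathcal{R}}=\omega=\tilde{\mathcal{Q}}\cap\mathcal{R}\subseteq\tilde{\mathcal{Q}}$, and now $X=\Ker(D\to A)\in\tilde{\mathcal{Q}}$ because $\tilde{\mathcal{Q}}$ is closed under kernels of epimorphisms. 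For the $\subseteq$ of (b), take $X\in\mathcal{R}\cap\mathcal{W}$ with the same-shaped witness; since $X\in\mathcal{R}=\tilde{\mathcal{Q}}^{\perp}$ and $A\in\tilde{\mathcal{Q}}$ we get $\Ext^{1}_{R}(A,X)=0$, so the sequence splits and $X$ is a direct summand of $D\in\tilde{\mathcal{R}}$, whence $X\in\tilde{\mathcal{R}}$. The reverse inclusion $\tilde{\mathcal{R}}\subseteq\mathcal{W}$ is immediate from the trivial witness $0\to X\to X\to 0\to 0$.

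The technical core — and the step I expect to be the main obstacle — is proving that $\mathcal{W}$ is thick, establishing the remaining inclusion $\tilde{\mathcal{Q}}\subseteq\mathcal{W}$, and checking that the two displayed descriptions of $\mathcal{W}$ agree. The difficulty is that a $\tilde{\mathcal{Q}}$-object is not visibly a subobject of a $\tilde{\mathcal{R}}$-object with $\tilde{\mathcal{Q}}$-cokernel: a special $\tilde{\mathcal{R}}$-preenvelope $0\to X\to D\to C\to 0$ of $X\in\tilde{\mathcal{Q}}$ has $D\in\mathcal{Q}\cap\tilde{\mathcal{R}}=\omega$ by extension-closure and (2), but the cokernel $C$ lies only in $\mathcal{Q}$ a priori, and $\tilde{\mathcal{Q}}$ is \emph{not} closed under cokernels of monomorphisms, so one cannot simply read off $C\in\tilde{\mathcal{Q}}$. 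In fact $\tilde{\mathcal{Q}}\subseteq\mathcal{W}$ and the closure of $\mathcal{W}$ under cokernels of monomorphisms are entangled and must be bootstrapped together. The way through is to manufacture the required sequences by combining special preenvelopes and precovers from \emph{both} complete cotorsion pairs, splicing them along pullback and pushout squares, using heredity to propagate $\Ext$-vanishing across degrees and using (2) at each stage to recognise spliced middle terms as lying in $\omega$ (hence simultaneously in $\tilde{\mathcal{Q}}$ and $\tilde{\mathcal{R}}$). The four closure properties of $\mathcal{W}$ are handled by the same toolkit: given witnesses for two of the terms in a short exact sequence, one builds a witness for the third via a horseshoe-type pullback/pushout, again invoking (2) and heredity.

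Finally, once (a), (b) and thickness hold, Hovey's correspondence delivers the Hovey triple $(\mathcal{Q},\mathcal{W},\mathcal{R})$, and the second description of $\mathcal{W}$ is obtained along the way from the equivalence of descriptions. Uniqueness is then formal, and here the bootstrapping does go through because the competing class is \emph{given} thick: if $(\mathcal{Q},\mathcal{V},\mathcal{R})$ is another Hovey triple, its associated pairs are $(\mathcal{Q}\cap\mathcal{V},\mathcal{R})$ and $(\mathcal{Q},\mathcal{V}\cap\mathcal{R})$, so $\mathcal{Q}\cap\mathcal{V}={}^{\perp}\mathcal{R}=\tilde{\mathcal{Q}}$ and $\mathcal{V}\cap\mathcal{R}=\mathcal{Q}^{\perp}|_{\,\cdots}=\tilde{\mathcal{R}}$. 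For $X\in\mathcal{W}$, a first-description witness $0\to X\to D\to A\to 0$ has $D\in\tilde{\mathcal{R}}\subseteq\mathcal{V}$ and $A\in\tilde{\mathcal{Q}}\subseteq\mathcal{V}$, so $X\in\mathcal{V}$ by thickness of $\mathcal{V}$; conversely, for $X\in\mathcal{V}$ a special $\tilde{\mathcal{R}}$-preenvelope $0\to X\to D\to C\to 0$ has $D\in\tilde{\mathcal{R}}\subseteq\mathcal{V}$, so $C\in\mathcal{V}\cap\mathcal{Q}=\tilde{\mathcal{Q}}$ by cokernel-closure of $\mathcal{V}$, exhibiting $X\in\mathcal{W}$. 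Hence $\mathcal{V}=\mathcal{W}$.
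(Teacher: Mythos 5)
There is a genuine gap, and it sits exactly where the content of the theorem lies. (For reference: the paper itself gives no proof of this statement — it is quoted verbatim from Gillespie's \cite[Theorem 1.1]{Gil15} — so the only meaningful comparison is with Gillespie's original argument, whose skeleton you do reproduce: define $\mathcal{W}$ by the first displayed description, prove $\mathcal{Q}\cap\mathcal{W}=\mathcal{\tilde{Q}}$, $\mathcal{W}\cap\mathcal{R}=\mathcal{\tilde{R}}$ and thickness of $\mathcal{W}$, then invoke Hovey's correspondence. Your proofs of $\mathcal{Q}\cap\mathcal{W}\subseteq\mathcal{\tilde{Q}}$, of $\mathcal{R}\cap\mathcal{W}\subseteq\mathcal{\tilde{R}}$, of $\mathcal{\tilde{R}}\subseteq\mathcal{W}$, and of the uniqueness of $\mathcal{W}$ are complete and correct.) What is never actually proved is: the inclusion $\mathcal{\tilde{Q}}\subseteq\mathcal{W}$, the four closure properties making $\mathcal{W}$ thick (in particular closure under extensions and under cokernels of monomorphisms), and the agreement of the two displayed descriptions of $\mathcal{W}$. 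For all of these you substitute a description of a toolkit (``splice special preenvelopes and precovers along pullback/pushout squares, use heredity and (2)''). That is a plan, not a proof: in Gillespie's paper these points occupy several lemmas of careful diagram chases, and nothing in your text certifies that the plan closes. Without them, the two cotorsion pairs $(\mathcal{Q}\cap\mathcal{W},\mathcal{R})$ and $(\mathcal{Q},\mathcal{W}\cap\mathcal{R})$ are not identified with the given complete pairs and $\mathcal{W}$ is not known to be thick, so Hovey's correspondence cannot be applied and the theorem is not established.

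A concrete symptom that the outline is not worked through: the one step you single out as the ``main obstacle'' requiring bootstrapping, namely $\mathcal{\tilde{Q}}\subseteq\mathcal{W}$, in fact has a two-line direct proof which your sketch misses because you reach for the wrong cotorsion pair. For $X\in\mathcal{\tilde{Q}}$, do not take a special $\mathcal{\tilde{R}}$-preenvelope (you correctly observe that this stalls); instead use completeness of $(\mathcal{\tilde{Q}},\mathcal{R})$ to get $0\rightarrow X\rightarrow N\rightarrow \tilde{Q}\rightarrow 0$ with $N\in\mathcal{R}$ and $\tilde{Q}\in\mathcal{\tilde{Q}}$. Extension closure of $\mathcal{\tilde{Q}}$ gives $N\in\mathcal{\tilde{Q}}\cap\mathcal{R}$, which by hypothesis (2) equals $\mathcal{Q}\cap\mathcal{\tilde{R}}\subseteq\mathcal{\tilde{R}}$; hence this very sequence is already a $\mathcal{W}$-witness. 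The lesson is that each of the missing steps turns on a specific, non-interchangeable choice of which of the two complete pairs to resolve against, and these choices are precisely what a proof must exhibit; the genuinely delicate items — closure of $\mathcal{W}$ under extensions, direct summands and cokernels of monomorphisms, and the equality of the two descriptions of $\mathcal{W}$ — remain entirely unwritten in your proposal.
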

In \cite{Gil17}, over a right coherent ring $R$, Gillespie constructed a hereditary abelian model structure on $R$-Mod whose cofibrant objects are precisely the Gorenstein flat modules using the above theorem. In \cite{EIP17}, Estrada, Iacob and P\'{e}res studied relative Gorenstein flat model structure on the categories of left $R$-modules and complexes. In the section, using the same method, we construct an abelian model structure on $R$-Mod whose cofibrant objects are precisely the Gorenstein $(\mathcal{X},\mathcal{Y})$-flat modules, where $(\mathcal{X},\mathcal{Y})$ is a duality pair.

 Recall that a class $\mathcal{A}$ of left $R$-modules is a $\kappa$-Kaplansky class if there exists a cardinal number $\kappa$ such that for every $M\in \mathcal{A}$  and for any subset $S\subseteq M$ with Card$(S)\leqslant\kappa$, there exists a submodule $N$ of $M$ that contains $S$ with the property that Card$(N)\leqslant\kappa$ and both $N$ and $M/N$ are in $\mathcal{A}$. We say that $\mathcal{A}$ is a Kaplansky class if it is a $\kappa$-Kaplansky class for some regular cardinal $\kappa$. Similarly, we have the notion of a Kaplansky class of chain complexes.

 \begin{lemma}\label{ER02}(\cite[Theorem 2.9]{ER02}) Let $\mathcal{K}$ be a Kaplansky class. If $\mathcal{K}$ contains the projective modules and it is closed under extensions and direct limits, then ($\mathcal{K}$, $\mathcal{K}^{\bot}$) is a perfect cotorsion pair in $R$-Mod.
\end{lemma}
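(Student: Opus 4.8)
The plan is to exhibit $(\mathcal{K},\mathcal{K}^{\bot})$ as a cotorsion pair cogenerated by a \emph{set} and then feed this into the Eklof--Trlifaj machinery. Since $\mathcal{K}$ is a $\kappa$-Kaplansky class for some regular $\kappa$, after enlarging $\kappa$ (a $\kappa$-Kaplansky class is $\kappa'$-Kaplansky for all regular $\kappa'\geqslant\kappa$) I may assume $\mathrm{Card}(R)\leqslant\kappa$, and I let $\mathcal{S}$ be a set of representatives of the isomorphism classes of modules in $\mathcal{K}$ of cardinality at most $\kappa$; note $R\in\mathcal{S}$ because $\mathcal{K}$ contains the projectives. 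The first task is to show that every $M\in\mathcal{K}$ admits an $\mathcal{S}$-filtration, i.e.\ a continuous chain $0=M_{0}\subseteq M_{1}\subseteq\cdots\subseteq M_{\lambda}=M$ with each $M_{\alpha+1}/M_{\alpha}\in\mathcal{S}$. This is precisely where the Kaplansky hypothesis is used: I well-order $M$, and at each successor stage I apply the defining property of a $\kappa$-Kaplansky class to enlarge $M_{\alpha}$ by a submodule containing the least not-yet-covered element, whose quotient lies in $\mathcal{K}$ and has cardinality $\leqslant\kappa$, arranging at the same time that the successive quotients themselves land in $\mathcal{S}$.

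With the filtration available I next prove $\mathcal{S}^{\bot}=\mathcal{K}^{\bot}$. The inclusion $\mathcal{K}^{\bot}\subseteq\mathcal{S}^{\bot}$ is immediate from $\mathcal{S}\subseteq\mathcal{K}$. For the reverse, take $B\in\mathcal{S}^{\bot}$ and $K\in\mathcal{K}$; since $K$ is $\mathcal{S}$-filtered and $\Ext_{R}^{1}(-,B)$ vanishes on every member of $\mathcal{S}$, Eklof's Lemma gives $\Ext_{R}^{1}(K,B)=0$, so $B\in\mathcal{K}^{\bot}$. Hence ${}^{\bot}(\mathcal{K}^{\bot})={}^{\bot}(\mathcal{S}^{\bot})$, and by the Eklof--Trlifaj theorem the pair $({}^{\bot}(\mathcal{S}^{\bot}),\mathcal{S}^{\bot})$ is a complete cotorsion pair cogenerated by the set $\mathcal{S}$; here the presence of $R\in\mathcal{S}$ is exactly what makes the special precovers surjective, i.e.\ what guarantees completeness.

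It remains to identify ${}^{\bot}(\mathcal{K}^{\bot})$ with $\mathcal{K}$, the inclusion $\mathcal{K}\subseteq{}^{\bot}(\mathcal{K}^{\bot})$ being formal. For the converse, let $M\in{}^{\bot}(\mathcal{K}^{\bot})={}^{\bot}(\mathcal{S}^{\bot})$. Completeness furnishes a short exact sequence $0\rightarrow N\rightarrow F\rightarrow M\rightarrow 0$ in which $F$ is $\mathcal{S}$-filtered and $N\in\mathcal{S}^{\bot}=\mathcal{K}^{\bot}$. Because $\mathcal{K}$ is closed under extensions and direct limits it is closed under transfinite extensions, so $F\in\mathcal{K}$; and since $M\in{}^{\bot}(\mathcal{K}^{\bot})$ we have $\Ext_{R}^{1}(M,N)=0$, so the sequence splits and $M$ is a direct summand of $F$. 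Finally, closure of $\mathcal{K}$ under direct limits forces closure under direct summands, since a summand with idempotent projection $e$ is the colimit of the telescope $F\xrightarrow{e}F\xrightarrow{e}\cdots$; thus $M\in\mathcal{K}$. This proves $(\mathcal{K},\mathcal{K}^{\bot})$ is a cotorsion pair, complete by the previous paragraph, and since its left class is closed under direct limits, Enochs' theorem on the existence of covers (together with the dual envelope statement) upgrades completeness to \emph{perfectness}.

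The main obstacle I expect is the first step: running the transfinite induction so that not only the pieces $M_{\alpha+1}/M_{\alpha}$ but also the partial unions remain inside $\mathcal{K}$, while the chain genuinely exhausts $M$ and each successive quotient truly lands in $\mathcal{S}$. Getting the interplay between the well-ordering and the repeated use of the Kaplansky property right is the delicate bookkeeping; once the $\mathcal{S}$-filtration is secured, everything else is a formal consequence of Eklof's Lemma, the Eklof--Trlifaj theorem, and the summand-as-colimit argument.
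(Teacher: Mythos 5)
The paper offers no proof of this lemma at all: it is imported wholesale from \cite[Theorem 2.9]{ER02}, so the only comparison available is with that source, and your argument is in substance a reconstruction of it --- deconstruct the members of $\mathcal{K}$ into $\mathcal{S}$-filtrations for a set $\mathcal{S}$ of $\leqslant\kappa$-sized members of $\mathcal{K}$, deduce $\mathcal{K}^{\bot}=\mathcal{S}^{\bot}$ from Eklof's Lemma, invoke the Eklof--Trlifaj theorem for completeness, identify ${}^{\bot}(\mathcal{K}^{\bot})$ with $\mathcal{K}$ by splitting plus the telescope trick, and pass from complete to perfect using closure under direct limits (Enochs' cover theorem and Xu's envelope theorem, see \cite{EJ00}). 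The architecture is correct; two points need repair before it is a proof.

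First, the filtration step --- the one you yourself flag as the main obstacle --- does not work as written. You cannot feed $M_{\alpha}\cup\{x\}$ into the Kaplansky property (once $\alpha\geqslant\kappa^{+}$ the submodule $M_{\alpha}$ is too large), and applying the property to $M$ with the singleton $\{x\}$ yields a submodule that has no reason to contain $M_{\alpha}$, nor any control over its image modulo $M_{\alpha}$. The correct bookkeeping is to carry the induction hypothesis $M/M_{\alpha}\in\mathcal{K}$: apply the Kaplansky property to the quotient $M/M_{\alpha}$ with the singleton $\{\bar{x}\}$, and let $M_{\alpha+1}$ be the preimage in $M$ of the resulting submodule; then $M_{\alpha+1}/M_{\alpha}\in\mathcal{K}$ has cardinality $\leqslant\kappa$ and $M/M_{\alpha+1}\in\mathcal{K}$, while at a limit ordinal $\lambda$ the hypothesis survives because $M/M_{\lambda}\cong\varinjlim_{\alpha<\lambda}M/M_{\alpha}$ and $\mathcal{K}$ is closed under direct limits. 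So direct-limit closure is already indispensable in this step, not only in the later ones. Second, your explanation of the role of $R\in\mathcal{S}$ is misplaced: completeness of the pair cogenerated by the set $\mathcal{S}$ never needs $R\in\mathcal{S}$, since Salce's trick produces the surjective half from the injective half using only that projectives lie in ${}^{\bot}(\mathcal{S}^{\bot})$, which is automatic. Where something is genuinely needed is in your final paragraph, to see that the middle term $F$ lies in $\mathcal{K}$: Salce's construction exhibits $F$ as an extension of an $\mathcal{S}$-filtered module by a projective, so either use $R\in\mathcal{S}$ to make $F$ itself $\mathcal{S}$-filtered, or, more simply, use the stated hypotheses that $\mathcal{K}$ contains the projectives and is closed under extensions (plus transfinite extensions for the $\mathcal{S}$-filtered part). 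This also renders your preliminary enlargement of $\kappa$ --- whose justification would itself require the very filtration machinery being built --- unnecessary. With these repairs the proof is complete and agrees with the argument of \cite{ER02}.
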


\begin{lemma}\label{lem3.1} If $\mathcal{X}$ is closed under pure submodules and pure quotients, then the class $\mathcal{GF}_{\mathcal{(X,Y)}}(R)$ is a Kaplansky class.
\end{lemma}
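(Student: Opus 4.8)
Lemma 1.22 (Kaplansky class) — proof plan.

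The plan is to produce a single regular cardinal $\kappa$ witnessing the Kaplansky property for the class $\mathcal{GF}_{\mathcal{(X,Y)}}(R)$, by attaching to each Gorenstein $(\mathcal{X},\mathcal{Y})$-flat module a defining complete resolution and then "building up" a countable tower of submodules that cuts out a small, $\mathcal{Y}\otimes_R-$ exact subresolution. First I would fix a $\kappa_0$-Kaplansky cardinal for the class $\mathcal{X}$ itself; this exists because, by hypothesis, $\mathcal{X}$ is closed under pure submodules and pure quotients, hence is a Kaplansky class (this is the standard fact that a class closed under pure subobjects and pure quotients is Kaplansky, and it is exactly the content of the purity conclusions in Lemma~\ref{lem1.1}(1)). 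I would then set $\kappa = \kappa_0^{+}$ (or any regular cardinal $\geq \max\{\kappa_0, |R|, \aleph_0\}$), large enough to absorb the ring and countably many simultaneous enlargements.

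The core step is the enlargement construction. Let $M \in \mathcal{GF}_{\mathcal{(X,Y)}}(R)$ with a complete $\mathcal{Y}\otimes_R-$ exact resolution
$$\mathbb{X}:\ \cdots\rightarrow X_{1}\rightarrow X_{0}\rightarrow X^{0}\rightarrow X^{1}\rightarrow\cdots,\qquad M\cong \Ker(X^{0}\rightarrow X^{1}),$$
and let $S\subseteq M$ with $\mathrm{Card}(S)\leqslant \kappa$. The idea is to construct, degree by degree and level by level, a subcomplex $\mathbb{X}'\subseteq \mathbb{X}$ consisting of small submodules $X'_{i}\subseteq X_{i}$ and $X'^{\,j}\subseteq X^{j}$, each belonging to $\mathcal{X}$, each of cardinality $\leqslant \kappa$, with the quotient complex also a complex of modules in $\mathcal{X}$, and such that $S$ lands inside $N:=\Ker(X'^{\,0}\rightarrow X'^{\,1})$. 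I would do this by the usual back-and-forth: start with $S$, then repeatedly (i) invoke the $\kappa_0$-Kaplansky property of $\mathcal{X}$ in each degree to enlarge the current small submodule to one that is in $\mathcal{X}$ with small quotient in $\mathcal{X}$, and (ii) enlarge further so that the enlarged pieces are closed under the differentials of $\mathbb{X}$ in both directions and so that exactness is preserved on the nose (each cycle of $\mathbb{X}'$ is a boundary coming from $\mathbb{X}'$, achievable by throwing in preimages). Each individual enlargement adds at most $\kappa$ elements, and after $\omega$ many rounds the union stabilizes because $\kappa$ is regular and uncountable; this yields $\mathbb{X}'$ with all $X'_i,X'^{\,j}\in\mathcal{X}$ of size $\leqslant\kappa$ and all quotients $X_i/X'_i,\,X^j/X'^{\,j}\in\mathcal{X}$.

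Two points need care, and the second is the genuine obstacle. First, one must check that the subcomplex $\mathbb{X}'$ is itself exact and that $N = \Ker(X'^{\,0}\to X'^{\,1})$ really is $M\cap$ (that cycle group), so that $0\to N\to M\to M/N\to 0$ is a piece of the two complexes; this is bookkeeping handled by insisting in step (ii) that cycles be captured together with their chosen preimages. The hard part will be arranging that $\mathbb{X}'$ is $\mathcal{Y}\otimes_R-$ exact and that the quotient complex $\mathbb{X}/\mathbb{X}'$ is also $\mathcal{Y}\otimes_R-$ exact, so that both $N$ and $M/N$ come out Gorenstein $(\mathcal{X},\mathcal{Y})$-flat. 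The clean way to force this is to observe that the subcomplex inclusion $\mathbb{X}'\hookrightarrow\mathbb{X}$ should be taken degreewise pure: since each quotient $X^j/X'^{\,j}$ (and $X_i/X'_i$) is required to lie in $\mathcal{X}$, and since a short exact sequence whose outer terms are "resolvable" by $\mathcal{X}$ behaves well under $\otimes$, purity gives that applying $Y\otimes_R-$ to $0\to \mathbb{X}'\to \mathbb{X}\to \mathbb{X}/\mathbb{X}'\to 0$ yields a short exact sequence of complexes with $\mathbb{X}$ in the middle already $\mathcal{Y}\otimes_R-$ exact; the long exact sequence in homology then transfers $\mathcal{Y}\otimes_R-$ exactness to both $\mathbb{X}'$ and $\mathbb{X}/\mathbb{X}'$. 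So the enlargement in step (i) must be strengthened to make each inclusion $X'_i\hookrightarrow X_i$, $X'^{\,j}\hookrightarrow X^{j}$ pure — which is automatic once the quotients are demanded to be in $\mathcal{X}$ together with the Kaplansky enlargement done along a pure submodule, again using the pure-submodule closure of $\mathcal{X}$ from Lemma~\ref{lem1.1}(1). Once both $\mathbb{X}'$ and $\mathbb{X}/\mathbb{X}'$ are exact complexes in $\mathcal{X}$ that are $\mathcal{Y}\otimes_R-$ exact, we conclude $N\in\mathcal{GF}_{\mathcal{(X,Y)}}(R)$ and $M/N\in\mathcal{GF}_{\mathcal{(X,Y)}}(R)$ with $\mathrm{Card}(N)\leqslant\kappa$, which is exactly the $\kappa$-Kaplansky condition, completing the proof.
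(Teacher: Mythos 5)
Your overall strategy (building small subcomplexes of the defining resolution) is in the same spirit as the paper's proof, but the step on which everything hinges is wrong as stated. You claim that if $0\rightarrow \mathbb{X}'\rightarrow \mathbb{X}\rightarrow \mathbb{X}/\mathbb{X}'\rightarrow 0$ is a \emph{degreewise} pure short exact sequence of complexes and $Y\otimes_{R}\mathbb{X}$ is exact, then the long exact homology sequence forces both $Y\otimes_{R}\mathbb{X}'$ and $Y\otimes_{R}(\mathbb{X}/\mathbb{X}')$ to be exact. It does not: when the middle term is exact, the long exact sequence only produces isomorphisms $\h_{n}(Y\otimes_{R}(\mathbb{X}/\mathbb{X}'))\cong \h_{n-1}(Y\otimes_{R}\mathbb{X}')$ between the homologies of the two outer terms; to conclude that one of them vanishes you must already know the other does. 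A minimal counterexample to your inference: let $\mathbb{X}$ be $0\rightarrow R\xrightarrow{\id} R\rightarrow 0$ and $\mathbb{X}'$ the subcomplex $0\rightarrow 0\rightarrow R\rightarrow 0$. The inclusion is degreewise split (hence degreewise pure), $Y\otimes_{R}\mathbb{X}$ is exact for every $Y$, all components lie in $\mathcal{X}$ whenever $R\in\mathcal{X}$, and the quotient complex $0\rightarrow R\rightarrow 0\rightarrow 0$ also has components in $\mathcal{X}$; yet $Y\otimes_{R}\mathbb{X}'$ has homology $Y\neq 0$. Your step (ii) (adjoining preimages) does repair plain exactness of $\mathbb{X}'$, but it cannot be run for $Y\otimes_{R}\mathbb{X}'$: the cycles there are elements of tensor products, not of $\mathbb{X}$, and $\mathcal{Y}$ may be a proper class, so there is no way to "throw in preimages" for all $Y$ simultaneously. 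This is precisely the hard point you flagged, and the proposal resolves it by an invalid inference.

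The repair is to use purity \emph{in the category of complexes} rather than degreewise purity: one needs the sequence $0\rightarrow \mathbb{X}'\rightarrow \mathbb{X}\rightarrow \mathbb{X}/\mathbb{X}'\rightarrow 0$ to have the lifting property against finitely presented complexes (equivalently, to remain exact after tensoring with arbitrary complexes of right modules). With this stronger notion, a cycle of $\mathbb{X}/\mathbb{X}'$ is a chain map from a sphere complex ($R$ concentrated in one degree) which lifts to $\mathbb{X}$, where it bounds; hence exactness passes to pure quotient complexes, and then to pure subcomplexes via the long exact sequence. Moreover complex-purity is preserved by $Y\otimes_{R}-$, so the same sphere-lifting argument over $\mathbb{Z}$ transfers $\mathcal{Y}\otimes_{R}-$ exactness to both outer terms. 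This is exactly the route the paper takes: it shows that the class ${}_{\mathcal{Y}}\widetilde{\mathcal{EX}}$ of $\mathcal{Y}\otimes_{R}-$ exact exact complexes with components in $\mathcal{X}$ is closed under pure subcomplexes and pure quotients in this complex sense (following \cite[Theorem 3.7]{EG15}), concludes that ${}_{\mathcal{Y}}\widetilde{\mathcal{EX}}$ is a Kaplansky class of complexes by \cite[Proposition 3.4]{EG15}, and then passes to the class of $0$-cycles, which is $\mathcal{GF}_{(\mathcal{X},\mathcal{Y})}(R)$. Your enlargement scheme can be salvaged by demanding that the constructed subcomplexes be pure as complexes, but as written the proof has a genuine gap.
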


\begin{proof}Let $_{\mathcal{Y}}\mathcal{\widetilde{EX}}$ be the class of exact complexes of left $R$-modules in $\mathcal{X}$ such that $\mathcal{Y}\otimes_{R}-$ leaves it exact. Similar to the proof of \cite[Theorem 3.7]{EG15}, we get that the class $_{\mathcal{Y}}\mathcal{\widetilde{EX}}$ is closed under pure subcomplexes and pure quotients, and so it is a Kaplansky class by \cite[Proposition 3.4]{EG15}. Since $\mathcal{GF}_{\mathcal{(X,Y)}}(R)$ is the class of $0$-cycles of complexes in $_{\mathcal{Y}}\mathcal{\widetilde{EX}}$, we obtain that it is a Kaplansky class.
\end{proof}

\begin{lemma}\label{lem3.2} Assume that $\mathcal{X}$ is closed under direct sums, pure submodules and pure quotients, and the class $\mathcal{GF}_{\mathcal{(X,Y)}}(R)$ is closed under extensions. Then the class $\mathcal{GF}_{\mathcal{(X,Y)}}(R)$ is closed under direct limits.
\end{lemma}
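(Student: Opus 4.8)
The plan is to deduce closure under direct limits from closure under direct sums together with closure under pure epimorphic images. First I would note that, since $\mathcal{X}$ is closed under direct sums, Remark \ref{rem2.2}(1) gives that $\mathcal{GF}_{\mathcal{(X,Y)}}(R)$ is closed under direct sums. Then I would invoke the standard presentation of a directed colimit: for any directed system $(M_{i})_{i\in I}$ the canonical epimorphism $\bigoplus_{i\in I}M_{i}\to\varinjlim_{i}M_{i}$ is pure, because any homomorphism from a finitely presented module into $\varinjlim_{i}M_{i}$ factors through some $M_{i}$ and hence through the coproduct. Consequently $\varinjlim_{i}M_{i}$ is a pure quotient of $\bigoplus_{i}M_{i}\in\mathcal{GF}_{\mathcal{(X,Y)}}(R)$, so it suffices to prove that $\mathcal{GF}_{\mathcal{(X,Y)}}(R)$ is closed under pure quotients.

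Thus let $0\to A\to B\to C\to 0$ be pure exact with $B\in\mathcal{GF}_{\mathcal{(X,Y)}}(R)$, and I would verify the two conditions of Lemma \ref{lem2.3}(2) for $C$. The vanishing of higher $\Tor$ is the easy half: a pure exact sequence is a directed colimit of split exact sequences, so applying $Y\otimes_{R}-$ yields short exact sequences $0\to\Tor_{i}^{R}(Y,A)\to\Tor_{i}^{R}(Y,B)\to\Tor_{i}^{R}(Y,C)\to 0$ for every $i\ge 1$ and every $Y\in\mathcal{Y}$; since $\Tor_{i}^{R}(Y,B)=0$ we obtain $\Tor_{i}^{R}(Y,C)=0$. (For the direct-limit application one may instead use directly that $\Tor_{i}^{R}(Y,\varinjlim_{j}M_{j})=\varinjlim_{j}\Tor_{i}^{R}(Y,M_{j})=0$.) What then remains is to construct a $\mathcal{Y}\otimes_{R}-$ exact $\mathcal{X}$-coresolution $0\to C\to X^{0}\to X^{1}\to\cdots$; by Lemma \ref{lem2.3} this, together with the $\Tor$-vanishing, will give $C\in\mathcal{GF}_{\mathcal{(X,Y)}}(R)$.

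Constructing this coresolution is where the difficulty lies, and I would carry it out at the level of complexes rather than by a single pushout. Recall from the proof of Lemma \ref{lem3.1} that the class ${}_{\mathcal{Y}}\mathcal{\widetilde{EX}}$ of exact, $\mathcal{Y}\otimes_{R}-$ exact complexes with entries in $\mathcal{X}$ is closed under pure subcomplexes and pure quotient complexes, and that $\mathcal{GF}_{\mathcal{(X,Y)}}(R)=\Z_{0}({}_{\mathcal{Y}}\mathcal{\widetilde{EX}})$; note also that $\mathcal{X}$ itself is closed under direct limits, a direct limit being a pure quotient of a direct sum to which the hypotheses on $\mathcal{X}$ apply. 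Choosing $\mathbf{W}\in{}_{\mathcal{Y}}\mathcal{\widetilde{EX}}$ with $\Z_{0}(\mathbf{W})=B$, my goal would be to produce a pure subcomplex $\mathbf{A}\subseteq\mathbf{W}$ lying in ${}_{\mathcal{Y}}\mathcal{\widetilde{EX}}$ with $\Z_{0}(\mathbf{A})=A$; then $\mathbf{W}/\mathbf{A}$ is a pure quotient complex in ${}_{\mathcal{Y}}\mathcal{\widetilde{EX}}$ with $\Z_{0}(\mathbf{W}/\mathbf{A})=C$, finishing the argument. The hard part will be exactly this lifting of purity from cycles to complexes. The naive approach, pushing out $A\hookrightarrow B\hookrightarrow X^{0}$ along the first step of a coresolution of $B$, fails: that step $B\hookrightarrow X^{0}$ is only $\mathcal{Y}\otimes_{R}-$ exact and need not be pure, so $A$ need not be pure in $X^{0}$ and $X^{0}/A$ need not lie in $\mathcal{X}$. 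Accordingly I would instead exploit the Kaplansky property of ${}_{\mathcal{Y}}\mathcal{\widetilde{EX}}$ established in Lemma \ref{lem3.1}, building $\mathbf{A}$ as a transfinite union of pure subcomplexes of $\mathbf{W}$, arranging at each stage that the degree-$0$ cycles grow toward $A$ and using the $\Tor$-vanishing already obtained to keep every stage $\mathcal{Y}\otimes_{R}-$ exact.
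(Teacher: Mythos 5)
Your opening moves are fine: closure under direct sums follows from Remark \ref{rem2.2}(1), the canonical map $\bigoplus_{i}M_{i}\to\varinjlim_{i}M_{i}$ is indeed a pure epimorphism, and your verification that a pure quotient of a module in $\mathcal{GF}_{\mathcal{(X,Y)}}(R)$ inherits the vanishing of $\Tor_{i}(Y,-)$ is correct. But the statement you have reduced everything to --- that $\mathcal{GF}_{\mathcal{(X,Y)}}(R)$ is closed under \emph{arbitrary} pure quotients --- is precisely the hard part, and the construction you sketch for it does not work. The Kaplansky property of ${}_{\mathcal{Y}}\widetilde{\mathcal{EX}}$ obtained in Lemma \ref{lem3.1} produces, for a given small subset of a complex $\mathbb{W}$ in the class, a small pure subcomplex of $\mathbb{W}$ lying in the class and \emph{containing} that subset; it gives no control over which degree-$0$ cycles that subcomplex acquires. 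So in your transfinite union you can arrange $A\subseteq \mathrm{Z}_{0}(\mathbb{A})$, but nothing forces $\mathrm{Z}_{0}(\mathbb{A})\subseteq A$: every time a stage is enlarged to restore exactness, $\mathcal{Y}\otimes_{R}$-exactness and purity, new cycles appear, and there is no mechanism keeping them inside $A$. Note what success would mean: if for every pure submodule $A\subseteq \mathrm{Z}_{0}(\mathbb{W})$ one could cut out a pure subcomplex $\mathbb{A}\in{}_{\mathcal{Y}}\widetilde{\mathcal{EX}}$ with $\mathrm{Z}_{0}(\mathbb{A})=A$, then every pure submodule and every pure quotient of a Gorenstein $(\mathcal{X},\mathcal{Y})$-flat module would be Gorenstein $(\mathcal{X},\mathcal{Y})$-flat. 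Under the hypotheses of this lemma no duality pair is assumed, so none of the character-module arguments (as in Proposition \ref{prop2.2}) are available, and such a closure statement is far stronger than anything established in the paper; it cannot be expected to fall out of a routine Kaplansky-style transfinite induction. This is a genuine gap, not a verification you have merely postponed.

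For comparison, the paper does not attempt this. Its proof first records that the hypotheses force $\mathcal{X}$ itself to be closed under direct limits (your parenthetical remark, which is correct), and then runs the argument of \cite[Lemma 3.1]{YL12-2}. The essential difference is that the Yang--Liu style argument never tries to realize the pure quotient inside the given resolution $\mathbb{W}$ of $B$: it works with presentations of the direct limit by direct sums of the $M_{i}$ (for instance $0\to\bigoplus_{n}M_{n}\to\bigoplus_{n}M_{n}\to\varinjlim_{n}M_{n}\to0$ in the case of countable chains), whose kernels are themselves built from the modules $M_{i}$ and hence known to be Gorenstein $(\mathcal{X},\mathcal{Y})$-flat, and then uses the closure under extensions together with the characterizations of Section 2 (Lemma \ref{lem2.3}, and the cokernel-type criterion of Proposition \ref{prop2.2}) to manufacture a \emph{new} $\mathcal{Y}\otimes_{R}$-exact $\mathcal{X}$-coresolution of the limit, rather than cutting one out of $\mathbb{W}$ by a purity argument. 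If you want to repair your write-up, that last step is what must be replaced.
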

\begin{proof}Since $\mathcal{X}$ is closed under direct sums, pure submodules and pure quotients, we have that $\mathcal{X}$ is closed under direct limits. It is obtained by argument similar to the proof of \cite[Lemma 3.1]{YL12-2}.
\end{proof}

\begin{proposition}Assume that $\mathcal{X}$ is closed under direct sums, pure submodules and pure quotients, and the class $\mathcal{GF}_{\mathcal{(X,Y)}}(R)$ is closed under extensions and direct products. Then the class $\mathcal{GF}_{\mathcal{(X,Y)}}(R)$ is preenveloping.
\end{proposition}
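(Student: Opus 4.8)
The plan is to apply the classical ``solution set plus direct product'' criterion for the existence of preenvelopes, using that $\mathcal{GF}_{\mathcal{(X,Y)}}(R)$ is a well-behaved Kaplansky class. Write $\mathcal{C}=\mathcal{GF}_{\mathcal{(X,Y)}}(R)$. Under the present hypotheses all the earlier results apply: by Lemma \ref{lem3.1} the class $\mathcal{C}$ is a $\kappa$-Kaplansky class for some regular cardinal $\kappa$; by Lemma \ref{lem3.2} it is closed under direct limits; and by assumption it is closed under direct products. My strategy is to manufacture, for each fixed module $M$, a single morphism into a member of $\mathcal{C}$ through which every morphism from $M$ into $\mathcal{C}$ factors, by first producing a bounded solution set and then taking a suitable product.

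The key step, which I expect to be the main obstacle, is the cardinal bound. Set $\lambda=\max\{\kappa,\mathrm{Card}(M)\}$. I would prove that for every $G\in\mathcal{C}$ and every homomorphism $f\colon M\to G$ there is a submodule $G'\subseteq G$ with $f(M)\subseteq G'$, $G'\in\mathcal{C}$, and $\mathrm{Card}(G')\le\lambda$. When $\mathrm{Card}(M)\le\kappa$ this is immediate from the Kaplansky property applied to the subset $S=f(M)$. In general I would first note that, applying the Kaplansky property to unions of generating sets, the $\le\kappa$-generated submodules of $G$ that lie in $\mathcal{C}$ form a directed family with union $G$; then I would build a continuous increasing chain inside $G$ that absorbs the $\le\lambda$ generators of $f(M)$ in blocks of size $\le\kappa$ via the Kaplansky condition, taking unions at limit stages. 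The delicate point is to guarantee that the resulting union again lies in $\mathcal{C}$ while keeping its cardinality $\le\lambda$; this is exactly where closure under direct limits (Lemma \ref{lem3.2}) is essential, and it is a standard but careful transfinite argument.

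Finally I would assemble the preenvelope. Up to isomorphism there is only a set $\{G_{i}\}_{i\in I}$ of modules in $\mathcal{C}$ with $\mathrm{Card}(G_{i})\le\lambda$, and for each $i$ the collection $\Hom_{R}(M,G_{i})$ is a set. Put $P=\prod_{i\in I}G_{i}^{\,\Hom_{R}(M,G_{i})}$; since $\mathcal{C}$ is closed under direct products, $P\in\mathcal{C}$. Let $\phi\colon M\to P$ be the canonical morphism whose component indexed by a pair $(i,h)$ is $h$. Given any $g\colon M\to G$ with $G\in\mathcal{C}$, the key step factors $g$ as $M\xrightarrow{g'}G'\hookrightarrow G$ with $G'\in\mathcal{C}$ and $\mathrm{Card}(G')\le\lambda$; fixing an isomorphism $G'\cong G_{i}$ exhibits $g'$ as one of the indexing morphisms, so $g$ factors through the corresponding projection of $\phi$. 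Hence $\phi$ is a $\mathcal{C}$-preenvelope, and therefore $\mathcal{GF}_{\mathcal{(X,Y)}}(R)$ is preenveloping.
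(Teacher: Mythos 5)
Your proposal is correct and takes essentially the same route as the paper: the paper's proof likewise combines Lemma \ref{lem3.1} (the class is Kaplansky) and Lemma \ref{lem3.2} (closure under direct limits) with closure under products, and then simply cites \cite[Theorem 2.5]{ER02}, whose proof is exactly the bounded-factorization-plus-product (solution set) argument you spell out. The one point your transfinite sketch glosses over is the successor step: since $F_\alpha$ may already have cardinality exceeding $\kappa$, you must apply the Kaplansky property inside the quotient $G/F_\alpha$ (kept in $\mathcal{GF}_{\mathcal{(X,Y)}}(R)$ inductively, using direct limits at limit ordinals) and then invoke the hypothesis that $\mathcal{GF}_{\mathcal{(X,Y)}}(R)$ is closed under extensions to conclude that the enlarged submodule $F_{\alpha+1}$ lies in the class.
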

\begin{proof}By Lemma \ref{lem3.1} and \ref{lem3.2}, we get that
$\mathcal{GF}_{\mathcal{(X,Y)}}(R)$ is a Kaplansky class, and is closed under direct limits. Then the result follows from \cite[Theorem 2.5]{ER02}.
\end{proof}

\begin{proposition}\label{prop3.1} Assume that $\mathcal{X}$ is closed under direct sums, pure submodules and pure quotients, and the class $\mathcal{GF}_{\mathcal{(X,Y)}}(R)$ is closed under extensions. Then $(\mathcal{GF}_{\mathcal{(X,Y)}}(R), \mathcal{GF}_{\mathcal{(X,Y)}}(R)^{\bot})$ is a perfect and hereditary cotorsion pair.
\end{proposition}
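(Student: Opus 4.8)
The plan is to recognise this proposition as essentially an assembly of the preceding lemmas: I would verify the four hypotheses of Lemma \ref{ER02} for the class $\mathcal{K}=\mathcal{GF}_{\mathcal{(X,Y)}}(R)$ to get the perfect cotorsion pair, and then read off heredity from Proposition \ref{prop2.1}. So the work is organisational rather than computational.

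First I would confirm that $\mathcal{GF}_{\mathcal{(X,Y)}}(R)$ contains all projective modules. This is immediate: the standing assumption gives $\mathcal{P}\subseteq\mathcal{X}$, and Remark \ref{rem2.1}(1) gives $\mathcal{X}\subseteq\mathcal{GF}_{\mathcal{(X,Y)}}(R)$, whence $\mathcal{P}\subseteq\mathcal{GF}_{\mathcal{(X,Y)}}(R)$. Next, since $\mathcal{X}$ is assumed closed under pure submodules and pure quotients, Lemma \ref{lem3.1} shows that $\mathcal{GF}_{\mathcal{(X,Y)}}(R)$ is a Kaplansky class. Closure under extensions is part of the hypothesis of the proposition, and closure under direct limits follows from Lemma \ref{lem3.2}, whose hypotheses—$\mathcal{X}$ closed under direct sums, pure submodules, and pure quotients, together with $\mathcal{GF}_{\mathcal{(X,Y)}}(R)$ closed under extensions—are precisely the ones assumed here. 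With the Kaplansky property, containment of the projectives, and closure under extensions and under direct limits all in hand, Lemma \ref{ER02} applies and yields that $(\mathcal{GF}_{\mathcal{(X,Y)}}(R),\,\mathcal{GF}_{\mathcal{(X,Y)}}(R)^{\bot})$ is a perfect cotorsion pair.

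Finally, for heredity I would invoke Proposition \ref{prop2.1}: because $\mathcal{GF}_{\mathcal{(X,Y)}}(R)$ is closed under extensions, it is projectively resolving, and by the definition recalled in the preliminaries a cotorsion pair whose left-hand class is projectively resolving is hereditary. I do not expect any genuine obstacle, since every ingredient has been prepared by the earlier lemmas; the only point demanding care is checking that the hypotheses of Lemma \ref{ER02} and Lemma \ref{lem3.2} are met verbatim by the assumptions of the proposition, which they are.
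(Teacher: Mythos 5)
Your proposal is correct and follows exactly the paper's own route: Lemmas \ref{ER02}, \ref{lem3.1}, and \ref{lem3.2} give the perfect cotorsion pair, and Proposition \ref{prop2.1} gives heredity. The only difference is that you spell out the containment $\mathcal{P}\subseteq\mathcal{X}\subseteq\mathcal{GF}_{\mathcal{(X,Y)}}(R)$ needed for Lemma \ref{ER02}, which the paper leaves implicit; this is a welcome clarification, not a deviation.
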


\begin{proof}According to Lammas \ref{ER02}, \ref{lem3.1}, and \ref{lem3.2}, we get that $(\mathcal{GF}_{\mathcal{(X,Y)}}(R), \mathcal{GF}_{\mathcal{(X,Y)}}(R)^{\bot})$ is a perfect cotorsion pair. Since the class $\mathcal{GF}_{\mathcal{(X,Y)}}(R)$ is projectively resolving by Proposition \ref{prop2.1}, we have that $(\mathcal{GF}_{\mathcal{(X,Y)}}(R), \mathcal{GF}_{\mathcal{(X,Y)}}(R)^{\bot})$ is hereditary.
\end{proof}

\begin{corollary} Assume that $\mathcal{X}$ is closed under direct sums, pure submodules and pure quotients, and the class $\mathcal{GF}_{\mathcal{(X,Y)}}(R)$ is closed under extensions. Then the class $\mathcal{GF}_{\mathcal{(X,Y)}}(R)$ is covering.
\end{corollary}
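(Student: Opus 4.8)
The plan is to deduce this corollary directly from the immediately preceding Proposition~\ref{prop3.1}, which under exactly the same hypotheses establishes that $(\mathcal{GF}_{\mathcal{(X,Y)}}(R), \mathcal{GF}_{\mathcal{(X,Y)}}(R)^{\bot})$ is a perfect cotorsion pair. By the definition of a perfect cotorsion pair recalled in the preliminaries, a cotorsion pair $(\mathcal{A},\mathcal{B})$ is perfect precisely when every module has an $\mathcal{A}$-cover and a $\mathcal{B}$-envelope. Applying this with $\mathcal{A} = \mathcal{GF}_{\mathcal{(X,Y)}}(R)$, the perfectness of the pair yields that every left $R$-module admits a $\mathcal{GF}_{\mathcal{(X,Y)}}(R)$-cover, which is exactly the assertion that the class $\mathcal{GF}_{\mathcal{(X,Y)}}(R)$ is covering.

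Concretely, the first step is to invoke Proposition~\ref{prop3.1} to obtain that the pair $(\mathcal{GF}_{\mathcal{(X,Y)}}(R), \mathcal{GF}_{\mathcal{(X,Y)}}(R)^{\bot})$ is perfect. The second and final step is simply to read off the ``covering'' half of the definition of a perfect cotorsion pair: since every module has a cover by an object of the left-hand class, the left-hand class $\mathcal{GF}_{\mathcal{(X,Y)}}(R)$ is a covering class by definition. No pushout or pullback diagrams, Snake-lemma manipulations, or character-module computations are needed here; all the real work has already been carried out in establishing the cotorsion pair and its perfectness.

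There is essentially no obstacle in this corollary itself, as it is a one-line consequence of the preceding proposition together with the definitions. The only point worth flagging is that all the genuine difficulty has been front-loaded into Proposition~\ref{prop3.1} and the lemmas feeding it: verifying that $\mathcal{GF}_{\mathcal{(X,Y)}}(R)$ is a Kaplansky class (Lemma~\ref{lem3.1}), closed under direct limits (Lemma~\ref{lem3.2}), contains the projectives, and is closed under extensions, so that Lemma~\ref{ER02} (the Eklof--Trlifaj/Enochs--Estrada criterion) applies to produce a perfect cotorsion pair. Granting those results, as we are entitled to do, the passage to ``covering'' is immediate.

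\begin{proof}
By Proposition~\ref{prop3.1}, under the stated hypotheses the pair $(\mathcal{GF}_{\mathcal{(X,Y)}}(R), \mathcal{GF}_{\mathcal{(X,Y)}}(R)^{\bot})$ is a perfect cotorsion pair. By the definition of a perfect cotorsion pair, every left $R$-module has a $\mathcal{GF}_{\mathcal{(X,Y)}}(R)$-cover. Hence the class $\mathcal{GF}_{\mathcal{(X,Y)}}(R)$ is covering.
\end{proof}
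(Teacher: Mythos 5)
Your proof is correct and matches the paper's intended argument: the paper states this corollary immediately after Proposition~\ref{prop3.1} with no written proof, precisely because it is the one-line consequence you give — perfectness of the cotorsion pair $(\mathcal{GF}_{\mathcal{(X,Y)}}(R), \mathcal{GF}_{\mathcal{(X,Y)}}(R)^{\bot})$ means, by definition, that every module has a $\mathcal{GF}_{\mathcal{(X,Y)}}(R)$-cover.
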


When $(\mathcal{X},\mathcal{Y})$ is a perfect and symmetric duality pair, and $\mathcal{X}$ is closed under kernels of epimorphisms. Then, by Lemmas \ref{lem1.1} and \ref{lem1.2}, we get that ($\mathcal{X}$, $\mathcal{X}^{\bot}$) is a complete (perfect) and hereditary cotorsion pair, and $\mathcal{X}$ is closed under direct sums, pure submodules and pure quotients. By Corollary \ref{cor2.5}, we have that the class $\mathcal{GF}_{\mathcal{(X,Y)}}(R)$ is closed under extensions. It is clear that $\mathcal{X}\subseteq\mathcal{GF}_{\mathcal{(X,Y)}}(R)$ and $\mathcal{GF}_{\mathcal{(X,Y)}}(R)^{\bot}\subseteq\mathcal{X}^{\bot}$. The desired Hovey triple shall be a consequence of the following result.

\begin{proposition}\label{prop3.3} Assume that $(\mathcal{X},\mathcal{Y})$ is a perfect and symmetric duality pair, and $\mathcal{X}$ is closed under kernels of epimorphisms. Then the cotorsion pairs ($\mathcal{X}$, $\mathcal{X}^{\bot}$) and $(\mathcal{GF}_{\mathcal{(X,Y)}}(R), \mathcal{GF}_{\mathcal{(X,Y)}}(R)^{\bot})$ have the same core. That is, $\mathcal{GF}_{\mathcal{(X,Y)}}(R)\cap\mathcal{GF}_{\mathcal{(X,Y)}}(R)^{\bot}
=\mathcal{X}\cap\mathcal{X}^{\bot}$.
\end{proposition}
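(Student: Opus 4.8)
The goal is to establish the set equality $\mathcal{GF}_{\mathcal{(X,Y)}}(R)\cap\mathcal{GF}_{\mathcal{(X,Y)}}(R)^{\bot}=\mathcal{X}\cap\mathcal{X}^{\bot}$ under the standing hypotheses that $(\mathcal{X},\mathcal{Y})$ is a perfect and symmetric duality pair and $\mathcal{X}$ is closed under kernels of epimorphisms. I plan to prove the two inclusions separately. Throughout I will freely use the facts already assembled in the excerpt: by Lemmas \ref{lem1.1} and \ref{lem1.2} the pair $(\mathcal{X},\mathcal{X}^{\bot})$ is a perfect hereditary cotorsion pair with $\mathcal{X}$ closed under pure submodules and pure quotients; by Corollary \ref{cor2.5} the class $\mathcal{GF}_{\mathcal{(X,Y)}}(R)$ is closed under extensions; and by the remark following Corollary \ref{cor2.5}, $\mathcal{X}\cap\mathcal{X}^{\perp}$ is a cogenerator for $\mathcal{GF}_{\mathcal{(X,Y)}}(R)$ with $\Ext_{R}^{i}(G,W)=0$ for every $G\in\mathcal{GF}_{\mathcal{(X,Y)}}(R)$ and $W\in\mathcal{X}\cap\mathcal{X}^{\perp}$.

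The inclusion $\mathcal{X}\cap\mathcal{X}^{\bot}\subseteq\mathcal{GF}_{\mathcal{(X,Y)}}(R)\cap\mathcal{GF}_{\mathcal{(X,Y)}}(R)^{\bot}$ is the easier one. If $W\in\mathcal{X}\cap\mathcal{X}^{\perp}$, then $W\in\mathcal{X}\subseteq\mathcal{GF}_{\mathcal{(X,Y)}}(R)$ by Remark \ref{rem2.1}(1). To see $W\in\mathcal{GF}_{\mathcal{(X,Y)}}(R)^{\bot}$, I must check $\Ext^{1}_{R}(G,W)=0$ for every $G\in\mathcal{GF}_{\mathcal{(X,Y)}}(R)$; this is exactly the orthogonality $\Ext_{R}^{i}(G,W)=0$ recorded in the remark above (take $i=1$). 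Hence $W$ lies in both classes, giving this inclusion immediately.

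For the reverse inclusion $\mathcal{GF}_{\mathcal{(X,Y)}}(R)\cap\mathcal{GF}_{\mathcal{(X,Y)}}(R)^{\bot}\subseteq\mathcal{X}\cap\mathcal{X}^{\bot}$, take $M\in\mathcal{GF}_{\mathcal{(X,Y)}}(R)\cap\mathcal{GF}_{\mathcal{(X,Y)}}(R)^{\bot}$. I first show $M\in\mathcal{X}$. Since $M\in\mathcal{GF}_{\mathcal{(X,Y)}}(R)$, Proposition \ref{prop2.3}(5) furnishes a short exact sequence $0\rightarrow M\rightarrow W\rightarrow G\rightarrow 0$ with $W\in\mathcal{X}\cap\mathcal{X}^{\perp}$ and $G\in\mathcal{GF}_{\mathcal{(X,Y)}}(R)$. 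Because $M\in\mathcal{GF}_{\mathcal{(X,Y)}}(R)^{\bot}$ and $G$ is Gorenstein $(\mathcal{X},\mathcal{Y})$-flat, we have $\Ext^{1}_{R}(G,M)=0$, so this sequence splits and $M$ is a direct summand of $W\in\mathcal{X}$. As $(\mathcal{X},\mathcal{X}^{\perp})$ is a cotorsion pair, $\mathcal{X}$ is closed under direct summands, whence $M\in\mathcal{X}$. It then remains to show $M\in\mathcal{X}^{\perp}$, i.e. $\Ext^{1}_{R}(X,M)=0$ for all $X\in\mathcal{X}$; since $\mathcal{X}\subseteq\mathcal{GF}_{\mathcal{(X,Y)}}(R)$ and $M\in\mathcal{GF}_{\mathcal{(X,Y)}}(R)^{\bot}$, this orthogonality is inherited directly, giving $M\in\mathcal{X}\cap\mathcal{X}^{\perp}$.

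The only step requiring care is the splitting argument in the reverse inclusion: its whole point is to upgrade membership in $\mathcal{GF}_{\mathcal{(X,Y)}}(R)$ to membership in $\mathcal{X}$, and it hinges on having a short exact sequence whose cokernel is Gorenstein $(\mathcal{X},\mathcal{Y})$-flat so that the $\Ext^{1}$-vanishing from $M\in\mathcal{GF}_{\mathcal{(X,Y)}}(R)^{\bot}$ can be applied. This is precisely what Proposition \ref{prop2.3}(5) provides, so the main obstacle is really just recognizing which of the earlier structural results to invoke; once the sequence $0\rightarrow M\rightarrow W\rightarrow G\rightarrow 0$ is in hand, everything reduces to the closure of cotorsion-pair classes under summands and the two inherited orthogonality relations, all of which are routine.
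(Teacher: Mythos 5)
Your proof is correct, and its overall shape (two inclusions, with a splitting argument supplying membership in $\mathcal{X}$) matches the paper's; the difference lies in how the inclusion $\mathcal{X}\cap\mathcal{X}^{\bot}\subseteq\mathcal{GF}_{\mathcal{(X,Y)}}(R)^{\bot}$ is justified. For the containment $\mathcal{GF}_{\mathcal{(X,Y)}}(R)\cap\mathcal{GF}_{\mathcal{(X,Y)}}(R)^{\bot}\subseteq\mathcal{X}\cap\mathcal{X}^{\bot}$ you and the paper argue essentially identically: the paper splits a sequence $0\rightarrow W\rightarrow X\rightarrow W'\rightarrow 0$ with $X\in\mathcal{X}$ and $W'\in\mathcal{GF}_{\mathcal{(X,Y)}}(R)$ coming from Lemma \ref{lem2.3}(3), while you split the sequence $0\rightarrow M\rightarrow W\rightarrow G\rightarrow 0$ with $W\in\mathcal{X}\cap\mathcal{X}^{\perp}$ coming from Proposition \ref{prop2.3}(5); both then use closure of $\mathcal{X}$ under direct summands and the trivial inclusion $\mathcal{GF}_{\mathcal{(X,Y)}}(R)^{\bot}\subseteq\mathcal{X}^{\bot}$. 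The genuine divergence is the other inclusion: the paper proves it from scratch, showing for $U\in\mathcal{X}\cap\mathcal{X}^{\bot}$ that $\Hom_{R}(\mathbb{X},U)$ is exact for every complex $\mathbb{X}\in{}_{\mathcal{Y}}\mathcal{\widetilde{EX}}$ via the isomorphism $\Hom_{R}(\mathbb{X},U^{++})\cong(U^{+}\otimes_{R}\mathbb{X})^{+}$, the pure exact sequence $0\rightarrow U\rightarrow U^{++}\rightarrow U^{++}/U\rightarrow 0$, closure of $\mathcal{X}$ under pure quotients, and the splitting forced by $U\in\mathcal{X}^{\bot}$, whereas you bypass all of this by citing Proposition \ref{prop2.3}(3) (equivalently the remark following Corollary \ref{cor2.5}), which already records $\Ext_{R}^{i}(G,W)=0$ for all $G\in\mathcal{GF}_{\mathcal{(X,Y)}}(R)$ and $W\in\mathcal{X}\cap\mathcal{X}^{\perp}$ under precisely the hypotheses of the present proposition. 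Your citation is legitimate since the hypotheses match, and it exhibits Proposition \ref{prop3.3} as a purely formal consequence of Proposition \ref{prop2.3}; what the paper's self-contained route buys is visibility of where symmetry of the duality pair and purity actually enter (it is in effect a rerun of the argument for (1)$\Rightarrow$(2) of Proposition \ref{prop2.3}), rather than an appeal to a remark whose own justification is only sketched.
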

\begin{proof}($\subseteq$) Suppose $W\in \mathcal{GF}_{\mathcal{(X,Y)}}(R)\cap\mathcal{GF}_{\mathcal{(X,Y)}}(R)^{\bot}$. Since $\mathcal{GF}_{\mathcal{(X,Y)}}(R)^{\bot}\subseteq\mathcal{X}^{\bot}$, we only have to show that $W$ is in $\mathcal{X}$. Since $W$ is in $\mathcal{GF}_{\mathcal{(X,Y)}}(R)$, we have a short exact sequence $0\rightarrow W\rightarrow X\rightarrow W'\rightarrow 0$, where $X$ is in $\mathcal{X}$ and $W'$ is in $\mathcal{GF}_{\mathcal{(X,Y)}}(R)$. According to $W\in \mathcal{GF}_{\mathcal{(X,Y)}}(R)^{\bot}$, we get that $\Ext_{R}^{1}(W', W)=0$, and so the above sequence splits. Therefore, $W$ is a direct summand of $X$, which implies $W\in \mathcal{X}$.

($\supseteq$) Let $U\in \mathcal{X}\cap\mathcal{X}^{\bot}$. Then, it is clear that $U$ is in $\mathcal{GF}_{\mathcal{(X,Y)}}(R)$. It needs to prove that $U\in \mathcal{GF}_{\mathcal{(X,Y)}}(R)^{\bot}$. First, we show that the sequence $\Hom_{R}(\mathbb{X}, U)$ is exact for any $\mathbb{X}\in{_{\mathcal{Y}}\widetilde{\mathcal{EX}}}$. Note that $\Hom_{R}(\mathbb{X}, U^{++})\cong(U^{+}\otimes_{R}\mathbb{X})^{+}$. Since $U\in \mathcal{X}$, we get that $U^{+}\in \mathcal{Y}$, and so $U^{+}\otimes_{R}\mathbb{X}$ is exact. Hence, $\Hom_{R}(\mathbb{X}, U^{++})$ is exact. From \cite[Proposition 5.3.9]{EJ00}, there exists a pure exact sequence $0\rightarrow U\rightarrow U^{++}\rightarrow U^{++}/U\rightarrow 0$.
Since $\mathcal{X}$ is closed under pure quotients, we get that $U^{++}/U\in \mathcal{X}$. By the assumption that $U$ is also in $\mathcal{X}^{\bot}$, which means that the above sequence splits. Thus $\Hom_{R}(\mathbb{X}, U)$ is exact. Now we can prove by the standard argument that $\Ext_{R}^{1}(G, U)=0$ for all $G \in \mathcal{GF}_{\mathcal{(X,Y)}}(R)$.
\end{proof}

Combine Proposition \ref{prop3.3} and Theorem \ref{the3.1}, we have a Hovey triple ($\mathcal{GF}_{\mathcal{(X,Y)}}(R), \mathcal{W}, \mathcal{X}^{\bot}$) and the following result.

\begin{theorem}\label{the3.9} Assume that $(\mathcal{X},\mathcal{Y})$ is a perfect and symmetric duality pair, and $\mathcal{X}$ is closed under kernels of epimorphisms. Then the category $R$-Mod of left $R$-modules has a hereditary abelian model structure, the Gorenstein flat model structure with respect to a duality pair $(\mathcal{X}, \mathcal{Y})$, as follows:

 The cofibrant objects coincide with the clas $\mathcal{GF}_{\mathcal{(X,Y)}}(R)$.

 The fibrant objects coincide with the class $\mathcal{X}^{\bot}$.

 The trivially cofibrant objects coincide with the class $\mathcal{X}$.

 The trivially fibrant objects coincide with the class $\mathcal{GF}_{\mathcal{(X,Y)}}(R)^{\bot}$.\\
 An $R$-module $M$ fits into a short exact sequence $0\rightarrow G\rightarrow X\rightarrow M\rightarrow 0$ with $X\in \mathcal{X}$ and $G\in \mathcal{GF}_{\mathcal{(X,Y)}}(R)^{\bot}$ if and only if it fits into a short exact sequence $0\rightarrow M\rightarrow X'\rightarrow G'\rightarrow 0$ with $X'\in \mathcal{X}$ and $G'\in \mathcal{GF}_{\mathcal{(X,Y)}}(R)^{\bot}$. All moudles $M$ with this property are precisely the trivial objects of the above abelian model structure.

\end{theorem}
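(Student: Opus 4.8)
The plan is to feed the two cotorsion pairs
\[
(\tilde{\mathcal{Q}},\mathcal{R})=(\mathcal{X},\mathcal{X}^{\bot}),\qquad (\mathcal{Q},\tilde{\mathcal{R}})=(\mathcal{GF}_{\mathcal{(X,Y)}}(R),\mathcal{GF}_{\mathcal{(X,Y)}}(R)^{\bot})
\]
into Gillespie's machine, Theorem \ref{the3.1}, so that the Hovey triple it produces is exactly $(\mathcal{GF}_{\mathcal{(X,Y)}}(R),\mathcal{W},\mathcal{X}^{\bot})$, with trivially cofibrant class $\tilde{\mathcal{Q}}=\mathcal{X}$ and trivially fibrant class $\tilde{\mathcal{R}}=\mathcal{GF}_{\mathcal{(X,Y)}}(R)^{\bot}$. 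Since the hard analytic input has been isolated in the earlier results, the proof is in essence a verification that the hypotheses of Theorem \ref{the3.1} hold for this choice, followed by an identification of the resulting classes.

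First I would check that both pairs are complete and hereditary. For $(\mathcal{X},\mathcal{X}^{\bot})$, Lemma \ref{lem1.1}(2) gives a perfect, hence complete, cotorsion pair; and since $\mathcal{X}$ contains the projectives (Lemma \ref{lem1.2}), is closed under extensions (perfectness of the duality pair) and under kernels of epimorphisms (hypothesis), $\mathcal{X}$ is projectively resolving and the pair is hereditary. For $(\mathcal{GF}_{\mathcal{(X,Y)}}(R),\mathcal{GF}_{\mathcal{(X,Y)}}(R)^{\bot})$, the remarks preceding the theorem record that $\mathcal{X}$ is closed under direct sums, pure submodules and pure quotients (Lemmas \ref{lem1.1} and \ref{lem1.2}) and that $\mathcal{GF}_{\mathcal{(X,Y)}}(R)$ is closed under extensions (Corollary \ref{cor2.5}); hence Proposition \ref{prop3.1} applies and yields a perfect and hereditary cotorsion pair.

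Next I would verify the two numbered conditions of Theorem \ref{the3.1}. Condition (1), $\tilde{\mathcal{R}}\subseteq\mathcal{R}$ and $\tilde{\mathcal{Q}}\subseteq\mathcal{Q}$, is immediate: the inclusion $\mathcal{X}\subseteq\mathcal{GF}_{\mathcal{(X,Y)}}(R)$ of Remark \ref{rem2.1}(1) is the second containment and, upon taking right Ext-orthogonals, gives $\mathcal{GF}_{\mathcal{(X,Y)}}(R)^{\bot}\subseteq\mathcal{X}^{\bot}$, the first. Condition (2), $\tilde{\mathcal{Q}}\cap\mathcal{R}=\mathcal{Q}\cap\tilde{\mathcal{R}}$, unwinds to $\mathcal{X}\cap\mathcal{X}^{\bot}=\mathcal{GF}_{\mathcal{(X,Y)}}(R)\cap\mathcal{GF}_{\mathcal{(X,Y)}}(R)^{\bot}$, which is precisely the ``same core'' statement of Proposition \ref{prop3.3}; this is where the genuine content of the theorem sits, and it has already been established. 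With (1) and (2) in hand, Theorem \ref{the3.1} delivers the hereditary Hovey triple and identifies the cofibrant, fibrant, trivially cofibrant and trivially fibrant classes as claimed.

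It remains to describe the trivial class $\mathcal{W}$. Here I would read off the two presentations of the thick class furnished by Theorem \ref{the3.1}, substituting $\tilde{\mathcal{R}}=\mathcal{GF}_{\mathcal{(X,Y)}}(R)^{\bot}$ and $\tilde{\mathcal{Q}}=\mathcal{X}$; combined with completeness of the two cotorsion pairs and thickness of $\mathcal{W}$ (using $\mathcal{X},\mathcal{GF}_{\mathcal{(X,Y)}}(R)^{\bot}\subseteq\mathcal{W}$ together with closure of $\mathcal{W}$ under kernels of epimorphisms and cokernels of monomorphisms), these yield the short exact sequences relating $M$ to the classes $\mathcal{X}$ and $\mathcal{GF}_{\mathcal{(X,Y)}}(R)^{\bot}$ recorded in the statement, and the uniqueness clause of Theorem \ref{the3.1} guarantees that $\mathcal{W}$ is the unique thick class completing the triple. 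I expect no serious obstacle in this final assembly; the single point demanding care is matching the exact position of $\mathcal{X}$ and $\mathcal{GF}_{\mathcal{(X,Y)}}(R)^{\bot}$ as sub-, middle, or quotient term in each short exact sequence, since the conceptual weight of the argument has already been discharged in Propositions \ref{prop3.1} and \ref{prop3.3}.
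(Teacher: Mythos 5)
Your construction of the Hovey triple is exactly the paper's own route: the paper likewise feeds the pairs $(\mathcal{X},\mathcal{X}^{\bot})$ (complete and hereditary by Lemmas \ref{lem1.1}, \ref{lem1.2} together with the hypothesis on kernels of epimorphisms) and $(\mathcal{GF}_{\mathcal{(X,Y)}}(R),\mathcal{GF}_{\mathcal{(X,Y)}}(R)^{\bot})$ (Proposition \ref{prop3.1}, whose hypotheses are supplied by Lemma \ref{lem1.1} and Corollary \ref{cor2.5}) into Theorem \ref{the3.1}, checking condition (1) via $\mathcal{X}\subseteq\mathcal{GF}_{\mathcal{(X,Y)}}(R)$ and condition (2) via Proposition \ref{prop3.3}. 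Up to that point your write-up is correct, and in fact more explicit than the paper's one-line assembly ``combine Proposition \ref{prop3.3} and Theorem \ref{the3.1}.''

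The problem sits exactly at the point you set aside as routine. Substituting into Theorem \ref{the3.1} puts the trivially fibrant class in the \emph{middle} of both characterizing sequences: $M\in\mathcal{W}$ iff there is $0\rightarrow M\rightarrow G\rightarrow X\rightarrow 0$, iff there is $0\rightarrow X'\rightarrow G'\rightarrow M\rightarrow 0$, with $G,G'\in\mathcal{GF}_{\mathcal{(X,Y)}}(R)^{\bot}$ and $X,X'\in\mathcal{X}$. The statement to be proved instead places $\mathcal{X}$ in the middle of both. One of its sequences is indeed recoverable by the completeness-plus-thickness argument you gesture at: if $M\in\mathcal{W}$, completeness of $(\mathcal{X},\mathcal{X}^{\bot})$ gives $0\rightarrow B\rightarrow X\rightarrow M\rightarrow 0$ with $X\in\mathcal{X}$ and $B\in\mathcal{X}^{\bot}$, and thickness forces $B\in\mathcal{W}\cap\mathcal{X}^{\bot}=\mathcal{GF}_{\mathcal{(X,Y)}}(R)^{\bot}$. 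But the other sequence, $0\rightarrow M\rightarrow X'\rightarrow G'\rightarrow 0$ with $X'\in\mathcal{X}$, cannot be derived, because it is false in general. Take $R=\mathbb{Z}$, $\mathcal{X}$ the flat modules, $\mathcal{Y}$ the injective modules, and $M=\mathbb{Z}/2\mathbb{Z}$: multiplication by $2$ on the ring of $2$-adic integers gives a short exact sequence whose subobject and middle term are flat and cotorsion, hence lie in $\mathcal{X}\cap\mathcal{X}^{\bot}\subseteq\mathcal{GF}_{\mathcal{(X,Y)}}(R)^{\bot}$ by Proposition \ref{prop3.3}, so thickness puts $\mathbb{Z}/2\mathbb{Z}$ in $\mathcal{W}$; yet $\mathbb{Z}/2\mathbb{Z}$ embeds in no flat ($=$ torsion-free) $\mathbb{Z}$-module whatsoever. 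So the displayed ``if and only if'' in the statement fails as written; this is a transcription error in the paper (in Gillespie's original result the class $\mathcal{GF}^{\bot}$ occupies the middle term of both sequences), not a gap your completeness/thickness argument could close. Your proof should either quote the description of $\mathcal{W}$ exactly as Theorem \ref{the3.1} gives it, or prove the corrected mixed form $0\rightarrow G\rightarrow X\rightarrow M\rightarrow 0$ above, noting explicitly that the positions cannot be swapped in the remaining sequence.
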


Applying Theorem \ref{the3.9} to Example \ref{ex2.1}(1), we have  \cite[Theorem 3.3]{Gil17}.

%\vspace{0.3cm} \hspace{-0.8cm}{\bf{Acknowledgement}}

%The authors would like to express their sincere thanks to the referee for his/her helpful  suggestions and comments, which have greatly improved the paper.

\def\refname{\hfil
References}

\end{document}